\documentclass[11pt,a4paper]{amsart}
\usepackage[utf8]{inputenc}
\usepackage[english]{babel}
\usepackage[T1]{fontenc}
\usepackage{amsmath}
\usepackage{amsthm}
\usepackage{amsfonts}
\usepackage{amssymb}
\usepackage{lmodern}
\usepackage{mathrsfs}
\usepackage{physics}
\usepackage[colorlinks=true,citecolor=blue,linkcolor=black]{hyperref}
\usepackage{graphicx}
\usepackage[left=2.5cm,right=2.5cm,top=2.5cm,bottom=2.5cm]{geometry}
\usepackage{enumerate}
\usepackage{tikz-cd}
\usepackage{bm}
\usetikzlibrary{backgrounds}
\usetikzlibrary{calc}
\usetikzlibrary{hobby}
\usetikzlibrary{decorations.markings}
\usepackage{caption}
\usepackage{subcaption}
\usepackage{array}
\usepackage{float}


\numberwithin{equation}{section}

\DeclareMathOperator{\diag}{diag}
\DeclareMathOperator{\ram}{ram}
\DeclareMathOperator{\slope}{slope}
\DeclareMathOperator{\Hom}{Hom}
\DeclareMathOperator{\End}{End}

\DeclareMathOperator{\Sto}{\mathbb{S}to}
\DeclareMathOperator{\Card}{Card}
\DeclareMathOperator{\Id}{Id}

\DeclareMathOperator{\Aut}{Aut}
\DeclareMathOperator{\Irr}{Irr}
\DeclareMathOperator{\GrAut}{GrAut}
\DeclareMathOperator{\lcm}{lcm}
\DeclareMathOperator{\SL}{SL}

\DeclareMathOperator{\GL}{GL}

\newcommand{\N}{\mathbb{N}}
\newcommand{\Z}{\mathbb{Z}}

\newcommand{\C}{\mathbb{C}}
\newcommand{\Pbb}{\mathbb{P}}
\newcommand{\A}{\mathbb{A}}
\newcommand{\Lbb}{\mathbb{L}}

\newcommand{\Ical}{\mathcal{I}}

\newcommand{\Lcal}{\mathcal{L}}

\newcommand{\Acal}{\mathcal{A}}
\newcommand{\Ccal}{\mathcal{C}}

\newcommand{\MDR}{\mathcal M_{\mathrm{DR}}}
\newcommand{\MB}{\mathcal M_{\mathrm{B}}}
\newcommand{\MDol}{\mathcal M_{\mathrm{Dol}}}

\newcommand{\cir}[1]{\langle #1 \rangle}
\newcommand{\sslash}{\mathbin{/\mkern-6mu/}}

\begin{document}


\renewcommand{\proofname}{Proof}
\renewcommand{\Re}{\operatorname{Re}}
\renewcommand{\Im}{\operatorname{Im}}
\renewcommand{\labelitemi}{$\bullet$}

\newtheorem{theorem}{Theorem}[section]
\newtheorem{proposition}[theorem]{Proposition}
\newtheorem{lemma}[theorem]{Lemma}
\newtheorem{corollary}[theorem]{Corollary}

\theoremstyle{definition}
\newtheorem{definition}[theorem]{Definition}
\newtheorem{example}{Example}[section]
\theoremstyle{remark}
\newtheorem{remark}{Remark}[section]
\newtheorem*{claim}{Claim}

\title{Diagrams and irregular connections on the Riemann sphere}

\author{Jean Douçot}
\address{University of Lisbon, Group of mathematical physics, Lisboa, Portugal}
\email{jmdoucot@fc.ul.pt}

\begin{abstract}
We define a diagram associated to any algebraic connection on a vector bundle on a Zariski open subset of the Riemann sphere, extending the definition of Boalch--Yamakawa to the general case featuring several irregular singularities, possibly ramified.  
We prove that the diagram is invariant under the symplectic automorphisms of the Weyl algebra, encompassing the Fourier--Laplace transform. 
As an application, we establish several new cases of
the observation that different Lax representations of a given Painlevé-type equation 
may be read off directly from the diagram, 
corresponding to connections with different formal data, 
usually on different rank bundles. 
\end{abstract}

\maketitle

\section{Introduction}

This work takes place in line with a series of studies by several authors establishing links between symplectic moduli spaces of meromorphic connections on the Riemann sphere and graphs (or doubled quivers). Our main horizon, as outlined in  
\cite{boalch2018wild},
is to develop a theory of Dynkin-like diagrams for the (wild) nonabelian Hodge spaces of Riemann surfaces, in the perspective of their classification.\\

First recall that the study of moduli spaces in 2d gauge theory can be traced back Riemann's definition of the {\em monodromy representation} of a linear differential equation. This leads to a purely topological description of a special class of algebraic differential equations, the linear connections with regular singularities. In turn this leads to the definition of the character variety or Betti moduli space:
$$\MB = \Hom(\pi_1(\Sigma^\circ),G)/G$$
of any smooth complex algebraic curve $\Sigma^\circ$.
Here $G=\GL_n(\mathbb C)$, and the points of $\MB$ correspond to 
(polystable) linear connections with regular singularities, 
on rank $n$ algebraic vector bundles on $\Sigma^\circ$, as in  \cite{deligne1970equations} for example.

For applications to nonlinear differential equations (such as the Painlev\'e equations) the most interesting case is 
when $\Sigma^\circ$ has genus zero.
In that case one can define an additive moduli space $\MDR^*$
whose points correspond to isomorphism classes of Fuchsian systems
$$d-Adz,\qquad A=\frac{A_1}{z-a_1}+\cdots+\frac{A_m}{z-a_m}, \quad A_i\in \End(\mathbb C^n)$$
so that that the Riemann--Hilbert map, taking the monodromy representation,  is a holomorphic map
$$\nu:\MDR^*\ \to \ \MB$$
between two algebraic varieties of the same dimension.
In brief $\nu$ is induced from a $G$-equivariant map from the (additive) space of coefficients $\{A_i\}\cong \End(\mathbb C^n)^m$ to the (multiplicative) space of monodromy data $\cong \GL_n(\mathbb C)^m$.
If we fix the adjoint orbits of the residues or 
conjugacy classes of
the local monodromies then one obtains symplectic moduli spaces.

The simplest case is to take rank $n=2$ with $m=3$ poles at finite distance, and then the symplectic moduli spaces have complex dimension two and are phase spaces for the Painlev\'e VI differential equation.
This case was famously related to the affine Dynkin diagram of type 
$D_4$ by Okamoto \cite{okamoto1986studies, okamoto1992painleve} 
who showed that the Painlev\'e VI 
equations admit a symmetry group isomorphic to
the (extended) affine Weyl group of type $\widehat D_4$.

This relation between graphs and connections was better understood
and generalised  by 
Crawley-Boevey \cite{crawley2003matrices,crawley2006multiplicative} who showed 
(building on work of Nakajima, Kronheimer and Kraft--Procesi)  how to construct such spaces directly from graphs/doubled quivers.
In effect they showed that if $n=2, m=3$ then:

$\bullet$ 1) the symplectic additive moduli spaces $\MDR^*$ of Fuchsian systems are isomorphic to  the Nakajima quiver varieties of the doubled quiver $\widehat D_4$, and

$\bullet$ 2) one can define a notion of ``multiplicative quiver variety'' so that the  symplectic character varieties $\MB$ are examples of multiplicative quiver varieties attached to $\widehat D_4$.

Moreover they did this for all $n,m$ not just $n=2,m=3$, thus defining a graph for any Fuchsian system.
The graphs that occur are the star-shaped graphs, consisting of 
$m+1$ legs (type $A$ Dynkin graphs) glued 
to a single node at one end.
They then used the Kac--Moody root system attached to the graph 
in the study of the corresponding moduli spaces (the tame Deligne--Simpson problems).

In particular this suggests a better way to parameterise the 
choice of data needed to determine the spaces: 
rather than choose $\Sigma^\circ$ and the local conjugacy classes to get a space, one now chooses a star-shaped graph and some data on the graph (a dimension vector $\mathbf d$ and a scalar at each node).
For example the complex dimension of the quiver variety is given by the formula
\begin{equation} 
2-(\mathbf d,\mathbf d)
\end{equation}
where $(\,\cdot\,,\,\cdot\,)$ is the bilinear form determined 
by the Kac--Moody Cartan matrix of the graph. 
This is useful as the same space occurs in several different ways as a moduli space of connections, that can be read off from the graph 
(for example see \cite{boalch2012simply} \S11.1 for three readings of $\widehat D_4$,  of ranks $2,3,5$).
It also explains the link between affine Dynkin diagrams (which have null roots $(\mathbf d,\mathbf d)=0$) and moduli spaces of dimension two, leading to the idea (\cite{boalch2008irregular} p.12) that the next simplest classes of examples come from hyperbolic diagrams.

In turn this leads to the idea (\cite{boalch2012simply, boalch2018wild}) 
that we should be  thinking  of 
$\MB$ as an abstract space, a global analogue of a Lie group, 
and then study the representations of it, and that the graph plays the role of Dynkin diagram (to parameterise the spaces and their representations).\\

This story has been significantly deepened in recent years, to encompass many more moduli spaces, based
on the following developments (amongst others):

$\bullet$ a) any algebraic connection over $\Sigma^\circ$ has a purely topological description, even if it has irregular singularities, 
via the Riemann--Hilbert--Birkhoff correspondence.
This originates with Stokes and Birkhoff and was extended to full generality by Sibuya \cite{sibuya1977stokes} and Malgrange \cite{malgrange1979remarques}, and then rephrased in various more convenient/intrinsic  ways by Jurkat, Deligne, Martinet--Ramis, Loday-Richaud, Boalch 
(see the overview in \cite{boalch2021topology}).
For example the surface group 
$\pi_1(\Sigma^\circ)$ can be generalised to the wild surface group,
yielding explicit presentations.

$\bullet$ b) The Stokes data in a) can be used to build generalisations of the character varieties, the wild 
character varieties. For any rank $n$ 
they were constructed algebraically, in full generality, 
in the sequence of works
\cite{boalch2007quasi, boalch2009through, boalch2014geometry, boalch2015twisted} and they come in two flavours: 
the Poisson wild character varieties 
$\MB(\bm \Sigma)$ and the symplectic 
wild character varieties $$\MB(\bm \Sigma,\bm{\mathcal C})\subset \MB(\bm\Sigma).$$
This gives  a purely algebraic approach to  Boalch's 
holomorphic symplectic manifolds  first constructed analytically (``irregular Atiyah--Bott'') \cite{boalch2001symplectic}.
The choices $\bm\Sigma,\bm\Ccal$ here will be made precise below: in brief 
we fix some boundary data consisting of some Stokes circles and 
formal monodromy conjugacy classes at each marked point.

$\bullet$ c) The nonabelian Hodge correspondence was 
extended \cite{biquard2004wild} to incorporate these new symplectic wild character varieties, showing they enjoy the richer property of being  new examples of (complete) hyperk\"ahler manifolds 
and that there are 
diffeomorphisms 
\begin{equation}\label{non_abelian_hodge_space}
\MDol\cong \MDR\xrightarrow{\nu}\MB
\end{equation}
with moduli spaces of meromorphic Higgs bundles and meromorphic connections.
This leads to the notion of {\em nonabelian Hodge space},
 a single differentiable manifold 
with the extra structures following from the 
identifications in \eqref{non_abelian_hodge_space} (see \cite{boalch2018wild}).
The classical case of Corlette, Donaldson, Hitchin 
and Simpson corresponds 
to the case where the boundary data is trivial, 
and the tame case is due to \cite{konno1993construction, nakajima1996hyperkahler} 
building on Simpson's bijective correspondence 
\cite{simpson1990harmonic},  
and Biquard's analytic package \cite{biquard1991fibres}.

Thus in brief we can choose 
data $(\bm\Sigma,\bm\Ccal)$ consisting of a Riemann surface 
and some boundary data,  and this determines some 
incredibly rich geometric objects, as in \eqref{non_abelian_hodge_space}.
As before, in genus zero there is 
a corresponding additive moduli space $\MDR^*$ 
of the same dimension and a symplectic map 
$\nu:\MDR^*\to \MB$  to the wild character variety 
(see \cite{boalch2001symplectic,boalch2007quasi}). 
And again the simplest examples of wild character varieties are of complex dimension $2$, and occur in the theory of Painlev\'e equations, this time for Painlev\'e I-V. 
Note there are many applications. For example 
the original Seiberg--Witten
integrable systems  \cite{seiberg1994electric,seiberg1994monopoles}, 
used to {solve} 4d superYang-Mills 
($\mathrm{SU}_2$ with $N_f=0,1,2,3,4$ flavours),  
are examples of integrable systems obtained as 
autonomous limits of  Painlev\'e equations  $3,5,6$, 
and involve irregular connections/Higgs fields for 
all the basic (asymptotically free) cases with $N_f<4$.\\

Thus it is natural to try to extend the story of graphs/doubled quivers to encompass some of these more general moduli spaces. 
Before stating our main result we will define 
the notion of a diagram, as a 
generalisation of a graph:

\begin{definition}[\cite{boalch2020diagrams}]
A \textit{diagram} is a pair $\Gamma=(N,B)$ where $N$ is a finite set (the set of nodes) and $B=(B_{ij})_{i,j\in N}$ is a symmetric square matrix with integer values, 
such that $B_{ii}$ is even for any $i\in N$.
A \textit{dimension vector} $\mathbf d$ for $\Gamma$ is an element of $\mathbb Z_{\geq 0}^{N}$. 
\end{definition}

We view the integers $B_{ij}$ as giving edge 
multiplicities between nodes (or edge loops if $i=j$).
A graph is thus the special case of a diagram with 
$B_{ij}\ge 0$, and it is simple (or {\em simply-laced})
if $B_{ij}\in \{0,1\}$ and $B_{ii}=0$.

Let $\Sigma$ be a genus 0 compact Riemann surface, and let us choose a point $\infty\in \Sigma$. Given these choices, in this work:

\begin{itemize}

\item For any algebraic connection $(E,\nabla)$ on a Zariski open subset of $\Sigma$, we define a diagram $\Gamma(E,\nabla)$, and

\item For any choice of minimal \textit{marking} 
 of $(E,\nabla)$, we define a dimension vector $\mathbf d$ for $\Gamma(E,\nabla)$.

\end{itemize}

See Def.\ref{def:def_diagram_introduction} for the direct definition of the diagram and Def. \ref{def:marking} for the definition of marking. 
This generalises  the previous definitions of 
\cite{crawley2006multiplicative, boalch2008irregular,boalch2012simply, boalch2020diagrams} for connections who belong to the classes of cases considered in these works, as we will list below. 
None of these previous approaches applied in the case with more than one irregular singularity.

The main properties of the diagrams are as follows. 
If $(E,\nabla)$ is a connection, let $(\bm\Sigma,\bm\Ccal)$ denote its boundary data, and let $\MB(E,\nabla)=\MB(\bm\Sigma,\bm\Ccal)$
be the corresponding symplectic wild character variety.
The diagrams only depend on the boundary data so we will sometimes write $\Gamma(\bm\Sigma,\bm\Ccal)=\Gamma(E,\nabla)$ for the diagram.

\begin{theorem}
If the connection $(E,\nabla)$ is irreducible, the dimension of its wild character variety $\MB(\bm\Sigma,\bm\Ccal)$ is given by the quiver variety dimension formula:
\begin{equation}
\dim_{\mathbb C} \MB(\bm\Sigma,\bm\Ccal)=2-(\mathbf d,\mathbf d)
\label{intro_formula_dimension}
\end{equation}
where $\mathbf d$ is the dimension vector coming from any choice of minimal marking of $(E,\nabla)$, and
$(\,\cdot\,,\,\cdot\,)$ is the bilinear form determined by the
(symmetrized) Cartan matrix $C=2-B$ of the diagram $\Gamma(\bm\Sigma,\bm\Ccal)$.
\end{theorem}

Furthermore, we can relate connections to modules over the Weyl algebra
of the affine line $\Sigma\setminus\{\infty\}$.
Then we can consider
the symplectic group 
$\SL_2(\mathbb C)$ of Weyl algebra automorphisms,
containing the Fourier--Laplace automorphism.
This group $\SL_2(\mathbb C)$ then acts
on the set of isomorphism classes of irreducible algebraic connections $(E,\nabla)$ on Zariski open subsets of $\Sigma$ (excluding rank one connections with only a pole of order less than $2$ at infinity). 
In general it changes the rank and pole orders/configurations and gives lots of mysterious isomorphisms between 
completely different wild character varieties. We have:

\begin{theorem}
The diagram is invariant under the action of 
$\SL_2(\mathbb C)$: if $A\in \SL_2(\mathbb C)$, we have
\[
\Gamma(A\cdot (E,\nabla))=\Gamma(E,\nabla).
\]
\end{theorem}

This is a key property and allows us to reduce to 
the 
case establish by 
Boalch--Yamakawa \cite{boalch2020diagrams} 
with at most one irregular singularity.
A surprising fact is that the definition of the diagram
we give is completely direct (independent of any choice of reduction to the setting of \cite{boalch2020diagrams}).\\

Before giving the definition, here are some of the precursors leading to our definition:

1) Okamoto defined affine Weyl groups for all six Painlev\'e equations (see the review \cite{okamoto1992painleve}), and so one can consider their diagrams. Beware that Okamoto and some other authors prefer to parameterise the Painlev\'e equations by a different diagram, the
{\em Okamoto diagram}.
For example Painlev\'e 2 has 
diagram $\widehat A_1$ and Okamoto diagram $\widehat E_7$.
It is not known how to define Okamoto diagrams 
for nonabelian Hodge moduli spaces of dimension $>2$.

2) The star-shaped case was understood by Crawley-Boevey as already discussed, in relation tame/Fuchsian connections.
This work was connected to Okamoto's work in 
\cite{boalch2009quivers}, and exercise 3 of that paper showed that the same story works for some of the non-star-shaped/irregular cases of dimension $2$, related to the cases of Painlev\'e 2, 4, 5: the corresponding additive moduli spaces $\MDR^*$ 
of these Painlev\'e equations (studied earlier in \cite{boalch2001symplectic}) were isomorphic to the Nakajima quiver varieties
for the quiver corresponding to their Okamoto's affine Weyl group, i.e. of types $\widehat A_1,\widehat A_2,\widehat A_3$ respectively.

3) Subsequently Boalch proved \cite{boalch2008irregular, boalch2012simply} 
that the Nakajima quiver variety 
of any complete $k$-partite graph for any $k$ appeared as an additive moduli space $\MDR^*$ of meromorphic connections on the Riemann sphere. 
More generally he defined a new class of graphs, the simply-laced supernova graphs, generalising the stars (gluing legs on to 
a $k$-partite graph), and showed all them were modular, 
i.e. that their Nakajima quiver varieties appeared as additive moduli spaces $\MDR^*$.
(In turn this led to a new theory of multiplicative quiver varieties
\cite{boalch2016global}, different to the classical theory, 
by considering the wild character varieties $\MB$ of these quiver varieties $\MDR^*$.)

4) More generally, beyond the simply-laced case, 
in the appendix of  \cite{boalch2008irregular} Boalch
conjectured that any additive moduli space $\MDR^*$
on the Riemann sphere with at most one untwisted
irregular pole (and any number of tame poles) was isomorphic to a 
Nakajima quiver variety, and gave a prescription 
to define the graph. 
This {\em quiver modularity conjecture} was proved by Hiroe--Yamakawa  \cite{hiroe2014moduli}. 
Their proof is surveyed in Yamakawa's article \cite{yamakawa2020applications}.

5) More generally a different strategy was needed since not all of the additive moduli spaces are quiver varieties  (this was remarked already in \cite{boalch2008irregular} p.3). 
The simplest counterexample is the case of Painlev\'e III, that Okamoto had related to affine $B_2$ 
(note that Nakajima quiver varieties are not defined for 
non-ADE affine Dynkin graphs).
By carefully considering the construction of the general wild character varieties Boalch--Yamakawa \cite{boalch2020diagrams} were led to the 
definition of diagram for any connection on the Riemann sphere with at most one irregular pole (possibly twisted), 
and they showed that 
the quiver dimension formula holds.
Thus, for example, they completed the list of diagrams for the six Painlev\'e equations as follows (where the dashed line indicates a negative 
edge):

\begin{figure}[H]
\centering
\includegraphics[scale=0.7]{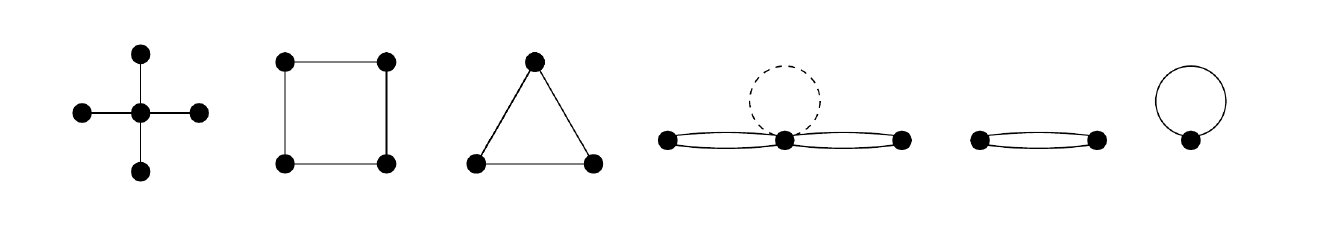}
\label{fig:list_diagrams_BY}
\caption{Diagrams for the Painlevé equations VI, V, IV, III, II, I.}
\end{figure}

Observe that the symmetrized Cartan matrix for 
the Painlev\'e III case is: 
$$C=\begin{pmatrix} 2 & -2 & 0 \\ -2 & 4 & -2 \\ 0 & -2 & 2 \end{pmatrix} = D\Ccal
\quad \text{where}\quad \Ccal=\begin{pmatrix} 2 & -2 & 0 \\ -1 & 2 & -1 \\ 0 & -2 & 2 \end{pmatrix}, D=\diag(1,2,1).$$
The corresponding (non-symmetric) Cartan matrix $\Ccal$ is 
denoted $D_3^{(2)}$ in Kac's notation, 
and is the transpose (Langland's dual) of $\widehat B_2$, so has the same affine Weyl group as that constructed by Okamoto.
The dimension vector in this case is $\mathbf d=(1,1,1)$ so 
$(\mathbf d,\mathbf d)=0$ and the Painlev\'e III moduli space indeed has dimension $2$.

However the Boalch--Yamakawa definition only works for
connections with at most one irregular pole
(and so they were only able to access the Painlev\'e III spaces 
via their alternate/non-standard representation with one twisted irregular pole, known as degenerate PV, rather than via the standard Lax representation with two poles of order two).
This leads to the present work, giving a direct definition that works in general.
In particular our general definition applied to the standard  Painlev\'e III linear system will give the same diagram 
as that of Boalch--Yamakawa above. 

More generally, our framework allows us to extend to new cases, including all Lax representations for 4-dimensional Painlevé-type equations listed in \cite{kawakami2018degeneration}, the observation that for most instances where several different representations are known for one given Painlevé-type equation, these representations correspond to different readings of the same diagram.
\\

To conclude this outline, let us briefly comment on related recent work by other authors. In \cite{hiroe2017linear}, Hiroe defines quivers associated to unramified irregular connections on with an arbitrary number of irregular singularities, and shows that the additive de Rham moduli spaces are isomorphic to open dense subsets of the corresponding quiver varieties, and uses this to solve the additive irregular Deligne-Simpson problem in this case. Our approach is many in respects different from his since he does not use the Fourier transform to define the quiver, and our diagrams are not the same as his (see remark \ref{rem:hiroe_diagrams}). Furthermore, particular cases of our diagrams have been studied by Xie \cite{xie20213d} in the context of 3d mirror symmetry for some Argyres-Douglas theories.

\subsection{The construction}

Here is a summary of our main definition 
(see the body of the article for full details).
Let $(E,\nabla)\to U$ be an algebraic connection on a vector bundle on a Zariski open subset $U\subset \mathbb P^1$ of the Riemann sphere $\mathbb P^1=\mathbb C\cup \infty$. 
The diagram $\Gamma(E,\nabla)=(N,B)$ of the connection $(E,\nabla)$
is defined by 
constructing a core diagram with nodes $N_c\subset N$
and then gluing a single leg  onto each core node 
 (a leg is a type $A$ Dynkin graph). 
As in \cite{crawley2006multiplicative,boalch2020diagrams} the legs are each determined by a 
conjugacy class in a general linear group, so the 
main part of the construction is to define the core diagram
(with nodes $N_c\subset N$), and a conjugacy class for each core node.

The set of core nodes $N_c$ is the disjoint union of three finite sets:
\begin{align*}
N_c=&\{\text{Stokes circles of $(E,\nabla)$ at $\infty$}\} \  \cup \\
&\{\text{wild Stokes circles of } (E,\nabla) \text{ not at $\infty$}\} \ \cup\\
&\{\text{tame Stokes circles of } (E,\nabla)\text{ not at $\infty$ with 
nontrivial formal monodromy}\}.
\end{align*}
To understand this recall that the formal 
structure of $(E,\nabla)$ at each point 
$a\in \mathbb P^1\setminus U$ is given by an irregular class, a finite 
multiset of Stokes circles 
$$\Theta_a=\sum n_i\cir{q_i}$$ at $a$, 
plus a (formal monodromy) conjugacy class
$\Ccal_i\subset \GL_{n_i}(\mathbb C)$ for each Stokes circle 
$\cir{q_i}$ in $\Theta_a$ (see \S\ref{sec:formal_data} for more details).

The matrix of edge multiplicities 
between the core nodes
is then defined as follows. 
For any Stokes circle $i=\cir{q}$, 
let $\alpha_i=\text{Irr}(i), \beta_i=\ram(i)\in \mathbb N$ 
denote its irregularity and 
ramification numbers (see \S\ref{sec:formal_data}),  
so that $i$ has slope $\alpha_i/\beta_i$.
For any pair of Stokes circles $i,j$ at the same point of $\mathbb P^1$, 
the irregular class $\Hom(i,j)$ is well defined (and of rank $\beta_i\beta_j$).
Then we define,
following \cite{boalch2020diagrams}, 
an integer $B^\infty_{ij}$ as follows:
\begin{align}
B^\infty_{ij}&=A_{ij}-\beta_i\beta_j,\qquad \text{if $i\neq j$} \label{eq: BY1}\\
B^\infty_{ii}&=A_{ii}-\beta_i^2+1,\quad \,\text{otherwise} \label{eq: BY2}
\end{align}
where $A_{ij}=\Irr(\Hom(i,j))$
is the  irregularity of the irregular 
class $\Hom(i,j)$.

Then our main definition is:

\begin{definition}
\label{def:def_diagram_introduction}
Suppose $i,j\in N_c$ are core Stokes circles, and write
$a_i=\pi(i),a_j=\pi(j)\in \mathbb P^1$ for the underlying 
points of the Riemann sphere.
Define $B_{ij}\in \mathbb Z$ as follows:
\begin{enumerate}
\item If $a_i=a_j=\infty$ then $B_{ij}=B^{\infty}_{ij}$,
\item If $a_i=a_j\neq \infty$ then
$B_{ij}=B_{ji}=B^{\infty}_{ij}-\alpha_i\beta_j-\alpha_j\beta_i$,
\item If $a_i=\infty$ and $a_j\neq \infty$
then $B_{ij}=\beta_i(\alpha_j+\beta_j)$,
\item If $a_i\neq \infty$ and $a_j =\infty$
then $B_{ij}=B_{ji}=\beta_j(\alpha_i+\beta_i)$,
\item If $a_i\neq \infty,\; a_j\neq \infty$ and
$a_i\neq a_j$ then $B_{ij}=B_{ji}=0$.
\end{enumerate}
\end{definition}

This completes the definition of the core diagram. 
Each node $i\in N_c$ is equipped with a conjugacy class $\breve\Ccal_i$ (and thus a leg, as in \S\ref{sec:def_diagram_infinity}) as follows:  
if $i$ is wild or at $\infty$ then 
$\breve\Ccal_i=\Ccal_i$ (the formal 
monodromy conjugacy class of $(E,\nabla)$ around 
the Stokes circle $i$).
If $i$ is tame and at finite distance then 
$\breve\Ccal_i$ is the {\em child}
of $\Ccal_i$ in the sense that 
if $1+ab\in \Ccal_i$ 
with $a,b$ linear maps, with $b$ surjective and $a$ injective, then 
$(1+ba)^{-1}\in \breve\Ccal_i$ 
(see e.g.  \cite{boalch2016global} Appx. for this terminology).

Taking the legs of these conjugacy classes for all core nodes, and gluing them to the core,  defines 
the full diagram (see \S\ref{sec:def_diagram_infinity} below).
If all the circles at finite distance are tame (so $\alpha_i=0$ for all those core nodes, and the
formulae (2), (3), (4) simplify) then this 
definition specialises to that of 
Boalch--Yamakawa \cite{boalch2020diagrams}.

We will prove directly that this does indeed define a diagram (i.e. $B_{ii}$ is an even integer) in \S \ref{section_invariance of the diagram}, independently of the proof suggested in \cite{boalch2020diagrams} for their
special case.

\subsection{Organisation of the paper}

Here is a brief outline of the structure of the article. In section 2 we recall some facts about formal data of meromorphic connections, which we formulate in terms of local systems on a large collection of circles corresponding to the exponential factors of the connection. There are some subtleties involving the relation between connections and modules over the Weyl algebra, which lead to the introduction of the modified formal data. In section 3, we discuss the action of $\SL_2(\C)$. We review the stationary phase formula which relates the formal data of a connection to those of its Fourier transform. In section 4, we study the diagram associated to a connection with just one singularity at infinity defined in \cite{boalch2020diagrams} and show that it is invariant under generic symplectic transformations. This is the crucial fact that will allow to get a well-defined diagram in the general case. To this purpose, we find an explicit formula for the number of edges and loops of the diagram which is of independent interest. We also determine the monomials appearing in the Legendre transform of any exponential factor.  
The definition of the diagram in the general setting is obtained in section 5, where we also discuss how the different readings of the diagram are generalized to our setting.
In section 6, we look at the dimension of the wild character variety, and show that it is given from the graph by the formula \eqref{intro_formula_dimension}. Finally, in section 7 we discuss several examples of diagrams, many of them related to Painlevé equations and show that in several new cases several known Lax representations for Painlevé-type equations correspond to different readings of the diagram. To facilitate the reading, a few somewhat lengthy proofs are relegated to appendices.

\subsection*{Acknowledgements} I thank my advisor Philip Boalch for his constant support and many useful discussions.

\section{Formal data of irregular connections}
\label{sec:formal_data}

In this section, we describe the data encoding the formal type of an irregular connection on the Riemann sphere at each of its singular points. Among the different existing approaches to the formal classification of meromorphic connections, the one we will use is the geometric description of \cite{boalch2015twisted} close to the point of view of Deligne and Malgrange \cite{deligne1978letter,malgrange1991equations}, in terms of graded local systems on the circles of directions around the singular points.
We refer the reader to \cite{boalch2015twisted} for more details.

\subsection{The exponential local system}

Let $\Sigma$ be an algebraic curve, and $a\in \Sigma$.  We consider the real blow-up $\pi_a : \widehat{\Sigma}_a\to\Sigma$ of $\Sigma$ at $a$, such that $\partial_a:=\pi^{-1}(a)$ is the circle of directions around $a$.

For $a\in \Sigma$, let $z_a$ be a local coordinate at $a$. We define a local system of sets (i.e. a cover) $\Ical$ on the circle $\partial_a$ in the following way: if the open subset $U\subset \partial_a$ is a germ of angular sector at $a$, the sections of $\Ical_a$ on $U$ are functions of the form
\[ q_U(z_a)=\sum_{i=1}^s b_i z_a^{-i/r},
\]
where $r,s\in \N$, $b_i\in \C$ for $i=1,\dots,s$, for some determination on $U$ on the $r$-th root $z^{1/r}$. It is also possible to give an intrinsic definition of $\Ical_a$ independent of the choice of a local coordinate, see \cite[Remark 3]{boalch2015twisted}.

As a topological space, the cover $\Ical_a$ is a disjoint union of circles. To  any polynomial $q\in z_a^{-1/r}\C[z_a^{-1/r}]$ in $z_a^{1/r}$ corresponds a connected component of $\Ical_a$, homeomorphic to a circle, that we will denote $\cir{q}_a$, or simply $\cir{q}$ when there will be no ambiguity about the point $a$ which is considered. The function $z_a\mapsto q(z_a)$ on the punctured disk $D^\times_a$ around $a$ is multivalued: if $r$ is the smallest possible denominator for the exponents appearing in $q$, the cover $\cir{q}\to \partial_a$ is of order $r$; each leaf of the cover above $U\subset \partial_a$ corresponding to a determination of the $r$-th root $z_a^{1/r}$. The  number $r=:\ram(q)$ is the \textit{ramification index} of $q$, its degree $s$ as a polynomial in $z_a^{-1/r}$ is its \textit{irregularity} $\Irr(q):=s$. The quotient $s/r$ is the \textit{slope} of $q$. The circle $\cir{0}$ is the \textit{tame circle}. 

If $I\in \pi_0(\Ical_a)$ is a connected component of $\Ical_a$, there is no unique $q$ such that $I=\cir{q}_a$: if $\omega$ is a $r$-th root of unity with $r=\ram(q)$, one has $\cir{q(z_a)}=\cir{q(\omega z_a)}$. The set of polynomials $q'$ such that $I=\cir{q'}_a$ is the Galois orbit of $q$ under the action of the group $\mu_r$ of $r$-th roots of unity. We thus have  
\[\Ical_a =\bigsqcup \cir{q}_a. \]
Let us fix a direction $p\in \partial_a$, and denote by $(\Ical_a)_p:=\pi_a^{-1}(p)$ the fibre of $\Ical_a$ at $p$. The monodromy of $\Ical_a$ is an automorphism $\rho_a : (\Ical_a)_p\to (\Ical_a)_p$. 

All this was local at a point $a\in\Sigma$. At the global level, we define the global exponential local system to be
\[ \Ical = \bigsqcup_{a\in \Sigma} \Ical_a.
\]
As a topological space, $\Ical$ is thus a large collection of disjoint circles.

\subsection{Graded local systems}

\begin{definition}
An $\Ical_a$-\textit{graded local system} is a local system $V^0\to \partial_a$ of finite-dimensional vector spaces endowed with a point-wise grading
\[ V^0_p=\bigoplus_{i\in (\Ical_a)_p} V^0_{p,i},
\]
where $V^0_p$ is the fibre of $V^0$ at $p$, for each point $p\in \partial_a$, such that the grading is locally constant.
\end{definition}

If we fix a base point $p\in \partial_a$, let $\rho\in \Aut((\Ical_a)_p)$ be the monodromy of $\Ical_a$ and $\widehat{\rho}\in \GL(V^0_p)$ the monodromy of $V$, this implies that 
\[ \widehat{\rho}(V^0_{p,i})=V^0_{p,\rho(i)}.
\]
The data of its monodromy determines the local system $V^0$ up to isomorphism. More precisely, the isomorphism class of an $\Ical_a$-graded local system $V^0$ corresponds to the conjugacy class of $\widehat{\rho}$ under graded automorphisms of $V^0_p$. 

Let $V^0\to \partial_a$ be an $\Ical_a$-graded local system. For $i\in (\Ical_a)_p$, let $n_i:=\dim V^0_{i,p}$.  If $I=\cir{q}$ is a circle in $\Ical_a$ with ramification order $r$, the fibre ${\cir{q}}_p$ of the restriction $\cir{q}\to \partial_a$ has $r$ elements. Let $V^0_{\cir{q}}\to \partial$ be the restriction of $V^0$ to its $\cir{q}$-graded part. Then all graded pieces of $V^0_{\cir{q}}\to \partial$ have the same dimension $n_{\cir{q}}$, that is $n_{\cir{q}}=n_i$ for all $i\in (V^0_{\cir{q}})_p$. The integer $n_{\cir{q}}$ is the \textit{multiplicity} of $\cir{q}$. 

\begin{definition}
A (local) irregular class at $a$ is a function $\pi_0(\Ical_a)\to \N$.
\end{definition} 

The \textit{irregular class} of $V^0$ is the function 
\begin{align*}
\Theta_a(V^0) : \pi_0(\Ical_a)&\to \N,\\
\cir{q}&\mapsto n_{\cir{q}}.
\end{align*}
The \textit{active circles} the circles $\cir{q}$ such that $n_{\cir{q}}\neq 0$. 

The classical Fabry-Hukuhara-Turritin-Levelt theorem on the formal classification of meromorphic connections is formulated in this language in the following way: 

\begin{theorem}[\cite{deligne1978letter,malgrange1982classification}]
The category of connections on the formal punctured disk is equivalent to the category of $\Ical_a$-graded local systems.
\end{theorem}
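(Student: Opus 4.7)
The plan is to invoke the classical Levelt--Turrittin decomposition theorem over a ramified cover and then translate its content into the language of $\Ical_0$-graded local systems via Galois descent. Recall the analytic form of Levelt--Turrittin: for any connection $(E,\nabla)$ over $\C((z))$ there exists an integer $r\ge 1$ such that the pullback by $\rho_r : z\mapsto z^{1/r}$ admits a unique decomposition
\[
\rho_r^*(E,\nabla)\cong \bigoplus_{q} E_q \otimes R_q,
\]
where $q$ runs over a finite set of polynomials in $z^{-1/r}$, $E_q$ is the rank-one connection $(\Ocal,d+dq)$, and $R_q$ is a regular-singular connection. Moreover, the finite set of $q$'s appearing is unique, and the $R_q$'s are determined up to isomorphism.

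I would then construct the equivalence $\Phi$ from connections to $\Ical_0$-graded local systems as follows. After passing to a sufficiently divisible $r$, the Galois group $\mu_r$ of the cover acts on the finite set of exponents $\{q\}$, and its orbits are in natural bijection with the active connected components $\cir{q}$ of $\Ical_0$; in particular the $r$ preimages of a basepoint $p\in\partial_0$ in $\cir{q}$ correspond exactly to the $r$ determinations of $z^{1/r}$ labelling the conjugate exponents. Applying the usual Riemann--Hilbert correspondence on the circle to each regular part $R_q$ gives a local system on $\partial_0$ (after descent), and taking the direct sum over $q$ with the grading indexed by the fibres of $\Ical_0$ yields an $\Ical_0$-graded local system $V^0$. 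The local constancy of the grading follows because the $\mu_r$-action on the exponents agrees with the local monodromy of the cover $\Ical_0\to\partial_0$, and the fact that graded pieces along one circle $\cir{q}$ all have the same dimension $\operatorname{rk}(R_q)$ is automatic from the decomposition.

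In the other direction, given an $\Ical_0$-graded local system $V^0$, decompose it according to active circles $V^0=\bigoplus_{\cir{q}} V^0_{\cir{q}}$. For each active $\cir{q}$ of ramification $r_q$, the restriction $V^0_{\cir{q}}\to\partial_0$ is an $r_q$-fold covering whose graded pieces all have the same dimension $n_{\cir{q}}$, so after pullback to the cover it trivialises to a direct sum of $n_{\cir{q}}$ copies of a regular singular local system $R_q$ on the corresponding circle. One then builds $E_q\otimes R_q$ on the $r_q$-fold cover, and descends the sum $\bigoplus_{q\in\text{orbit}}E_q\otimes R_q$ to $\C((z))$ using the tautological $\mu_{r_q}$-equivariance provided by the graded-local-system structure. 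This produces the inverse functor $\Psi$, and by construction $\Psi\circ\Phi$ and $\Phi\circ\Psi$ recover the identities on objects, with functoriality on morphisms reducing to the observation that a morphism of graded local systems restricts to a morphism on each isotypic piece, and conversely any morphism preserves the Levelt--Turrittin decomposition since the latter is canonical.

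The main obstacle I expect is the Galois descent bookkeeping: namely, showing rigorously that an $\Ical_0$-graded local system structure on $\partial_0$ is equivalent to the data of a $\mu_r$-equivariant Levelt--Turrittin decomposition on the $r$-fold cover, and that this correspondence is independent of the choice of divisible $r$. Once this is in place, the theorem reduces to packaging the Levelt--Turrittin theorem together with the Riemann--Hilbert correspondence for regular singular connections on the punctured disk; the remaining verifications (compatibility with tensor products, isomorphism classes matching up with conjugacy classes of graded monodromies) are then routine.
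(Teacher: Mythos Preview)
The paper does not actually prove this theorem: it is stated as a classical result and attributed to Deligne and Malgrange via the citations \cite{deligne1978letter,malgrange1982classification}. The only comment the paper offers is the sentence immediately following the statement, explaining that the active circles correspond to the exponential terms $e^{q_i}$ in the Hukuhara--Turrittin normal form of the horizontal sections. So there is nothing to compare your argument against on the paper's side.

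That said, your sketch is the standard route and is essentially correct in outline. The equivalence is indeed a repackaging of the Levelt--Turrittin decomposition together with the regular-singular Riemann--Hilbert correspondence, and the passage from the $\mu_r$-equivariant decomposition on the ramified cover to an $\Ical_0$-graded local system on $\partial_0$ is exactly the descent statement you describe. One point worth tightening: in your inverse functor $\Psi$, the phrase ``trivialises to a direct sum of $n_{\cir{q}}$ copies of a regular singular local system $R_q$'' is slightly off --- pulling back $V^0_{\cir{q}}$ to the $r_q$-fold cover of $\partial_0$ splits it into $r_q$ pieces, each a single rank-$n_{\cir{q}}$ local system (not $n_{\cir{q}}$ copies of something), and it is that rank-$n_{\cir{q}}$ local system which plays the role of $R_q$. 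The independence of the choice of divisible $r$ you flag as the main obstacle is genuinely the one nontrivial bookkeeping point, but it follows from the uniqueness clause in Levelt--Turrittin together with the fact that the circles $\cir{q}$ for different $r$ are compatible under refinement.
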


The basic idea behind this is that the active circles $\cir{q_i}$ of the $\Ical_a$-graded local system associated to the formal connection correspond to the exponential terms $e^{q_i}$ in the horizontal sections of the Hukuhara-Turritin normal form. \\

It is possible to view an $\Ical_a$-graded local system $V^0\to \partial$ as a local system on $\Ical_a$, such that the following diagram commutes:
\[
\begin{tikzcd}
V^0 \arrow[r] \arrow[rd] & \Ical \arrow [d,"\pi"]\\
 & \partial
\end{tikzcd}
\]
The isomorphism class of a local system $V^0\to \Ical_a$ with irregular class $\Theta_a$ is given by the collection $\bm{\Ccal}=(\Ccal_{\cir{q}})$, where $\Ccal_{\cir{q}}\subset \GL_{n(\cir{q})}(\C)$ of the conjugacy classes of its monodromies around its active circles.\\

In turn, global formal data are obtained by putting together all local formal data. Let $(E,\nabla)$ be an algebraic on a Zariski open subset of $\Sigma$. From the formal classification at each of its singularities, $(E,\nabla)$ defines a global formal local system $V^0\to \Ical$, with support on the singular points. Its isomorphism class is given by the pair $(\bm\Theta,\bm{\Ccal})$, with $\bm\Theta:\pi_0(\Ical)\to \N$ the global irregular class, and $\bm{\Ccal}$ the collection of the formal monodromies at each active circle. We will refer to the pair $(\bm\Theta,\bm\Ccal)$ as the  (global) \textit{formal data} of the connection $(E,\nabla)$.

\subsection{Modified formal data}

In the rest of article, we fix a choice of a genus 0 compact Riemann surface $\Sigma$, and a point $\infty\in \Sigma$. We will also at times fix a choice of isomorphism between $\Sigma$ and $\mathbb P^1=\C  \cup\{\infty\}$. To arrive at our construction of the diagrams, we will need to use the $\SL_2(\C)$ action and in particular the Fourier-Laplace transform. However, the category which the Fourier-Laplace transform acts on is not a category of connections on the affine line, but the category of modules on the Weyl algebra $A_1=\C[z]\cir{\partial_z}$. This is the source of an important subtlety, pertaining to the fact that slightly more formal data are needed to describe a $A_1$-module than a connection. We take this into account by introducing \textit{modified formal data}. In this paragraph, we first define modified formal data for connections on the affine line, then discuss (modified) formal data of $A_1$-modules.

Let us first define modified local formal data for connections. We consider the local situation at a singularity $a\in \Sigma$. Let $V^0_{\cir{0}_a}$ be a local system on the tame circle $\cir{0}_a$, with monodromy $T$. We associate to it another local system $\breve{V}^0_{\cir{0}_a}\to \cir{0}_a$ defined by $\breve{V}^0_{\cir{0}_a}:=\Im(T-\Id)$. Its rank is $\rank(T-\Id)$ and it has monodromy $T_{|\Im(T-\Id)}$. Notice that the data of $\breve{V}^{0}_{\cir{0}_a}$ together with the rank of $V^0_{\cir{0}_a}$ is enough to reconstruct the isomorphism class of $V^0_{\cir{0}_a}$. 

Now, let $V^0\to \Ical_a$ be a formal local system at $a$. We define the \textit{modified formal local system} $\breve{V}^0\to \Ical$ associated to $V^0$ to be the local system on $\Ical_a$ such that $\breve{V}^0_{\cir{0}_a}$ is obtained from $V^0_{\cir{0}_a}$ as described above, and $\breve{V}^0_{\cir{q}_a}=V^0_{\cir{q}_a}$ for any other connected component $\cir{q}_a$. 

Let us now turn to the global situation, for $\Sigma=\Pbb^1$. If $V^0\to\Ical_a$ is a global formal local system, applying at each point at finite distance the previous construction defines a modified global formal local system $\breve{V}^0\to \Ical$.

\begin{definition}
Let $(E,\nabla)$ an algebraic connection on a Zariski open subset of $\Sigma$, $V^0\to \Ical$ its formal local system and $(\bm\Theta,\bm{\Ccal})$ its (global) formal data. Its global modified formal data $(\bm{\breve\Theta},\bm{\breve\Ccal})$ correspond to the isomorphism class of the global modified local system $\breve{V}^0\to \Ical$ obtained by taking the (local) modified formal local system at each point $a\neq\infty$ at finite distance, and keeping the non-modified formal local system $V^0\to \Ical_\infty$ at infinity.
\end{definition}

Notice that $\breve{V}^0$ satisfies the condition $\rank(\breve{V}^0_a)\leq \rank{\breve{V}^0_\infty}$ since for each $a\neq \infty$ passing to the modifed formal local system lowers the rank. If $\breve{V}\to \Ical$ is a formal local system, with isomorphism class $(\bm{\breve\Theta},\bm{\breve\Ccal})$, we say that $V$, or $(\bm{\breve\Theta},\bm{\breve\Ccal})$ is \textit{compatible} if it satisfies this condition. If there exists a connection $(E,\nabla)$ on a Zariski open subset of the affine line such that $(\bm{\breve\Theta},\bm{\breve\Ccal})$ is its modified formal data, we say that $(\bm{\breve\Theta},\bm{\breve\Ccal})$ is \textit{effective}. In particular $(\bm{\breve\Theta},\bm{\breve\Ccal})$ cannot be effective if it is not compatible.

If we know the modified formal local system $\breve{V}^0$ of a connection $(E,\nabla)$, since we keep the non-modified local system $V^0_\infty\to\Ical_\infty$ at infinity, we know the rank of $(E,\nabla)$. It is given by
\[ \rank(E,\nabla)=\sum_{\cir{q}\in \pi_0(\Ical_\infty)} n_{\cir{q}}\ram(q).
\]

In turn, this enables us to reconstruct from $\breve{V}^0\to\Ical$ the isomorphism class of the non-modifed local system $V^0\to \Ical$. The data of $(\bm{\breve\Theta},\bm{\breve\Ccal})$ is thus equivalent to the data of $(\bm\Theta,\bm{\Ccal})$.\\

Let us now briefly discuss (modified) formal data of $A_1$-modules, following \cite[chapter IV]{malgrange1991equations}. If $M$ is an $A_1$-module, its formal data consist of:
\begin{itemize}
\item At infinity, a formal local system $V^0_\infty\to \Ical_\infty$.

\item For any point at finite distance $a\neq \infty$:
\begin{itemize}
\item A formal local system ${V^0_{\cir{q}}}\to \cir{q}_a$ over each irregular circle at $a$ (with only a finite number of circles having nonzero multiplicity).
 \item A quadruple $(V^0_{\cir{0}_a}, \breve V^0_{\cir{0}_a}, u_a,v_a)$, where $V^0_{\cir{0}_a}$ and $\breve V^0_{\cir{0}_a}$ are local systems over the tame circle $\cir{0}_a$, and $u_a:V^0_{\cir{0}_a}\to \breve V^0_{\cir{0}_a}$, $v_a:\breve V^0_{\cir{0}_a}\to V^0_{\cir{0}_a}$ are linear maps, such that $1+v_au_a$ and $1+u_av_a$ are respectively the monodromies of $V^0_{\cir{0}_a}$ and $\breve V^0_{\cir{0}_a}$.
\end{itemize} 
\end{itemize}

Now, we define the modified formal data of $M$ by forgetting $V^0_{\cir{0}_a}$, $u_a$, $v_a$, for all $a\neq \infty$, more precisely in a similar way as above we define a modified formal local system $\breve V^0\to \Ical$ by setting, for any circle $\cir{q}\in \Ical$, $\breve V^0_{\cir{q}}:=\breve V^0_{\cir{q}}$ if $\cir{q}$ is not a tame circle $\cir{0}_a$ with $a\neq \infty$ and taking $\breve V^0_{\cir{0}_a}$ otherwise. This yields (global) modified formal data $(\bm{\breve\Theta}, \bm{\breve\Ccal})$ attached to $M$.\\

The link between the notions of modified formal data for connections and $A_1$-modules is as follows. If $(E,\nabla)$ is an algebraic connection on on a Zariski open subset $U=\A^1\smallsetminus \{\mathbf{a}\}$ of the affine line, the \textit{minimal extension} of $(E,\nabla)$ is an $A_1$-module $M$, such that its modified formal data in the sense of $A_1$-modules coincide with those of $(E,\nabla)$ in the sense of connections (see \cite[§5.5]{arinkin2008fourier}). 

\section{Action of $\SL_2(\C)$}

We now discuss the action of the Fourier-Laplace transform on modules on the Weyl algebra, and review the stationary phase formula relating the formal data of an $A_1$-module and its Fourier transform. 

\subsection{The stationary phase formula}

The Fourier transform is the automorphism of the Weyl algebra $A_1=\C[z]\cir{\partial_z}$ defined by $z\mapsto -\partial_z, \; \partial_z\mapsto z$. It induces a transformation on modules on the Weyl algebra: the Fourier-Laplace transform $F\cdot M$ of a  $\C[z]\langle \partial_z\rangle$-module $M$ is the $\C[\xi]\langle \partial_\xi\rangle$-module obtained by setting: $z\mapsto -\partial_\xi, \; \partial_z\mapsto \xi$ (we use a dual variable $\xi$ for the image in this paragraph).  

The stationary phase formula \cite{fang2009calculation,sabbah2008explicit} states that the modified formal data of the Fourier transform $M'=F\cdot M$ are determined by the modified formal data of $M$. 

\begin{theorem}
\label{th_stationary_phase} There exists a bijection, that we also denote by $F$, from the set of all modified formal data to itself, such that if $M$ is a module on the Weyl algebra, $(\bm{\breve\Theta},\bm{\breve\Ccal})$ its modified formal data, $M'$ is the Fourier transform of $M$ and $(\bm{\breve\Theta'},\bm{\breve \Ccal}')$ its modified formal data, the following diagram commutes:
\[
\begin{tikzcd}
M \arrow[r,"F"] \arrow[d] & M' \arrow[d]\\
(\bm{\breve\Theta},\bm{\breve\Ccal}) \arrow[r,"F"] & (\bm{\breve\Theta'},\bm{\breve \Ccal}')
\end{tikzcd}
\]
\end{theorem}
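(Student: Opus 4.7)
The statement essentially packages the classical stationary phase formula into the language of modified formal data used in the paper, so my plan is to reduce it to the versions already proved in \cite{fang2009calculation, sabbah2008explicit} (building on \cite{laumon1987transformation, malgrange1991equations}) and then upgrade the local statements to a bijection at the level of global effective data.

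First, I would recall the explicit Fang--Sabbah recipe. Starting from a holonomic $\Dcal$-module on $\A^1$ and its local formal structure at each singular point (finite and at $\infty$), the formula gives a combinatorial rule producing the local formal structure of the Fourier--Laplace transform at each of its singular points. The key features to extract are: each active circle $\cir{q}$ at a finite singularity $a$ is transported via a Legendre transform to an active circle at $\infty$ of the transformed module whose slope $2$ coefficient is $a$; the tame circle at $a$ likewise contributes to new circles at $\infty$; components at $\infty$ of the original module with slope $>1$ get pushed to new finite singular points of the transform, while components of slope $\le 1$ stay above $\infty$. The formal monodromies transport through this rule as well, with some twists involving Gauss sums and signs. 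This gives a well-defined map $F$ on isomorphism classes of (global) formal data.

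Second, I would check compatibility with the modification $V^0\rightsquigarrow \breve{V}^0$. The modification at each finite tame singularity replaces the local system on $\cir{0}_a$ by $\Im(T-\Id)$, i.e.\ by its vanishing cycles; this is precisely the passage from the algebraic connection to its minimal extension as a $\Dcal$-module on $\A^1$, as noted in the remark preceding the theorem. Because the definition of the Fourier transform of $(E,\nabla)$ in the paper goes via the minimal extension $M$, applies the Weyl-algebra Fourier, and then invokes \cite{arinkin2010rigid} to produce $(E',\nabla')$ from the minimal extension of $F\cdot M$, the Fang--Sabbah recipe read off from $M$ and $F\cdot M$ is naturally expressed in terms of the modified formal data on both sides. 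The compatibility condition $\rank(\breve V^0_a)\leq \rank \breve V^0_\infty$ defining ``effective'' data is preserved by the recipe, since the rank of the underlying connection is readable from the formal data at $\infty$ on either side.

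Third, to obtain the bijection I would use involutivity. Composing the Fourier automorphism of $A_1$ with itself gives $(z,\partial_z)\mapsto(-z,-\partial_z)$, an inner automorphism acting trivially on isomorphism classes; so at the level of $\Dcal$-modules $F\circ F=\Id$ up to a harmless relabelling, and the same is therefore true of the induced map on effective modified formal data. This forces the combinatorial map $F$ to be a bijection of the set of effective data onto itself.

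The main obstacle is the bookkeeping in the second step: one must verify carefully that taking vanishing cycles at finite tame points, together with keeping the unmodified $V^0_\infty$ at infinity, is precisely the datum preserved by the Fang--Sabbah formula when the latter is applied to minimal extensions, so that no information is lost or gained in passing between the connection picture and the $\Dcal$-module picture on each side of $F$. This is a technical check rather than a conceptual difficulty, and once it is done the theorem follows; detailed formulas for how the exponential factors, ramifications, irregularities and formal monodromies transform will then be available (and will be used in \S4 of the paper to analyse the core diagram).
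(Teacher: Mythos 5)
Your proposal follows essentially the same route as the paper, whose proof likewise consists of citing Fang and Sabbah for the irregular circles, Malgrange for the regular part (the exchange of regular microsolutions at finite distance with regular solutions at infinity, i.e. exactly the minimal-extension/vanishing-cycles compatibility you verify in your second step), and the observation that the local-system formulation matches the classical one upon taking monodromies. One small correction to your third step: $F\circ F$ is the automorphism $(z,\partial_z)\mapsto(-z,-\partial_z)$, which is neither inner (the units of the Weyl algebra are scalars) nor trivial on isomorphism classes --- it is pullback by $z\mapsto -z$, i.e. the scaling $S_{-1}$, which reflects the finite singular points through the origin; but since $S_{-1}$ is itself a bijection of the set of effective formal data, your conclusion that $F$ is a bijection is unaffected.
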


This map $F$ is the formal Fourier transform. More specifically, the active circles of $\bm{\breve{\Theta}'}$ are related to the active circles of $\bm{\breve{\Theta}}$ by a Legendre transform.

\begin{remark}
\label{rem: Fourier_module_connections}
Under some conditions, the Fourier-Laplace transform induces a transformation on connections \cite[§2.2]{arinkin2010rigid}. Precisely, let $(E,\nabla)$ be an \textit{irreducible} algebraic connection on a Zariski open subset $\Sigma^\circ\subset \C$ let $M$ be its minimal extension, and $M'=F\cdot M$. Then, if we assume that we are not in the case when it has rank one and its only singularity is a second order pole at infinity, there exists an irreducible connection $(E',\nabla')$ on a Zariski open subset ${\Sigma^\circ}'\subset \C$ such that $M'$ is the minimal extension of $(E,\nabla)$. In that case we may call it as in \cite[§2.2]{arinkin2010rigid} the Fourier transform of $(E,\nabla)$. In particular, this implies that if $(\bm{\breve\Theta},\bm{\breve\Ccal})$ is the set of formal data of such an irreducible connection, then this is also the case of $F\cdot(\bm{\breve\Theta'},\bm{\breve\Ccal'})$. 
\end{remark}

Let us describe the formal Fourier transform and express it in our framework of local systems on $\Ical$. To this end, we formulate the Legendre transform as a homeomorphism sending the collection of circles $\Ical$ to the dual collection $\Ical'$, obtained by replacing the variable $z$ by the variable $\xi$. The Legendre transform takes slightly different forms depending on the circles in $\pi_0(\Ical)$. It is therefore necessary to distinguish several types of circles as follows:
\begin{definition}
Any circle in $\Ical$ belongs to one of the following five families:
\begin{enumerate}
\item The \textit{pure circles} at infinity, of the form $\cir{\alpha z}_\infty$, with $\alpha\in\C$. There are of slope $1$ if $\alpha\neq 0$, and 0 otherwise. 
\item Other circles of slope $\leq 1$ at infinity, of the form $\cir{\alpha z +q}_\infty$, with $\alpha\in\C$, and $q\neq 0$ of slope $<1$, 
\item Circles $\cir{q}_\infty$ of slope $>1$ at infinity,
\item Irregular circles at finite distance $\cir{q}_a$, with $q\neq 0$, $a\in\C=\Pbb^1\smallsetminus\{\infty\}$.
\item The tame circles $\cir{0}_a$, $a\in\C$.
\end{enumerate}
\end{definition}
We will denote by $\Ical_1,\dots,\Ical_5\subset \Ical$ the corresponding collections of circles. In a similar way, we denote by $\Ical'_1,\dots,\Ical'_5\subset \Ical'$ the dual collections. The Legendre transform $\Lcal$ yields homeomorphisms 
\begin{align*}
\Lcal &:\Ical_1\to \Ical_5',\\
\Lcal &:\Ical_2\to \Ical_4'\\
\Lcal &:\Ical_3\to \Ical_3',\\
\Lcal &:\Ical_4\to \Ical_2',\\
\Lcal &:\Ical_5\to \Ical_1',
\end{align*}
that we will all denote by $\Lcal$ with a slight abuse of notation. 

Let us first describe the Legendre transform in some detail for circles of type 4 at $z=0$, that is irregular circles at $0$. The other cases are easily deduced from this one by changes of variable.
In this paragraph, we set $\partial:=\partial_0$ the circle of directions at $0\in\Pbb^1_z$, $\Ical:=\Ical_0$ the corresponding exponential local system, $\partial'$ the circle of directions at $\xi=\infty$ in $\Pbb^1_{\xi}$, and $\Ical'\to\partial'$ the corresponding exponential local system. 

\begin{definition}
Let $U\subset\partial$ be a germ of sector, and $q_U\in \Ical(U)$ that is non-zero. For $z\in U$, we set 
\[ \phi(z):=\frac{dq}{dz}(z).
\]
Choosing $U$ small enough, $\phi$ induces a biholomorphism between $U$ and a sector $U'\subset \partial'$. For $\xi\in U'$, we set
\[ \widetilde{q}_{U'}(\xi):=q_U(\phi^{-1}(\xi))-\xi \phi^{-1}(\xi).
\]
This yields a section $\widetilde{q}_{U'}\in \Ical'(U')$, called the Legendre transform of $q_U$. 
\end{definition}

The Legendre transform yields an homeomorphism between $\Ical$ and $\Ical'$, the inverse homemorphism being given by $q_U(z)=\widetilde{q}_{U'}(\phi(z))+\phi(z)z$. It follows from a straightforward computation that the image of an irregular circle $I=\cir{q}$ by the Legendre transform is a circle $I'=\cir{\widetilde{q}}$ with ramification order $r+s$ and slope $s/(r+s)$. 

The general case of circles of type 2,3,4 is similar to the case of a circle of type 4 at $z=0$. Let us summarize the properties of the Legendre transform for the different types of circles.

\begin{itemize}

\item Type 4: if $a\in \Pbb^1\setminus \{\infty\}$ is a point at finite distance, the Legendre transform $\Lcal$ yields an homeomorphism between a circle of type 4 of the form $\langle q\rangle_a$ with slope $\alpha/\beta\neq 0$ at $a$ and the circle of type 2 $\langle -a\xi+\widetilde{q} \rangle_{\infty}$ with $\widetilde{q}$ of slope $\alpha/(\alpha+\beta)<1$, once again determined by the same system of equations as before. The circle $\langle -a\xi+\widetilde{q} \rangle_{\infty}$ thus has slope $1$ if $a\neq 0$, and slope $<1$ if $a=0$.
\[
\begin{tikzcd}
\cir{q}_0 \arrow[r,"\Lcal","1:1"'] \arrow[d,"\pi","\beta:1"'] & \cir{\widetilde{q}}_\infty \arrow[d,"\pi", "\alpha+\beta :1" ']\\
\partial_0  & \partial'_\infty
\end{tikzcd}
\] 
This is obtained in a straightforward way from the special case $a=0$ that we just detailed, by using the relation $z_a=z-a$ between the local coordinate $z_a$ at $a$ and the coordinate $z$ on $\Pbb^1$. The factor $\widetilde{q}$ is given by the same system of equations as previously, with $z$ replaced by $z_a$.

\item Type 2: conversely, if $\cir{-ax+q}_\infty$, with $a\in \C$ and $q_1$ of slope $\alpha/\beta<1$ is a circle of type 2, the Legendre transform induces an homeomorphism between $\cir{q}_\infty$ and a circle of type 4 $\cir{\widetilde{q}}_{a}$ at the point $a\in \Pbb^1\setminus \{\infty\}=\C$, with slope $\alpha/(\beta-\alpha)$. We thus have
\[
\begin{tikzcd}
\cir{q}_\infty \arrow[r,"\Lcal","1:1"'] \arrow[d,"\pi","\beta:1"'] & \cir{\widetilde{q}}_\alpha \arrow[d,"\pi", "\beta-\alpha :1" ']\\
\partial_0  & \partial'_\infty
\end{tikzcd}
\]

\item Type 3: If $\langle q\rangle$ is a circle at infinity of type type 3, of slope $\alpha/\beta>1$, $\Lcal$ induces a homeomorphism of $\langle q\rangle$ on the circle $\langle \widetilde{q}\rangle$ of slope
$\alpha/\alpha-\beta>1$, of type 3. The situation is the following: 
\[
\begin{tikzcd}
\cir{q}_\infty \arrow[r,"\Lcal","1:1"'] \arrow[d,"\pi","\beta:1"'] & \cir{\widetilde{q}}_\infty \arrow[d,"\pi", "\alpha-\beta :1" ']\\
\partial_0  & \partial'_\infty
\end{tikzcd}
\]
\end{itemize}

We now deal with circles of type 1 and 5. Let $a\in\Pbb^{1}\setminus\{\infty\}$, and consider the circles $\cir{0}_a$ and $\cir{-a\xi}_\infty$. We define an homeomorphism $\phi$ between the circles of directions $\partial_a$ and $\partial_\infty$ as follows, for any argument $\theta\in \partial_a$, we draw the half-line $l_{\theta}$ starting  from $a$ with direction $\theta$, and let $\phi(\theta)\in \partial_\infty$ be the direction at which $l_\theta$ approaches $z=\infty$. Notice that this inverses the sense of rotation, in agreement with Malgrange \cite[e.g. p. 98]{malgrange1991equations}. Replacing $z$ by the dual coordinate $\xi$ to identify $\partial_\infty$ to $\partial'_\infty$, we get an homeomorphism $\phi': \partial_a\to \partial'_\infty$ which lifts to an homeomorphism $\Lcal : \cir{0}_a \to \cir{-a\xi}_\infty$, as pictured on the diagram

\[
\begin{tikzcd}
\cir{0}_a \arrow[r,"\Lcal","\cong" '] \arrow[d,"\pi"] & \cir{-a\xi}_\infty \arrow[d,"\pi"]\\
\partial_0 \arrow[r,"\phi'"']  & \partial'_\infty
\end{tikzcd}
\]

The Legendre transform induces a transformation of the local systems on $\Ical$: if $V\to\Ical$ is a local system, we define $\widetilde{V}:=\Lcal_*(V)$ such that the following diagram commutes:
\[
\begin{tikzcd}
V \arrow[r,"\Lcal_*"] \arrow[d] & \widetilde{V} \arrow[d]\\
\Ical \arrow[r,"\Lcal"] & \Ical'
\end{tikzcd}
\]

The previous construction accounts for what happens to the exponential factors in the Laplace method, but the square root term in the Gaussian integral remains to be taken into account. A way to do this is to define for any circle $\cir{q}$ of type 2,3,4 a local system $W_{\cir{\widetilde{q}}}\to \cir{\widetilde{q}}$ of one-dimensional vector spaces. In the case where $\cir{q}$ is a circle of type 2 at zero, the definition can formulated as follows.

\begin{definition}
Let $\cir{q}_0$ be an irregular circle at $z=0$, $\cir{\widetilde{q}}_\infty$ its image by the Legendre transform. Let $r=\ram(q)$ and $s=\Irr(q)$. Let $\zeta$ a variable such that $\zeta^{r+s}=1/\xi$ so that the Legendre transform $\widetilde{q}$ is a polynomial in $\zeta^{-1}$, and the circle $\cir{\widetilde{q}}$ can be parametrized by the directions in the disk $D_\zeta$. We define $W_{\cir{\widetilde{q}}}\to \cir{\widetilde{q}}$ to be the rank one local system with étalé space $W_{\cir{\widetilde{q}}}$ is  $\cir{\widetilde{q}}\times \C$, and a flat section of which given by any determination of the germ of the function $\zeta\mapsto (\partial^2 q/\partial z^2)^{-1/2} (\zeta)$ on the disk $D_\zeta$.
\end{definition}

\begin{lemma}
If $q$ has slope $\alpha/\beta$, with $\beta=\ram(q)$, then the monodromy of $W_{\cir{\widetilde{q}}}$ is $(-1)^\alpha$. 
\end{lemma}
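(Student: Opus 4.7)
The plan is to compute the monodromy directly by expanding $(\partial^2 q/\partial z^2)^{-1/2}$ as a (multi-valued) function of $\zeta$ near $\zeta=0$ and isolating its leading fractional power of $\zeta$, which is the only source of monodromy.

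First I would write $q=\sum_{i=1}^{\alpha} b_i z^{-i/\beta}$ with $b_\alpha\neq 0$, so that the ramification is $\beta$ and the irregularity is $\alpha$. Differentiating twice gives
\[
\frac{\partial^2 q}{\partial z^2}=\sum_{i=1}^{\alpha}\frac{i(i+\beta)}{\beta^2}\,b_i\,z^{-i/\beta-2},
\]
whose dominant term as $z\to 0$ is proportional to $z^{-\alpha/\beta-2}$. Next, by definition of the Legendre transform, $\xi=\partial q/\partial z\sim -(\alpha/\beta)b_\alpha\,z^{-\alpha/\beta-1}$ as $z\to 0$, and inverting together with $\zeta^{\alpha+\beta}=1/\xi$ yields $z=c\,\zeta^{\beta}(1+O(\zeta))$ for a nonzero constant $c$, with the correction a convergent fractional power series in $\zeta$. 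Substituting this back produces
\[
\frac{\partial^2 q}{\partial z^2}(\zeta)=\zeta^{-(\alpha+2\beta)}\,h(\zeta),
\]
where $h$ is holomorphic at $\zeta=0$ with $h(0)\neq 0$. Taking the inverse square root and choosing a branch of $h^{-1/2}$ (which is single-valued since $h$ does not vanish near $0$) gives
\[
\Bigl(\tfrac{\partial^2 q}{\partial z^2}\Bigr)^{-1/2}(\zeta)=\zeta^{(\alpha+2\beta)/2}\,h(\zeta)^{-1/2}.
\]

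Finally I would identify the monodromy: the circle $\cir{\widetilde q}$ is a degree-$(\alpha+\beta)$ cover of $\partial'_\infty$ via $\xi=\zeta^{-(\alpha+\beta)}$, so the generator of $\pi_1(\cir{\widetilde q})$ lifts from $\alpha+\beta$ loops around $\xi=\infty$ to a single loop $\zeta\mapsto e^{2\pi i}\zeta$ in the $\zeta$-disk. Under this rotation the single-valued factor $h^{-1/2}$ is unchanged while $\zeta^{(\alpha+2\beta)/2}$ picks up $e^{\pi i(\alpha+2\beta)}=(-1)^{\alpha}$, giving the claim. The main point requiring care is not the leading term computation but the bookkeeping: verifying that the subleading terms in $\partial^2 q/\partial z^2$ all contribute only to $h(\zeta)$ (i.e.\ produce no new fractional powers of $\zeta$), and checking that one turn in the cover $\cir{\widetilde q}$ is matched with one, not $\alpha+\beta$, turns in $\zeta$.
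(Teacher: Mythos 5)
Your proposal is correct and follows essentially the same route as the paper: compute the leading power of $\zeta$ in $(\partial^2 q/\partial z^2)^{-1/2}$ via $z\sim\zeta^{\beta}$ and read off the monodromy $e^{\pm i\pi(\alpha+2\beta)}=(-1)^{\alpha}$ under one turn in $D_\zeta$. You are in fact slightly more careful than the paper, which works only with leading-order asymptotics, in that you justify that the subleading terms assemble into a nonvanishing single-valued factor $h(\zeta)$ and that one loop on $\cir{\widetilde q}$ corresponds to one loop in $\zeta$.
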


\begin{proof}
One has $q(z)\sim z^{-\alpha/\beta}$, hence $\partial^2 q/\partial z^2\sim z^{-(\alpha+2\beta)/\beta}$. In terms of the coordinate $\zeta$, the Legendre transform implies $\zeta\sim z^\beta$, hence $\partial^2 q/\partial z^2\sim \zeta^{\alpha+2\beta}$, hence $(\partial^2 q/\partial z^2)^{-1/2}\sim \zeta^{-(\alpha+2\beta)/2}$. Therefore, when going around $\zeta=0$ in $D_\zeta$,  $(\partial^2 q/\partial z^2)^{-1/2}$ gets multiplied by $\exp(-2i\pi\times \frac{\alpha+2\beta}{2})=(-1)^\alpha$. Notice this does not depend on the choice of determination of the square root $(\partial^2 q/\partial z^2)$, so that $W_{\cir{\widetilde{q}}}$ is well defined up to isomorphism. 
\end{proof}

The definition of $W_{\cir{q}}$ in the general case is similar, and the lemma remains true. 

\begin{definition}
Let $\breve{V}$ be a local system on $\Ical$, with isomorphism class $(\bm{\breve\Theta},\bm{\breve\Ccal})$. We set $F \cdot \breve V:=\Lcal_*(\breve V)\otimes W$. It is a local system on $\Ical'$. Let $(\bm{\breve\Theta'},\bm{\breve\Ccal'})$ be its isomorphism class. We define $F\cdot (\bm{\breve\Theta},\bm{\breve\Ccal}):=(\bm{\breve\Theta'},\bm{\breve\Ccal'}).$
\end{definition}

The operation $F$ on modified formal local systems on $\Ical$ is the counterpart at the level of formal data of the Fourier-Laplace transform of $A_1$-modules, that is it is such that the theorem \ref{th_stationary_phase} is true.

\begin{proof}
It seems that this result was first understood by Malgrange. The case of irregular circles has been independently proven by Fang \cite{fang2009calculation} and Sabbah \cite{sabbah2008explicit}. An alternative proof was also given later by Graham-Squire \cite{graham2013calculation}. To see that our formulation is indeed equivalent to the one in \cite{sabbah2008explicit}, it suffices to take the monodromy of the local systems. The case of regular circles can be extracted from Malgrange \cite[p.129]{malgrange1991equations}: the Fourier transform exchanges the local system of regular microsolutions at finite distance and the local system of regular solutions at infinity.
\end{proof}

\begin{remark}
If $(\bm{\breve\Theta},\bm{\breve\Ccal})$ is effective, the remark \ref{rem: Fourier_module_connections} guarantees that $F\cdot (\bm{\breve\Theta},\bm{\breve\Ccal})$ is also effective.
\end{remark}

If $F\cdot (\bm{\breve\Theta},\bm{\breve\Ccal})=(\bm{\breve\Theta'},\bm{\breve\Ccal'})$, the irregular class $\bm{\breve\Theta'}$ only depends on $\bm{\breve\Theta}$, so we set $F\cdot \bm{\breve\Theta}:=\bm{\breve\Theta'}$. We will also set $F\cdot\cir{q}:=\Lcal (\cir{q})\in \pi_0(\Ical)$ for any circle $\cir{q}$. Moreover, $\cir{q}$ is an active circle of $\bm{\breve{\Theta}}$ if and only if $F\cir{q}$ is an active circle of $\bm{\breve{\Theta}'}$. Explicitly, if $\bm{\breve{\Theta}}=n_1\cir{q_1}+ \dots n_r \cir{q_r}$, we have 
\[
\bm{\breve{\Theta}'}=F\cdot \bm{\breve{\Theta}}=n_1 F\cdot \cir{q_1}+\dots +n_r F\cdot \cir{q_r}.
\]

From the stationary phase formula we obtain a formula for the rank of the Fourier-Laplace transform of an irreducible connection.

\begin{lemma}
Let $(E,\nabla)$ an irreducible connection on a Zariski open subset of the affine line with modified formal data $(\bm{\breve\Theta},\bm{\breve\Ccal})$. The rank of $F\cdot (E,\nabla)$ is given by
\begin{equation}
\sum_k \left( m_k + \sum_{i=1}^{r_k} n^{(k)}_i (\alpha^{(k)}_i+\beta^{(k)}_i) \right) + \sum_{i,slope >1} n^{(\infty)}_i (\alpha^{(\infty)}_i - \beta^{(\infty)}_i),
\end{equation}
where the second sum is on all active circles $\cir{q^{(\infty)}_i}_\infty$ at infinity with slope $\alpha^{(\infty)}_i/\beta^{(\infty)}_i>1$. 
\end{lemma}

\begin{proof}
From the stationary phase formula the active circles of $V'$ at infinity are the following: 
\begin{itemize}
\item Images by Legendre transform of irregular circles $\cir{q^{(k)_i}}_{a_k}$, $k=1,\dots,m$ i.e circles $\cir{-a_k\xi +\widetilde{q}^{(k)_j}}_\infty$, with ramification $ \alpha^{(k)}_i+\beta^{(k)}_i$ and multiplicity $n^{(k)}_i$.
\item Images of the tame circles $\cir{0}_{a_k}$, $k=1,\dots,m$: pure circles $\cir{-a_k \xi}_\infty$, with multiplicity $m_k$.
\item Images of circles of slope $>1$ at infinity among the $\cir{q^{(\infty)}_i}, i=1,\dots, r_\infty$, of the form $\cir{\widetilde{q}^{(\infty)}_i}$, with slope $\alpha^{(\infty)}_i - \beta^{(\infty)}_i$ and multiplicity $n^{(\infty)}_i$. 
\end{itemize}
Adding the contribution of those circles to the rank gives the result.
\end{proof}

\begin{remark}
Notice that this is consistent with the formula given by Malgrange \cite[p. 79]{malgrange1991equations} for the rank of the Laplace transform of modules over the Weyl algebra.
\end{remark}

\subsection{Symplectic transformations}

The Fourier-Laplace transform is part of a larger group of transformations acting on modules over the Weyl algebra. Indeed, to any matrix
\[ A=\begin{pmatrix}
a & b\\
c & d
\end{pmatrix}
\]
in $\SL_2(\C)$, we can associate an automorphism of the Weyl algebra $A_1=\C[z]\cir{\partial_z}$ given by $z\mapsto az+b\partial_z, \partial_z \mapsto cz+d\partial_z$. This induces an action of the group $\SL_2(\C)$ of symplectic transformations on modules over the Weyl algebra (see \cite{malgrange1991equations}). 

The group of symplectic transformations is generated by three types of elementary transformations:
\begin{itemize}
\item The Fourier-Laplace transform $F$, corresponding to the matrix $\begin{pmatrix}
0 & 1\\
-1 & 0
\end{pmatrix}.
$
\item Twists at infinity $T_{\lambda}$, for $\lambda\in \C$, corresponding to the matrix $\begin{pmatrix}
1 & \lambda\\
0 & 1
\end{pmatrix}.
$
\item Scalings $S_\lambda$, for $\lambda\in \C^*$, corresponding to the matrix $\begin{pmatrix}
\lambda^{-1} & 0\\
0 & \lambda
\end{pmatrix}.
$
\end{itemize}

The geometric interpretation of twists and scalings is the following. The twist $T_\lambda$  corresponds to taking the tensor product with the rank one module $(\C[z],\partial_z+\lambda z)$, and the scaling $S_\lambda$ corresponds to do the change of variable $z\mapsto z/\lambda$ on $\Pbb^1$. 

As for the Fourier transform, any element of $\SL_2(\C)$ induces a transformation on modules over the Weyl algebra, and on irreducible connections on Zariski open subsets of the affine line which are not rank one connections with only a singularity at infinity of order less than two. Moreover, there exists again a corresponding formal transformation (that we will also denote by $A$) on modified formal data, such that the diagram
\[
\begin{tikzcd}
M \arrow[r,"A"] \arrow[d] & M' \arrow[d]\\
(\bm{\breve\Theta},\bm{\breve\Ccal}) \arrow[r,"A"] & (\bm{\breve\Theta'},\bm{\breve\Ccal}')
\end{tikzcd}
\]
commutes. To show this, it is enough to check this for elementary transformations. We have already dealt with the case of the Fourier transform, and for the twists and scalings we have:

\begin{proposition} Let $M$ a module over the Weyl algebra, $\breve V\to \Ical$ its modified formal local system, and $(\bm{\breve\Theta},\bm{\breve\Ccal})$ its isomorphism class.
\begin{itemize}
\item Let $t_\lambda : \Ical_\infty\to \Ical_\infty$ be the homeomorphism of $\Ical_{\infty}$ defined by $q_U(1/z)\mapsto q_U(1/z)+\frac{\lambda}{2}z^2$, where $q_U$ is a section of $\Ical_\infty$ over an open sector $U\subset \partial_\infty$. We extend $t_\lambda$ to an homeomorphism of $\Ical$ by having it act trivially on $\bigsqcup_{a\in \C}\Ical_a$. Then the modified formal local system associated to $T_\lambda \cdot (E,\nabla)$ is isomorphic to $(t_\lambda)_* \breve V$. 
\item For $\lambda\in \C^*$, let $s_\lambda:\Ical \to \Ical$ be the homeomorphism of $\Ical$ defined in the following way: for $a\in \C$, if $q_{U_a}(z-a)$ is a section of $\Ical_a$ on the open sector $U\subset \partial_a$, its image by $s_\lambda$ is the section $q_{U_a/\lambda}(\lambda(z-a/\lambda))$ over the sector $U_a/\lambda\subset \partial_{a/\lambda}$ which is the image of $U$ by $z\mapsto z/\lambda$. If $q_U(1/z)$ is a section of $\Ical_{\infty}$, its image is $q_{U/\lambda}(\frac{1}{(\lambda z)})$. The modified formal local system associated to $S_\lambda \cdot (E,\nabla)$ is isomorphic to $(s_\lambda)_*(\breve V)$.
\end{itemize}
\end{proposition}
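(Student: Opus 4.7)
The plan is to treat the two statements separately, since the twist $T_\lambda$ and the scaling $S_\lambda$ are distinct elementary generators. For the twist I would use the interpretation (recalled just before the proposition) that $T_\lambda\cdot (E,\nabla)\cong (E,\nabla)\otimes L_\lambda$, where $L_\lambda=(\C[z],\partial_z+\lambda z)$; for the scaling I would use the interpretation that $S_\lambda\cdot (E,\nabla)$ is the pullback of $(E,\nabla)$ along the coordinate change $z\mapsto z/\lambda$ on $\Pbb^1$. In both cases I would compute the formal data of the transformed connection via the Fabry--Hukuhara--Turrittin--Levelt theorem and compare with the prescribed pushforward of $\breve V$.

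For the twist, the rank-one connection $L_\lambda$ is regular on $\C$ and has a single irregular singularity at $\infty$, whose formal local system is the $\Ical_\infty$-graded local system of rank one supported on the circle $\cir{\tfrac{\lambda}{2}z^{2}}_\infty$, with trivial formal monodromy (the exponent is unramified with no logarithmic part). The main point to establish is that tensoring two $\Ical_\infty$-graded local systems acts on Turrittin decompositions by tensoring graded pieces and adding the corresponding exponents: the piece supported on $\cir{q}_\infty$ tensored with the piece supported on $\cir{\tfrac{\lambda}{2}z^{2}}_\infty$ is naturally identified with the piece supported on $\cir{q+\tfrac{\lambda}{2}z^{2}}_\infty$, with monodromies multiplied. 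Because the monodromy of $L_\lambda$ at infinity is trivial, each graded piece of $\breve V$ is transported unchanged to the shifted circle, which is exactly $(t_\lambda)_*\breve V$. Regularity of $L_\lambda$ at every finite point gives the unchanged picture at each $a\in \C$, matching the triviality of $t_\lambda$ on $\bigsqcup_{a\in \C}\Ical_a$. Finally $t_\lambda$ is well defined at the level of circles rather than individual polynomials, because $\tfrac{\lambda}{2}z^{2}$ is Galois-invariant under each $\mu_r$-action on $z^{1/r}$, hence uniform across each Galois orbit representing a circle.

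For the scaling, the map $\phi_\lambda(z):=z/\lambda$ is a biholomorphism of $\Pbb^1$ fixing $\infty$ and sending $a$ to $a/\lambda$; pullback along $\phi_\lambda$ gives an equivalence between the categories of connections on $\Pbb^1\smallsetminus\{a_k\}$ and on $\Pbb^1\smallsetminus\{a_k/\lambda\}$, and the induced map on formal local systems is computed by substituting local coordinates. At a finite point $a$ the local coordinate $z-a$ becomes $\lambda(z-a/\lambda)$ after pullback, giving the prescription in the definition of $s_\lambda$ on circles at finite distance; at infinity the substitution $1/z\mapsto 1/(\lambda z)$ gives the corresponding formula. Formal monodromies are transported by functoriality of pullback, and the child construction $V\leadsto \breve V$ commutes with pullback along a biholomorphism, so the statement for $\breve V$ follows from that for $V$. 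The main obstacle is the twist case: the intuitive slogan ``tensoring shifts exponentials'' has to be checked carefully at the level of $\Ical_\infty$-graded local systems, tracking the Galois action of the ramification groups and the identification of fibres after tensoring; once this formal statement is established, both parts of the proposition reduce to a direct unwinding of the definitions of $t_\lambda$ and $s_\lambda$.
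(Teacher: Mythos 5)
Your proposal is correct and follows essentially the same route as the paper: both parts rest on the interpretation of $T_\lambda$ as tensoring with the rank one connection $(\C[z],\partial_z+\lambda z)$ (regular at finite distance, exponential factor $\tfrac{\lambda}{2}z^2$ at infinity, trivial formal monodromy) and of $S_\lambda$ as the coordinate change $z\mapsto z/\lambda$, then unwinding the effect on exponential factors and formal monodromies. Your extra remarks on Galois well-definedness of $t_\lambda$ and on the compatibility of the child construction with pullback are details the paper's proof leaves implicit, but they do not change the argument.
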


We thus define $T_\lambda\cdot (\bm{\breve\Theta},\bm{\breve\Ccal})$ as the isomorphism class of $(t_\lambda)_*(\breve V)$ for $\lambda\in \C$, and $S_\lambda\cdot (\bm{\breve\Theta},\bm{\breve\Ccal})$ as the isomorphism class of $(s_\lambda)_*(\breve V)$ for $\lambda\in \C^*$.

\begin{proof} The twist $T_{\lambda}$ consists of tensoring $M$ with the rank one connection $(\mathcal{O},d+\lambda z dz)$, having a third order pole at infinity and no other singularity. Therefore, $T_{\lambda}$ only modifies the formal data of $M$ at infinity. Since $\lambda z dz=-\lambda \frac{dz_\infty}{z_\infty^3}$, if $A$ is the matrix of the formalization at infinity $\widehat{M}_\infty$ in some basis, i.e. $\widehat{M}_\infty$ corresponds $\partial_{z_\infty}+A$ in this basis, then the matrix of the formalization of $T_\lambda\cdot M$ at infinity in the same basis is $A-\frac{\lambda}{z_\infty^3}$. Integrating this, we obtain that the exponential factors at infinity of $T_\lambda \cdot M$ are obtained from the exponential factors of $M$ by adding $\frac{\lambda}{2 z_\infty^2}$. The formal monodromies do not change.

The scaling $S_\lambda$ corresponds to making the change of variable $z'=\lambda z$ The singularities of $S_\lambda\cdot M$ are thus the images of the ones of $M$ by $z\mapsto z/\lambda$, and the exponential factors of $S_\lambda\cdot M $ are obtained from the ones of $M$ by expressing them as a function of $z'$, i.e. by substituting $z$ with $\lambda z'$.
\end{proof}

As for the Fourier transform, if $A\cdot (\bm{\breve\Theta},\bm{\breve\Ccal})=(\bm{\breve\Theta'},\bm{\breve\Ccal'})$ the irregular class $\bm{\breve\Theta'}$ only depends on $\bm{\breve\Theta}$, so we set $A\cdot \bm{\breve\Theta}=\bm{\breve\Theta'}$. For any circle $\cir{q}$, there is also a well-defined circle $A\cdot\cir{q}\in \pi_0(\Ical)$ such that $\cir{q}$ is an active circle of $\bm{\breve{\Theta}}$ if and only if $A\cdot\cir{q}$ is an active circle of $\bm{\breve{\Theta}'}$. If $\bm{\breve{\Theta}}=n_1\cir{q_1}+ \dots n_r \cir{q_r}$, we have 
\[
\bm{\breve{\Theta}'}=A\cdot \bm{\breve{\Theta}}=n_1 A\cdot \cir{q_1}+\dots +n_r A\cdot \cir{q_r}.
\]
The circle $A\cdot\cir{q}$ can be explicitly determined in practice by factorizing $A$ as a product of elementary operations.

\section{Invariance of the diagram for one irregular singularity}
\label{section_invariance of the diagram}

In this section, we discuss the properties of the diagram associated by \cite{boalch2020diagrams} to a connection of the Riemann sphere with only one irregular singularity at infinity. We show that the diagram is invariant under Fourier-Laplace transform, and under $\SL_2(\C)$ transformations. To do this, we compute an explicit formula for the number of edges and loops of the diagram, as well as a formula for the terms appearing in the Legendre transform of an exponential factor.

\subsection{Diagram for one irregular singularity}
\label{sec:def_diagram_infinity}

Let $(E,\nabla)$ be an irreducible algebraic connection on the affine line. The only singularity is at infinity. Its formal local system $V^0\to \Ical$ (which is also the modified formal local system) has support on $\Ical_{\infty}$. Let $(\Theta_\infty,\bm{\Ccal_\infty})$ denote its formal data. Let us fix a direction $d\in \partial_\infty$. We set $G:=\GL(V_d)$ and $H:=\GrAut(V^0_d)$ the set of graded automorphisms of $V_d$. Recall \cite{boalch2015twisted} that the conjugacy class $\Ccal_\infty$ of the monodromy of $V^0\to \partial_\infty$ is an element of a twist $H(\partial)$ of $H$. The wild character variety associated to $(E,\nabla)$ is in this case a multiplicative symplectic quotient
\begin{equation}
\mathcal{M}_B(E,\nabla)=\mathcal{M}_B(\Theta_\infty,\bm{\Ccal}_\infty)=\Hom_\mathbb{S}(V^0)\sslash_{\mathcal{C}}H, 
\end{equation}
where 
\begin{equation}
\Hom_\mathbb{S}(\Theta_\infty)=\mathcal{A}(V^0)\sslash G,
\end{equation}
is a twisted quasi-Hamiltonian $H$-space, itself obtained by symplectic reduction with respect to $G$ from the twisted quasi-Hamiltonian $G\times H$-space 
\begin{equation}
\mathcal{A}(V^0)=H(\partial)\times G \times \prod_{d\in \mathbb{A}}\Sto_d(V^0).
\end{equation}

We denote by $\cir{q_1}, \dots,\cir{q_r}$ the active circles, and $\beta_i:=\ram(q_i)$. 

Let us recall the definition of the diagram $\Gamma^\infty(E,\nabla)$ associated in \cite{boalch2020diagrams} to any such connection $(E,\nabla)$. The diagram has the following structure: it consists of a core diagram $\Gamma^\infty_c(E,\nabla)=\Gamma_c(\Theta_\infty)$, which only depends on the irregular class $\Theta_\infty$, to which are then glued  \textit{legs} encoding the conjucacy classes $\bm \Ccal_\infty$.

\begin{definition} The \textit{core diagram} $\Gamma_c^\infty(\Theta_\infty)$ associated to the irregular class $\Theta_\infty$ has a set of nodes $N_c$ given by the set of active circles $\cir{q_1},\dots,\cir{q_r}$, and for $i,j=1,\dots,r$, the number of arrows $B_{ij}$ between $\cir{q_i}$ and $\cir{q_j}$ is given by 
\begin{itemize}
\item If $i\neq j$ then 
\begin{equation}
B_{ij}=A_{ij}-\beta_i\beta_j,
\end{equation}
where $A_{ij}=\Irr(\Hom(\cir{q_i},\cir{q_j})$.
\item If $i=j$, the number of oriented loops at $\cir{q_i}$ is given by
\begin{equation}
B_{ii}=A_{ii}-\beta_i^2+1.
\end{equation}
\end{itemize}
\end{definition}
The numbers of edges/loops may be negative. One has $B_{ij}=B_{ji}$ and we will show that $B_{ii}$ is always an even number, so we can group the arrows two by two to get an unoriented diagram. 
The motivation for this definition comes from counting the number of appearances of blocks between graded parts of $V^0$ associated to the  different active circles in the explicit presentation of the wild character variety. The positive terms in $B_{ij}$ correspond to the matrix blocks in $\mathcal{A}(V^0)$, and the negative terms in the $B_{ij}$ correspond to the relations given by the quasi-Hamiltonian reduction. 

The core diagram only depends on the irregular class $\Theta_\infty$, but it does not take into account the monodromies of the active circles. This can be done  by adding legs to the core diagram encoding the conjugacy classes of the monodromies.  This involves choosing a minimal \textit{marking} $\bm\xi$ of $\bm{\Ccal_\infty}$.

\begin{definition}
\label{def:marking}
If $\mathcal C\in \GL_n(\mathbb C)$  is a conjugacy class, a\textit{ marking} of $\Ccal$ is an ordered let $\xi=(\xi_1,\dots, \xi_k)$ such that the polynomial $\prod_i (X-\xi_i)$ annihilates any element of $\Ccal$. A marking is \textit{minimal} if is corresponds if it is of minimal size, i.e. if it corresponds to the minimal polynomial of $\Ccal$. 
\end{definition}

In turn, a (minimal) marking of $\bm{\Ccal_\infty}$ is the data of a (minimal) marking of the conjugacy class $\Ccal_i\subset \GL_{n_i}(\mathbb C)$ for each active circle $i\in N_c$.

\begin{definition}
\label{def:leg}
Let $(\Ccal, \xi)$ be a pair, with $\mathcal C\in \GL_n(\mathbb C)$ a conjugacy class and $\xi=(\xi_1,\dots, \xi_k)$ a marking of $\Ccal$. The \textit{leg} $\mathbb L_{\Ccal,\xi}$ associated to $(\Ccal, \xi)$ is a linear quiver with $k$ nodes $v_1,\dots,v_k$, and dimension vector $\mathbf d_{\Ccal, \xi}=(d_1,\dots, d_k)$ is the dimension vector for $\mathbb L_{\Ccal,\xi}$ defined by
\[
d_i:=\rank(M-\xi_1)\dots(M-\xi_{i-1})\in \N,
\]
for any $M\in \mathcal C$. 
\end{definition}

Notice that if we only consider minimal markings the leg $\mathbb L_{\Ccal,\xi}$ is independent of the choice of marking.

\begin{definition} 
\label{def:full_diagram_infinity}
 The \textit{full diagram} $\Gamma^{\infty}(\Theta_\infty,\bm{\Ccal}_\infty)$ associated to $(\Theta_\infty,\bm{\Ccal}_\infty)$ is the diagram obtained by gluing for each active circle $i$ to the corresponding vertex of $\Gamma_c^{\infty}(\Theta_\infty,\bm{\Ccal}_\infty)$ the leg $\mathbb{L}_i$ given by any choice of minimal marking of $\Ccal_i$. 
\end{definition}

Furthermore, any choice of a minimal marking $\bm \xi_\infty$ of $\bm \Ccal_\infty$, i.e. the choice of a minimal  marking of $\Ccal_i$ for each active circle $i$ defines a dimension vector $\mathbf{d}\in \Z^N$ for $\Gamma^\infty(E,\nabla)$, where $N$ denotes the set of nodes of the full diagram.\\

This recipe only enables us to define a diagram for connections with only one singularity at infinity. Our goal is to define a diagram for the general case, that is for connections with an arbitrary number of singularities. The task is not straightforward: although it is possible to draw a diagram corresponding to the formal data at each singularity, it is not clear that there is a natural way to somehow glue those diagrams together to define a global diagram having good properties. In other words, the way in which the different singularities should ``interact'' is not clear. 

The idea of our construction, already present in \cite{boalch2012simply}, is thus to use the Fourier transform to reduce to the case where all active circles are at infinity, where we know how to draw the diagram. Indeed, the Fourier transform sends to infinity all active circles at finite distance. However, the Fourier transform also sends to finite distance some of the active circles at infinity, and we are back to the problem we started with. For this reason, we need before taking the Fourier transform to apply some operation to prevent the active circles at infinity to go to finite distance. This can be achieved by using more general $\SL_2(\C)$ transformations on modules over the Weyl algebra. As we shall see, any matrix in an open dense subset of $\SL_2(\C)$ sends all active circles to infinity. 

We thus want to define the diagram associated to a connection as the diagram associated to its image under a generic $\SL_2(\C)$ transformation. For this to be well--defined however, the diagram must not depend on the choice of $\SL_2(\C)$ transformation to bring all active circles to infinity, in other words the diagram has to be invariant under $\SL_2(\C)$ transformations. The goal of this section is to show that it is indeed the case. The main step will be to show that the diagram $\Gamma^\infty(\Theta_\infty,\bm\Ccal_\infty)$ is invariant under Fourier transform. 

\subsection{An explicit formula for the number of edges}
\label{subsection_explicit_formula}

Let $\cir{q}$ and $\cir{q'}$ be two exponential factors at infinity, with ramification orders $\beta$, and $\beta'$, with slopes $\alpha/\beta$ and $\alpha'/\beta'$. We set
\[ q=\sum_{j=0}^{p} b_j z_\infty^{-\alpha_j/\beta},\qquad q'=\sum_{j=0}^{p'} b'_j z_\infty^{-\alpha'_j/\beta'}
\]
the expression for $q$ and $q'$ where we write only the monomials with non-zero coefficients, i.e. $b_j\neq 0$ and $b'_j\neq 0$,  $\alpha_0 > \dots > \alpha'_p$ and $\alpha'_0 > \dots > \alpha'_{p'}$. 
Because of ramification, a monomial appearing both in $q$ and $q'$ can give rise to Stokes arrows associated to the difference $q-q'$, since there are several leaves on the covers $I:=\cir{q}$ and $I':=\cir{q'}$. 
The leaves of $\cir{q}$ correspond to the images $q$ under the action of the Galois group isomorphic to $\Z/\beta\Z$ arising from ramification, i.e. to the polynomials
\[ q_i=\sum_{j=0}^{p}b_j \omega^{-\alpha_j}z_\infty^{-\alpha_j/\beta}, \quad i=0,\dots,\beta-1,\]
with $\omega=e^{2i\pi/\beta}$. In a similar way, the leaves of $\cir{q'}$ correspond to the polynomials
\[ q'_i=\sum_{j=0}^{p'}b'_j \omega'^{-\alpha'_j}z_\infty^{-\alpha'_j/\beta'}, \quad i=0,\dots,\beta'-1,\]
with $\omega'=e^{2i\pi/\beta'}$.

Let $r$ be the biggest integer such that $\sum_{j=0}^{r-1}b_j z_\infty^{-\alpha_j/\beta}$ and $\sum_{j=0}^{r-1}b'_j z_\infty^{-\alpha'_j/\beta'}$ have the same Galois orbit. 

We set 
\[ q_c:=\sum_{j=0}^{r-1}b_j z_\infty^{-\alpha_j/\beta}, \qquad q'_c:=\sum_{j=0}^{r-1}b'_j  z_\infty^{-\alpha'_j/\beta'}, \]
and 
\[ q_d:=\sum_{j=r}^{p}b_j z_\infty^{-\alpha_j/\beta}, \qquad q'_d:=\sum_{j=r}^{p} b'_j z_\infty^{-\alpha'_j/\beta'}, \]
We get a decomposition 
\[ q=q_c+q_d, \qquad q'=q'_c+q'_d,
\]
of $q$ and $q'$ as the sum of a \textit{common part} $q_c$ and a \textit{different part} $q_d$. Replacing $q$ or $q'$ by another element of their Galois orbit if necessary, we may assume that $q_c=q'_c$. In the case where $q$ and $q'$ do not have the same leading term, then $r=0$ and we have $q_c=q'_c=0$ and $q=q_d$, $q'=q'_d$. In particular, this happens when $q$ and $q'$ do not have the same slope. Otherwise, $q$ and $q'$ have a non-zero common part, in particular they have the same slope $\alpha_0/\beta=\alpha'_0/\beta'$. The slope of $q_d$ is $\alpha_{r}/\beta$, and the slope of $q'_d$ is $\alpha'_{r}/\beta'$. 
We use the usual notation $(\cdot,\cdot)$ to denote the greatest common divisor. 
With those notations now set, we are in position to state the formula for the number of edges between $I$ and $I'$.

\begin{lemma} 
\label{nb_aretes_cas_general}
\begin{itemize}
\item Assume that and $\alpha_{r}/\beta\geq \alpha'_{r}/\beta'$. Then the number of edges between $I=\cir{q}$ and $I'=\cir{q'}$ is
\begin{align*}
B_{I,I'}= &(\beta'-(\alpha'_0,\beta'))\alpha_0 +((\alpha'_0,\beta') - (\alpha'_0,\alpha'_1,\beta'))\alpha_1  + \dots +((\alpha'_0,\dots, \alpha'_{r-2},\beta') - (\alpha'_0,\dots,\alpha'_{r-1},\beta'))\alpha_{r-1} \\&+(\alpha'_0,\dots,\alpha'_{r-1},\beta') \alpha_{r} - \beta\beta'.
\end{align*}
\item In particular, if $q$ and $q'$ have no common parts and $\alpha/\beta\geq \alpha'/\beta'$, then 
\begin{align*}
B_{I,I'}= \beta'(\alpha - \beta).
\end{align*}
\end{itemize}

\end{lemma}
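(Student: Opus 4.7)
The approach is to compute $A_{I,I'} := \Irr(\Hom(\cir{q},\cir{q'}))$ directly and then subtract $\beta\beta'$ to get $B_{I,I'}$. Viewing $\Hom(\cir{q},\cir{q'})$ as an $\Ical_\infty$-graded local system on $\partial_\infty$, it has a one-dimensional graded piece attached to each pair of leaves $(i,j)\in\{0,\ldots,\beta-1\}\times\{0,\ldots,\beta'-1\}$, labeled by the polynomial $q'_j - q_i$. Since the irregularity of such a graded local system adds the slopes of its one-dimensional pieces, we get
\[
A_{I,I'} \;=\; \sum_{i=0}^{\beta-1}\sum_{j=0}^{\beta'-1}\slope(q'_j - q_i).
\]

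The next step is to analyze $\slope(q'_j - q_i)$ pair by pair in a common variable $u = z^{-1/L}$ with $L = \lcm(\beta,\beta')$. Let $\zeta$ and $\zeta'$ be primitive $\beta$-th and $\beta'$-th roots of unity. For the common part ($s\leq r-1$), the coefficient of $u^{\alpha_s L/\beta}$ in $q'_j - q_i$ equals $b_s\bigl((\zeta')^{j\alpha'_s} - \zeta^{i\alpha_s}\bigr)$; writing $\alpha_s = (\beta/g)m_s$, $\alpha'_s = (\beta'/g)m_s$ with $g = \gcd(\beta,\beta')$, which is forced by the matched slopes $\alpha_s/\beta = \alpha'_s/\beta'$, this vanishes precisely when $(i-j)m_s \equiv 0 \pmod g$. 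Let $S_k$ denote the number of pairs $(i,j)$ for which all coefficients corresponding to $s = 0,\ldots,k$ vanish; counting residue classes modulo $g/\gcd(g,m_0,\ldots,m_k)$ yields
\[
S_k \;=\; \beta \cdot \gcd(\beta',\alpha'_0,\ldots,\alpha'_k).
\]

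Using the assumption $\alpha_r/\beta \geq \alpha'_r/\beta'$ together with the maximality of $r$, I would then check that every pair contributing to $S_{r-1}$ has $\slope(q'_j - q_i) = \alpha_r/\beta$: in the strict case the monomial $u^{\alpha_r L/\beta}$ only receives the nonzero contribution $-b_r\zeta^{i\alpha_r}$ coming from $-q_i$; in the borderline case $\alpha_r/\beta = \alpha'_r/\beta'$, any equality $b_r\zeta^{i\alpha_r} = b'_r(\zeta')^{j\alpha'_r}$ on a pair counted by $S_{r-1}$ would allow an additional matching monomial to be absorbed into the common part, contradicting the maximality of $r$. Consequently $\#\{(i,j):\slope(q'_j-q_i)=\alpha_k/\beta\} = S_{k-1}-S_k$ for $0\leq k\leq r-1$ (with $S_{-1} = \beta\beta'$), while the remaining $S_{r-1}$ pairs all have slope $\alpha_r/\beta$.

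Substituting and performing an Abel summation on
\[
A_{I,I'} \;=\; \sum_{k=0}^{r-1}\frac{\alpha_k}{\beta}(S_{k-1}-S_k) + \frac{\alpha_r}{\beta}S_{r-1}
\]
together with the formula for $S_k$ telescopes directly into the stated expression for $A_{I,I'}$, and $B_{I,I'} = A_{I,I'} - \beta\beta'$ finishes the first bullet. The second bullet (no common part) is simply the degenerate case $r = 0$, where $S_{-1} = \beta\beta'$ is the only nonempty term and one recovers $A_{I,I'} = \beta'\alpha$, hence $B_{I,I'} = \beta'(\alpha-\beta)$. I expect the main technical obstacle to lie in the borderline subcase $\alpha_r/\beta = \alpha'_r/\beta'$, where the formula only survives because the maximality of $r$ excludes any accidental cancellation of the $r$-th coefficient on the pairs counted by $S_{r-1}$; the GCD bookkeeping in the evaluation of $S_k$ is similarly a bit delicate but essentially routine.
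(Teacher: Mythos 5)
Your proposal is correct and follows essentially the same route as the paper's Appendix A: both start from $\Irr(\Hom(\cir{q},\cir{q'}))=\sum_{i,j}\slope(q'_j-q_i)$, detect cancellation of each common monomial via roots of unity, and count the surviving pairs by gcds (your count over all $\beta\beta'$ pairs is equivalent to the paper's reduction to $\gcd(\beta,\beta')$ diagonal-orbit representatives). If anything, you are slightly more careful than the paper in the borderline subcase $\alpha_r/\beta=\alpha'_r/\beta'$, where you explicitly rule out cancellation of the leading coefficients of the different parts using the maximality of $r$ — a point the paper passes over silently.
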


We have a similar result for the number of loops at a circle $\cir{q}$. 

\begin{lemma}
Let $q=\sum_{j=0}^{p} b_j z_\infty^{-\alpha_j/\beta}$ be an exponential factor of slope $\alpha_0/\beta>1$ as before.

\begin{itemize}
\item One has
\begin{equation}
B_{I,I}=(\beta-(\alpha_0,\beta))\alpha_0 +((\alpha_0,\beta)-(\alpha_0,\alpha_1,\beta))\alpha_1 +\dots
+((\alpha_0,\dots,\alpha_{p-1},\beta)-(\alpha_0,\dots,\alpha_p,\beta))\alpha_p-\beta^2+1.
\end{equation}
\item In particular, if $(\alpha,\beta)=1$, then we have 
\begin{equation}
B_{I,I}=(\beta-1)(\alpha-\beta-1).
\end{equation}
\end{itemize}
\end{lemma}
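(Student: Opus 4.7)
The starting point is the definition $B_{I,I}=A_{I,I}-\beta^2+1$ with $A_{I,I}=\Irr(\Hom(\cir{q},\cir{q}))$, so everything reduces to computing the irregularity of the internal $\Hom$ local system on $\cir{q}$. The plan is to expand $\Hom(\cir{q},\cir{q})$ leaf by leaf: it is a rank $\beta^2$ local system whose leaves above a generic direction are indexed by ordered pairs $(i,k)\in\{0,\dots,\beta-1\}^2$, and the exponential factor on the $(i,k)$-leaf is the difference $q_i-q_k$ of the two corresponding leaves of $\cir{q}$. Summing the irregularity leaf by leaf then gives
\[
A_{I,I}=\sum_{i,k=0}^{\beta-1}\slope(q_i-q_k),
\]
where $\slope$ is measured with the common denominator $\beta$ (so each non-vanishing $q_i-q_k$ has slope $\alpha_j/\beta$ for some $j$, and the pair $(i,i)$ contributes $0$).

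The second step is to rewrite this sum in terms of the shift $d=i-k\bmod \beta$. Because $q_i-q_k=\sum_j b_j(\omega^{-i\alpha_j}-\omega^{-k\alpha_j})z_\infty^{-\alpha_j/\beta}$ only depends on $d$ through the factors $(1-\omega^{d\alpha_j})$, the slope of $q_i-q_k$ equals $\alpha_{j(d)}/\beta$ where
\[
j(d)=\min\{j\ :\ d\alpha_j\not\equiv 0\pmod\beta\}.
\]
For each fixed $d$ there are exactly $\beta$ pairs $(i,k)$ with that difference, so
\[
A_{I,I}=\sum_{d=1}^{\beta-1}\alpha_{j(d)}.
\]

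The key combinatorial step, and the only place where some care is needed, is to count the number of $d\in\{1,\dots,\beta-1\}$ for which $j(d)$ equals a given $j$. Set $g_{-1}=\beta$ and $g_j=(\alpha_0,\alpha_1,\dots,\alpha_j,\beta)$ for $j\ge 0$. A standard computation shows that the number of $d\in\{0,\dots,\beta-1\}$ satisfying $d\alpha_{j'}\equiv 0\pmod\beta$ for all $j'\le j$ is exactly $g_j$, because this system of congruences is equivalent to $(\beta/g_j)\mid d$. The minimality of $\beta$ as the ramification of $q$ forces $g_p=1$, so telescoping gives
\[
\#\{d\ :\ j(d)=j\}=g_{j-1}-g_j,
\]
from which
\[
A_{I,I}=\sum_{j=0}^{p}(g_{j-1}-g_j)\,\alpha_j.
\]
Substituting into $B_{I,I}=A_{I,I}-\beta^2+1$ gives the stated formula, using $g_p=1$ to identify $-\beta^2+1=-(g_{-1}g_p+(\beta-1))$ cleanly against the definition. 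The specialization $(\alpha,\beta)=1$ is immediate: then $g_0=1$, hence $g_j=1$ for all $j\ge 0$, only the first summand survives and yields $A_{I,I}=(\beta-1)\alpha$, so $B_{I,I}=(\beta-1)\alpha-(\beta-1)(\beta+1)=(\beta-1)(\alpha-\beta-1)$. The main obstacle is really the bookkeeping in step three; the rest is formal. This argument is also just the diagonal case of Lemma \ref{nb_aretes_cas_general}, and can be viewed as the specialization $q=q'$ of that computation, where the condition $q_c=q'_c$ becomes automatic and the \emph{difference parts} disappear entirely.
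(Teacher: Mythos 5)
Your argument is correct and follows essentially the same route as the paper's Appendix A: both express $A_{I,I}=\Irr(\End(\cir{q}))$ as $\sum_{i,k}\slope(q_i-q_k)$, reduce by shift-invariance to the $\beta$ differences $q_0-q_d$, and count how many of them have each degree $\alpha_j$ in $z^{-1/\beta}$ via the identity $\lcm\bigl(\beta/(\alpha_0,\beta),\dots,\beta/(\alpha_j,\beta)\bigr)=\beta/(\alpha_0,\dots,\alpha_j,\beta)$, yielding exactly the telescoping coefficients $g_{j-1}-g_j$ of the statement. The only blemish is the parenthetical identity $-\beta^2+1=-(g_{-1}g_p+(\beta-1))$, whose right-hand side equals $-(2\beta-1)$ rather than $-(\beta^2-1)$; but this remark is not needed, since substituting your expression for $A_{I,I}$ into the definition $B_{I,I}=A_{I,I}-\beta^2+1$ already gives the stated formula, and the coprime special case is handled correctly.
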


The proofs are a bit lenghty and are given in appendix.

\begin{lemma}
The integer $B_{I,I}$ is even. 
\end{lemma}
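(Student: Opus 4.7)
The plan is to prove evenness by a direct parity analysis of the explicit formula from the previous lemma. I would set $d_k := \gcd(\alpha_0, \dots, \alpha_{k-1}, \beta)$ with the conventions $d_0 = \beta$ and $d_{p+1} = \gcd(\alpha_0, \dots, \alpha_p, \beta)$, so that
\[ B_{I,I} = \sum_{k=0}^{p}(d_k - d_{k+1})\alpha_k - \beta^2 + 1. \]
The crucial input is that $d_{p+1} = 1$. This follows from the minimality of $\beta = \ram(q)$: if $d := \gcd(\alpha_0, \dots, \alpha_p, \beta) > 1$, then writing $z_\infty^{-\alpha_j/\beta} = (z_\infty^{-1/(\beta/d)})^{\alpha_j/d}$ would exhibit $q$ as a polynomial in $z_\infty^{-1/(\beta/d)}$, contradicting the minimality of $\beta$.

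Next, I would reduce modulo $2$, using $\beta^2 \equiv \beta$ to rewrite the target congruence as
\[ \sum_{k=0}^{p}(d_k - d_{k+1})\alpha_k + \beta + 1 \equiv 0 \pmod 2. \]
The key observation is that $d_{k+1} = \gcd(d_k, \alpha_k)$ divides $d_k$, so the parities of $d_0, d_1, \dots, d_{p+1}$ are monotone: once the sequence becomes odd it remains odd. Since it starts at $\beta$ and ends at $d_{p+1} = 1$, the sequence undergoes exactly one even-to-odd transition if $\beta$ is even, and no transition at all if $\beta$ is odd.

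Each term $(d_k - d_{k+1})\alpha_k$ is automatically even when $d_k$ and $d_{k+1}$ share a parity. At a parity transition, where $d_k$ is even and $d_{k+1}$ is odd, the relation $d_{k+1} = \gcd(d_k, \alpha_k)$ forces $\alpha_k$ to be odd (otherwise the gcd would inherit a factor of $2$ from $d_k$), so the term contributes $1 \pmod 2$. Summing yields
\[ \sum_{k=0}^{p}(d_k - d_{k+1})\alpha_k \equiv \mathbf{1}_{\beta \text{ even}} \pmod 2, \]
and combining with $\beta + 1$ gives $0$ in both parities of $\beta$, establishing the lemma. The only subtle point, which I would flag as the main pitfall, is the justification $d_{p+1} = 1$; the rest is a routine transition count.
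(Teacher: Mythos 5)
Your proof is correct and follows essentially the same route as the paper: a parity analysis of the telescoping gcd sequence in the explicit formula for $B_{I,I}$, with the two cases ($\beta$ odd/even) that the paper treats separately unified by your transition count. You also make explicit the one point the paper leaves implicit, namely that $\gcd(\alpha_0,\dots,\alpha_p,\beta)=1$ by minimality of the ramification order, which is what guarantees the final gcd is odd.
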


\begin{proof}
Let us first assume that $\beta$ is odd. Then all greatest common divisors appearing in the formula for $B_{I,I}$ are odd, so differences between two consecutive g.c.d.s are even, and all terms involving those differences are even. The sum of the remaining terms is $-\beta^2+1$ which is even, so the result follows. 

Now, let us consider the case when $\beta$ is even. Let us consider the sequence of greatest common divisors $\beta, (\alpha_0,\beta), \dots, (\alpha_0,\dots,\alpha_p,\beta)$. The first element $\beta$ is even, the last one $(\alpha_0,\dots,\alpha_p,\beta)$ is odd,  and the sequence consists first of even integers until $(\alpha_0,\dots,\alpha_{k_0},\beta)$ where $k_0$ is the smallest index $k$ such that $\alpha_k$ is odd. Starting from this element, all g.c.d.s in the sequence are odd. As a consequence, all the terms involving differences of g.c.d.s in the formula for $B_{I,I}$ are even except $((\alpha_0,\dots,\alpha_{k_0-1},\beta)-(\alpha_0,\dots,\alpha_{k_0},\beta))\alpha_{k_0}$ which is odd. The sum of the remaining terms $-\beta^2+1$ is odd, and the conclusion follows. 
\end{proof}

This proves what was a statement in \cite[p.3]{boalch2020diagrams}, and guarantees that we have a diagram with unoriented edges.

\subsection{Form of the Legendre transform}

In this paragraph, we compute the Legendre transform of an exponential factor $q$ at infinity as explicitly as possible. We determine the monomials appearing (with non-zero coefficients) in the Legendre transform.

\begin{proposition}
\label{forme_de_tilde(q)}
Let $q=\sum_{j=0}^{p} b_j z^{\alpha_j/\beta}$, with $b_j\neq 0$, be an exponential factor at infinity with ramification $\beta$. We set $\alpha:=\alpha_0$, so the slope of $q$ is $\alpha/\beta>1$. Then the exponents of $\xi$ possibly appearing with non-zero coefficients in its Legendre transform $\widetilde{q}$ are of the form $\frac{\alpha-k_1(\alpha-\alpha_1)-\dots-k_p(\alpha-\alpha_p)}{\alpha-\beta}$, with $k_1,\dots,k_p\geq 0$. More precisely, if we set $E:=\{ \gamma\in \N \;|\;\exists k_1,\dots, k_p \in \N, \gamma=k_1(\alpha-\alpha_1)+\dots+k_p(\alpha-\alpha_p) \}$
the Legendre transform has the form
\begin{equation}
\widetilde{q}(\xi)=\sum_{\substack{\gamma \in E \\ \alpha-\gamma >0}}\widetilde{b}_{\gamma}\xi^{\frac{\alpha-\gamma}{\alpha-\beta}},
\end{equation} 
where the sum is restricted to the terms such that the exponent $\frac{\alpha-\gamma}{\alpha-\beta}$ is positive. Furthermore, the coefficients $\widetilde{b}_{(\alpha-\alpha_i)}$ are non-zero for $i\geq 1$. 
\end{proposition}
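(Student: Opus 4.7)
The plan is to reduce the computation to a formal inversion in a single variable. Setting $w = z^{1/\beta}$ (for a chosen determination) turns $q = \sum_{j=0}^{p} b_j w^{\alpha_j}$ into a polynomial in $w$, and applying $d/dz$ via $dw/dz = w^{1-\beta}/\beta$ yields
\[
\phi(z) = \frac{d q}{d z} = \frac{b_0 \alpha}{\beta}\, w^{\alpha - \beta}\left( 1 + \sum_{j=1}^p c_j\, w^{-(\alpha - \alpha_j)} \right),
\qquad c_j := \frac{b_j \alpha_j}{b_0 \alpha}.
\]
Introducing the auxiliary variable $u := \xi^{1/(\alpha-\beta)}$, the relation $\xi = \phi(z)$ becomes
\[
w \;=\; C\, u \left( 1 + \sum_{j=1}^p c_j\, w^{-(\alpha-\alpha_j)} \right)^{-1/(\alpha-\beta)},
\qquad C := \left(\beta/(b_0 \alpha)\right)^{1/(\alpha-\beta)}.
\]

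The core combinatorial step is then to show that the formal solution $w(u)$ of this implicit equation has the shape
\[
w(u) = C u + \sum_{\gamma \in E,\, \gamma > 0} w_\gamma\, u^{1-\gamma},
\]
where $E$ is the additive semigroup generated by $\alpha - \alpha_1, \dots, \alpha - \alpha_p$. This follows by iterative substitution: expanding the binomial series $(1 + \cdot)^{-1/(\alpha-\beta)}$ produces exponents of the form $-k_1(\alpha-\alpha_1) - \cdots - k_p(\alpha-\alpha_p)$ in $w$, and reinserting these lower-order corrections into the right-hand side only multiplies such expressions together. Since $E$ is closed under addition, no new exponents outside $\{1 - \gamma : \gamma \in E\}$ can ever appear. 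Raising to the $\beta$-th power, $z = w^\beta$ has exponents in $\{\beta - \gamma : \gamma \in E\}$. Now one uses the standard identity $\widetilde{q}'(\xi) = -\phi^{-1}(\xi) = -z$, which is immediate from differentiating the definition $\widetilde{q}(\xi) = q(\phi^{-1}(\xi)) - \xi \phi^{-1}(\xi)$ and using $q'(z) = \xi$. Integrating $-z\, d\xi = -(\alpha-\beta) z\, u^{\alpha-\beta-1}\, du$ term-by-term converts the exponents $\beta - \gamma$ in $z$ to exponents $\alpha - \gamma$ in $\widetilde{q}$, which in $\xi$ are precisely $(\alpha-\gamma)/(\alpha-\beta)$; keeping only the positive exponents (discarding the constant of integration, as well as any logarithmic term that would arise in the measure-zero situation $\gamma = \alpha$) gives the claimed form.

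For the non-vanishing statement, the first non-leading correction in $w$ is computed directly from the above implicit relation by expanding to first order:
\[
w(u) = C u \;-\; \frac{1}{\alpha-\beta} \sum_{j=1}^{p} c_j\, C^{1-(\alpha-\alpha_j)}\, u^{1-(\alpha-\alpha_j)} \;+\; (\text{higher-order terms in } E).
\]
The coefficient of $u^{1-(\alpha-\alpha_i)}$ is non-zero exactly because $c_i \neq 0$ (as $b_i \neq 0$ and we may restrict to $\alpha_i \neq 0$, so that the corresponding exponent $\alpha_i/(\alpha-\beta)$ is positive and lies in the sum). Raising to the $\beta$-th power multiplies this coefficient by $\beta C^{\beta-1} \neq 0$, and integrating multiplies by $-(\alpha-\beta)/\alpha_i \neq 0$; thus $\widetilde{b}_{\alpha-\alpha_i} \neq 0$, as required.

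The main obstacle is purely bookkeeping: one must verify carefully that no cancellations or unexpected exponents occur during the implicit inversion, which is exactly what the semigroup property of $E$ rules out. The computation for the non-vanishing also requires that distinct monomials $u^{1-(\alpha-\alpha_i)}$ in the first-order correction to $w$ do not collide (which they cannot, since the $\alpha_i$ are pairwise distinct) and that the subsequent operations $w \mapsto w^\beta \mapsto \int \cdot\, d\xi$ preserve non-vanishing of these specific coefficients, which is straightforward given the leading term $w \sim Cu$ is non-zero.
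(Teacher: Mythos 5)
Your argument for the shape of the exponents is essentially the paper's: both proofs invert $\xi=q'(z)$ as a formal Puiseux series whose exponents are forced into the additive semigroup $E$ by iterative substitution, and then integrate the identity $\widetilde{q}\,'(\xi)=-z$. Passing to the uniformizers $w=z^{1/\beta}$ and $u=\xi^{1/(\alpha-\beta)}$ is only a cosmetic repackaging of the paper's ansatz $z=\sum_{\gamma\in E}c_\gamma\xi^{(\beta-\gamma)/(\alpha-\beta)}$; the key point, closure of $E$ under addition, is used identically, and this part of the proposal is sound.

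The gap is in the non-vanishing of $\widetilde{b}_{\alpha-\alpha_i}$. You read this coefficient off the \emph{first} iteration of the implicit equation, i.e.\ from $w(u)=Cu-\frac{1}{\alpha-\beta}\sum_j c_jC^{1-(\alpha-\alpha_j)}u^{1-(\alpha-\alpha_j)}+\cdots$, and you only rule out collisions \emph{among} the first-order monomials. But the exponent $1-(\alpha-\alpha_i)$ can also be hit by the ``higher-order terms in $E$'': whenever $\alpha-\alpha_i$ is a nontrivial sum of smaller generators (e.g.\ $q=z^{5/2}+z^{3/2}+z^{1/2}$, where $\alpha-\alpha_2=4=2(\alpha-\alpha_1)$), the quadratic contribution in $c_1$ lands on the same monomial as the linear contribution in $c_2$, and a cancellation is not excluded by what you wrote. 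The same issue recurs at the step $w\mapsto w^\beta$: cross-products of lower-order corrections whose $\gamma$'s sum to $\alpha-\alpha_i$ also contribute, so ``multiplies this coefficient by $\beta C^{\beta-1}$'' records only the leading contribution. The paper handles this point differently: it matches the coefficient of the relevant power of $\xi$ in the relation $\xi=q'(z(\xi))$ to get an exact identity of the form $0=b''_j+\frac{\alpha-\beta}{\beta}b''_0\,c'_{\alpha-\alpha_j}+(\text{terms from other decompositions of }\alpha-\alpha_j\text{ in }E)$, and the nonzero inhomogeneous term $b''_j$ is what forces $c'_{\alpha-\alpha_j}\neq 0$ once the extra decomposition terms are accounted for. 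To close your argument you need either to derive that exact identity (tracking all decompositions of $\alpha-\alpha_i$ in the semigroup, not just the trivial one), or to restrict to the case where $\alpha-\alpha_i$ is not a sum of two or more generators.
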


We refer the reader to the appendix for the proof.

\subsection{Invariance of the diagram under Fourier-Laplace transform}

We now prove the invariance of the diagram under Fourier transform.

\begin{theorem}
Let $\Theta_\infty: \pi_0(\Ical_\infty)\to \N$ be an irregular class at infinity, such that the slopes of all active circles are $>1$. Then the diagram $\Gamma^\infty(\Theta_\infty)$ is invariant under Fourier-Laplace transform. 
\end{theorem}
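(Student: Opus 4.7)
My plan is to establish the invariance by a direct combinatorial comparison, using the explicit formulas for $B_{I,J}$ and $B_{I,I}$ in Lemma \ref{nb_aretes_cas_general} (and its loop analogue) together with the description of the Legendre transform in Proposition \ref{forme_de_tilde(q)}. Both sides are expressed as telescoping-like sums of greatest common divisors of integer exponents, and invariance will follow by matching these sums term-by-term up to explicit cancellations between the ``leading coefficient'' contribution and the ``$-\beta^2+1$'' constant.

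\textbf{Diagonal case.} For a single active circle $\cir{q}$ with $q = \sum_{j=0}^{p} b_j z^{-\alpha_j/\beta}$, $b_j \neq 0$, $\alpha = \alpha_0$, I would write $\widetilde{q} = \sum_{\gamma \in E} \widetilde{b}_\gamma \xi^{(\alpha-\gamma)/(\alpha-\beta)}$ with $\widetilde{\beta} = \alpha-\beta$. Two elementary observations drive the calculation. First, any numerator $\widetilde{\alpha} = (1 - \sum k_i)\alpha_0 + \sum k_i \alpha_i$ appearing in $\widetilde{q}$ that is not already some $\alpha_j$ is \emph{strictly} less than every $\alpha_j$ with $k_j>0$, hence is a $\Z$-combination of $\alpha$'s strictly larger than itself. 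Second, since $\alpha - \beta \equiv -\beta \pmod{\alpha_0}$,
\[
(\alpha_0, \dots, \alpha_k, \alpha - \beta) = (\alpha_0, \dots, \alpha_k, \beta), \qquad k \geq 0.
\]
The first observation means that when the exponents of $\widetilde{q}$ are listed in decreasing order and substituted into the loop formula, each ``new'' exponent leaves the running g.c.d.\ unchanged and therefore contributes zero to the telescoping sum; after pruning, the sum reduces to one over $\{\alpha_0,\dots,\alpha_p\}$. The second observation forces the internal g.c.d.s to coincide with the originals. Only the initial coefficient and the constant differ: the leading-term discrepancy is $(\widetilde{\beta} - \beta)\alpha_0 = (\alpha - 2\beta)\alpha$, which is exactly cancelled by $\beta^2 - \widetilde{\beta}^2 = -\alpha(\alpha-2\beta)$. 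This yields $B_{\widetilde{I},\widetilde{I}} = B_{I,I}$.

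\textbf{Off-diagonal case.} The same strategy should apply to $B_{I,J}$, but I need to track the common part $q_c = q'_c$ of $q$ and $q'$ from the decomposition $q = q_c + q_d$, $q' = q_c + q'_d$ used in Lemma~\ref{nb_aretes_cas_general}. Since the Legendre transform is determined order-by-order from the critical-point expansion, agreement of $q$ and $q'$ on their first $r$ monomials forces $\widetilde{q}$ and $\widetilde{q}'$ to share a corresponding leading part $\widetilde{q}_c$, and the first exponent at which they differ corresponds under the Legendre correspondence to the first disagreement between $q$ and $q'$. With this identification, each ``new'' exponent of $\widetilde{q}$ or $\widetilde{q}'$ (not already one of the $\alpha_i$ or $\alpha'_i$) is again a $\Z$-combination of strictly larger $\alpha$'s (or $\alpha'$'s), hence redundant in the g.c.d.\ sequence. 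Combining this with the identity $(\cdots,\alpha-\beta) = (\cdots,\beta)$, the sum for $B_{\widetilde{I},\widetilde{J}}$ reduces to the sum for $B_{I,J}$ up to a leading-term discrepancy that is cancelled by the change $-\beta_i\beta_j \mapsto -\widetilde{\beta}_i\widetilde{\beta}_j = -(\alpha_i-\beta_i)(\alpha_j-\beta_j)$, by exactly the same bookkeeping as in the diagonal case.

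The main obstacle I anticipate is the rigorous identification of the common part $\widetilde{q}_c$ of $\widetilde{q}$ and $\widetilde{q}'$ in terms of $q_c$, together with the precise correspondence of ``position of first disagreement'' in decreasing-exponent order. Although the order-by-order determination of the Legendre transform from the leading asymptotic expansion makes this correspondence intuitively clear, writing down the matching of monomials carefully (and confirming that all new exponents that arise really do lie in the $\Z$-span of strictly larger ones) is the combinatorially delicate step where the cancellation argument must be verified with care.
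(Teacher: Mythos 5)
Your proposal follows essentially the same route as the paper's proof: it combines the explicit g.c.d.\ formula for $B_{I,J}$ with the description of the exponents of the Legendre transform, observes that the running g.c.d.\ only changes at the exponents $\alpha-\alpha_j$ (all other elements of $E$ being redundant $\Z$-combinations of earlier ones), identifies the internal g.c.d.s via $(\dots,\alpha-\beta)=(\dots,\beta)$, and checks that the leading term and the $-\beta\beta'$ (resp.\ $-\beta^2+1$) constant cancel under $\beta\mapsto\alpha-\beta$. The ``delicate step'' you flag — matching the common parts of $\widetilde q,\widetilde q'$ with those of $q,q'$ and locating the first disagreement — is exactly the point the paper also handles, via the observation that the $\gamma\in E$ built only from common monomials produce the common part of the transforms.
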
 

\begin{proof} Since the stationary phase formula acts on the conjugacy class of the formal monodromies by multiplying them by $\pm 1$, it doesn't change the length of the legs. We have to show that the core diagram is invariant. The proof comes from combining lemma \ref{nb_aretes_cas_general} giving the number of edges of the core diagram, and lemma \ref{forme_de_tilde(q)} giving the structure of the Legendre transform. This allows us to compute the number of edges in the diagrams $\Gamma_c(\Theta_\infty)$ and $\Gamma_c(\Theta'_\infty)$, where $\Theta'_\infty:=F\cdot \Theta_\infty$, and check they are the same.
Keeping the same notations as before, let $I:=\cir{q}$ and $I':=\cir{q'}$ be two distinct active circles of $\Theta_\infty$. We want to compute the number of edges between their images  $\widetilde{I}:=\cir{\widetilde{q}}$ and $\widetilde{I}':=\cir{\widetilde{q}'}$. Lemma \ref{forme_de_tilde(q)} implies that $\widetilde{q}$ is of the form 
\[ 
\widetilde{q}=\sum_{\gamma\in E}\widetilde{b}_{\gamma}\xi_\infty^{-\frac{\alpha-\gamma}{\alpha-\beta}},
\]
with $E=\{\gamma\in \N \; |\; \gamma<\alpha,\; \exists k_1,\dots,k_p\geq 0, \; \gamma=k_1(\alpha_0-\alpha_1)+\dots+k_p(\alpha_0-\alpha_p)\}$, and similarly
\[ 
\widetilde{q}'=\sum_{\gamma'\in E'}\widetilde{b}'_{\gamma'}\xi_\infty^{-\frac{\alpha'-\gamma'}{\alpha'-\beta'}},
\]
with $E'=\{\gamma'\in \N \; |\; \gamma'<\alpha',\; \exists k_1,\dots,k_p\geq 0, \; \gamma'=k_1(\alpha'_0-\alpha'_1)+\dots+k_p(\alpha'_0-\alpha'_p)\}$. Let us denote by $0=\gamma_0<\dots <\gamma_N$ the distinct elements of $E$ and $0=\gamma'_0 <\dots <\gamma'_{N'}$ those of $E'$. We first determine the common parts and the different parts of $\widetilde{q}$ and $\widetilde{q}'$. The integers $\gamma$ and $\gamma'$ containing only terms common to $q$ and $q'$, i.e. of the form $\gamma=k_1(\alpha_0-\alpha_1)+\dots+k_{r-1}(\alpha_0-\alpha_{r-1})$ and $\gamma'=k_1(\alpha'_0-\alpha'_1)+\dots+k_{r-1}(\alpha'_0-\alpha'_{r-1})$, belong to the part common to $\widetilde{q}$ and $\widetilde{q'}$. Let us denote by $E_c\subset E$ and $E'_c\subset E'$ the corresponding subsets. Furthermore, the respective leading terms of the different parts of $\widetilde{q}_d$ and $\widetilde{q}'_d$ correspond to the first integers $\gamma$ where the factors $(\alpha-\alpha_{r})$ and $(\alpha'-\alpha'_{r})$ appear, which are  $\alpha-\alpha_{r}=:\gamma_{R}$, and $\alpha'-\alpha'_{r+1}=:\gamma'_{R}$ with $R\leq N$. It follows that $\widetilde{q}_d$ is of degree $\frac{\alpha_r}{\alpha-\beta}$ and $\widetilde{q}'_d$ is of degree $\frac{\alpha'_r}{\alpha'-\beta'}$. Since we have assumed $\alpha_r/\beta > \alpha'_r/\beta'$, we have $\frac{\alpha_r}{\alpha-\beta}>\frac{\alpha'_r}{\alpha'-\beta'}$. From lemma \ref{nb_aretes_cas_general}, the number of edges  between $\cir{\widetilde{q}}$ and $\cir{\widetilde{q}'}$ is
\begin{equation}
B_{\widetilde{I},\widetilde{I}'}=((\alpha'-\beta')-p'_0)\alpha_0 + \sum_{k=1}^{R-1} (p'_{k-1}-p'_{k})(\alpha-\gamma_k)+p_R (\alpha-\gamma_{R})-(\alpha-\beta)(\alpha'-\beta'),
\end{equation}
where $p'_k=(\alpha'_0,\dots,\alpha'_0-\gamma_k,\alpha'-\beta')$.

In this expression, let us examine the g.c.d.s $p'_k$ and the differences $p'_{k-1}-p'_k$.
Notice that one has $p'_{k-1}=p'_k$ as soon as any decomposition $\gamma_k=k_1(\alpha'_0-\alpha'_1)+\dots+k_p(\alpha'_0-\alpha'_p)$ of $\gamma_k$ only contains factors $(\alpha-\alpha_j)$ already present in in some $p_l$ with $l\leq k-1$. In other words, we can have $p'_{k-1}>p'_k$ only when in $\gamma_k$ a new factor $(\alpha-\alpha_j)$ appears, that is when $\gamma_k=\alpha-\alpha_j$ for some $j=1,\dots,p$.
The distinct values of $p'_k$ are thus the g.c.d.s $(\alpha'_0,\dots, \alpha'_l,\alpha'-\beta')=(\alpha'_0,\dots ,\alpha'_l ,\beta')$. Therefore, the non-zero terms involving g.c.d.s in the formula for $B_{\cir{\widetilde{q}},\cir{\widetilde{q}'}}$ are exactly the same as those for $B_{I,I'}$. The sum of the remaining terms is $\alpha'\beta-\beta\beta'=\beta(\alpha'-\beta')$ for $B_{I,I'}$, and $(\alpha'-\beta')\alpha_0-(\alpha-\beta)(\alpha'-\beta')=\beta(\alpha'-\beta')$ for $B_{\widetilde{I},\widetilde{I}'}$: they are also the same. Finally, we have the equality $B_{I,I'}=B_{\widetilde{I},\widetilde{I}'}$ of the numbers of edges. 

It remains to deal with the case of loops at $\cir{q}$ and $\cir{\widetilde{q}}$. In a similar way as the previous case, the non-zero differences of g.c.d.s appearing in the formula for $B_{\widetilde{I},\widetilde{I}}$ are exactly the same as those appearing in the formula for $B_{I,I}$. The sum of the remaining terms for $B_{I,I}$ is $\beta(\alpha-\beta)+1$, and this expression is invariant under Fourier transform since $\alpha\mapsto \alpha$ and $\beta \mapsto \alpha-\beta$. We thus have $B_{I,I}=B_{\widetilde{I},\widetilde{I}}$, which completes the proof of the invariance of the diagram.
\end{proof}

\subsection{Invariance of the diagram under symplectic transformations}

The goal of this section is to show: 

\begin{theorem}
Let $\Theta_\infty$ be an irregular class at infinity. There is an open dense subset $S_V\subset \SL_2(\C)$ such that for all $A\in S_V$, $A\cdot \Theta_\infty$ only has a singularity at infinity. Furthermore, we have $\Gamma^\infty(\Theta_\infty)=\Gamma^\infty(A\cdot \Theta_\infty)$.
\end{theorem}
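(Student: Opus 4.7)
The plan is to decompose a generic $A\in SL_2(\C)$ as a product of the elementary transformations $F$, $T_\lambda$, $S_\mu$ and to check invariance of the core diagram under each factor. The Fourier case is already the previous theorem (under the hypothesis that all active circles have slope $>1$), so it remains to establish direct invariance under twists and scalings, and to choose a factorisation along which every application of $F$ is made to a class with all slopes strictly greater than one.

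Invariance under a twist $T_\lambda$ is immediate from the formula $q_i\mapsto q_i+\tfrac{\lambda}{2}z_\infty^{-2}$: the added term is the same polynomial in $z_\infty^{-1}$ for every active circle, and is Galois invariant under any ramification cover. Consequently the pairwise differences $q_i^{(k)}-q_j^{(l)}$ of Galois conjugates are unchanged, so the $\Ical_\infty$-graded local systems $\Hom(\cir{q_i},\cir{q_j})$ and their irregularity numbers $A_{ij}$ are preserved. The ramifications $\beta_i$ are also unchanged, since no new fractional exponent is introduced, hence every $B_{ij}$ is invariant under $T_\lambda$. For a scaling $S_\mu$, the substitution $q_i(z)\mapsto q_i(z/\mu)$ only rescales coefficients, leaving slopes, ramifications and the common-part/different-part decomposition of every pair $(q_i,q_j)$ intact; the explicit formulas of Lemma \ref{nb_aretes_cas_general} and its loop counterpart then yield the same $B_{ij}$.

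For general $A=\begin{pmatrix}a & b\\ c & d\end{pmatrix}$ with $c\neq 0$, a direct matrix computation gives the factorisation $A=T_{a/c}\,S_{-c}\,F\,T_{d/c}$ (the non-trivial Bruhat cell). Twists and scalings act trivially on the point of the Riemann sphere carrying each active circle, so the only step that can move support to finite distance is the Fourier factor in the middle. The twist $T_{d/c}$ replaces each active $q_i$ by $q_i+\tfrac{d}{2c}z_\infty^{-2}$, which for every $d/c$ outside the finite set of values causing the slope of some circle to drop to $\le 1$ yields a class $T_{d/c}\cdot\Theta_\infty$ with all slopes $>1$ at infinity. On such a class the previous theorem applies, so the diagram is preserved by $F$ and the support stays at infinity (type 3 maps to type 3 in the Legendre table). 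The subsequent $S_{-c}$ and $T_{a/c}$ then clearly preserve both support and diagram. Defining $S_V$ as the complement of this finite union of hyperplanes inside the open dense Bruhat cell $\{c\neq 0\}\subset SL_2(\C)$ gives an open dense subset of $SL_2(\C)$ on which $A\cdot\Theta_\infty$ is supported at infinity, and composing the three invariance statements along the decomposition yields $\Gamma_c^{BY}(A\cdot\Theta_\infty)=\Gamma_c^{BY}(\Theta_\infty)$.

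The main subtlety is producing a factorisation in which every intermediate class stays supported at infinity and every internal Fourier step receives an input with all slopes $>1$. The Bruhat decomposition above achieves this with a single Fourier factor, and the genericity condition on $d/c$ reduces precisely to ruling out the finitely many values for which a slope-$2$ leading coefficient is cancelled in a way that lets the resulting slope fall to $\leq 1$; this finite union of hyperplanes is the only obstruction, so $S_V$ is automatically open dense.
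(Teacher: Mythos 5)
Your proof is correct and follows essentially the same route as the paper: invariance under twists and scalings is checked directly on the differences of exponential factors, and the Bruhat factorisation $A=T_{a/c}\,S_{-c}\,F\,T_{d/c}$ (the paper writes it as $T_\mu F S_\nu T_\rho$, the same up to commuting the scaling past $F$) reduces everything to a single Fourier step, at which your genericity condition on $d/c$ guarantees that all slopes are $>1$ so that the Fourier-invariance theorem applies. The only organisational difference is that the paper first normalises all active circles to slope $\le 2$ and expresses the genericity condition as $c\lambda_i+d\neq 0$ via the homography action on the slope-$2$ coefficients, but this amounts to the same finite union of excluded hypersurfaces you identify.
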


To prove this, the idea is to decompose the elements of $\SL_2(\C)$ as products of elementary operations. 

\begin{lemma} Let $\Theta_\infty$ be an irregular class at infinity. Then 
\begin{itemize}
\item For $\lambda\in \C$, $\Gamma^\infty(T_\lambda\cdot \Theta_\infty )=\Gamma^\infty(\Theta_\infty)$.
\item For $\lambda\in \C^*$, $\Gamma^\infty(S_\lambda\cdot \Theta_\infty)=\Gamma^\infty(\Theta_\infty)$.
\end{itemize}
\end{lemma}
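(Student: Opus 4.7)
The plan is to show that $T_\lambda$ and $S_\lambda$ preserve each of the ingredients entering the definition of the core diagram. Since $B_{ij}=A_{ij}-\beta_i\beta_j$ for $i\neq j$ and $B_{ii}=A_{ii}-\beta_i^2+1$, it suffices to check that the induced bijection on active circles, the ramification indices $\beta_i=\ram(q_i)$, and the irregularities $A_{ij}=\Irr(\Hom(\cir{q_i},\cir{q_j}))$ are all preserved. The bijection on active circles is provided automatically by the homeomorphisms $t_\lambda,s_\lambda$ of $\Ical_\infty$ used in the definition of $T_\lambda$ and $S_\lambda$, so I only need to check the invariance of $\beta_i$ and $A_{ij}$.

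For the twist $T_\lambda$, the action on an exponential factor at infinity is $q\mapsto q+\tfrac{\lambda}{2}z_\infty^{-2}$. The added monomial has integer exponent, so it is unramified and cannot modify the denominators of the fractional exponents appearing in $q$; the ramification $\beta_i$ of every active circle is therefore preserved. On the other hand, $\Hom(\cir{q_i},\cir{q_j})$ is determined by the differences of leaves $q_i^{(k)}-q_j^{(l)}$ (modulo the joint Galois action), and those differences are unaffected by adding a common term to $q_i$ and $q_j$. Hence $\Hom(\cir{q_i},\cir{q_j})$ is literally the same before and after, so $A_{ij}$ (and similarly $A_{ii}$) is invariant.

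For the scaling $S_\lambda$, the action is $q(z_\infty)\mapsto q(z_\infty/\lambda)$, which multiplies each monomial $b_k z_\infty^{-\gamma}$ by the nonzero scalar $\lambda^{\gamma}$ without changing its exponent. It follows that the set of exponents of $q$, and in particular its ramification, slope and irregularity, are all preserved; moreover scaling commutes with the Galois action, preserving the bijection on leaves. Each difference of leaves is rescaled in the same manner, so the exponents of its monomials, and therefore the decomposition of $\Hom(\cir{q_i},\cir{q_j})$ into Galois orbits with their individual irregularities, are preserved. Hence $A_{ij}$ is invariant in this case as well.

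Combining these observations, $B_{ij}$ is preserved by both $T_\lambda$ and $S_\lambda$, which proves the lemma. The main subtlety lies in the twist case, where the slope-$2$ coefficient of an individual $q_i$ may be cancelled or newly created by adding $\tfrac{\lambda}{2}z_\infty^{-2}$, so that the slope and irregularity of $q_i$ can jump; however, as noted this affects neither the ramification nor the differences entering $A_{ij}$, and so the core diagram is unchanged.
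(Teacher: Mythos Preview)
Your proof is correct and follows essentially the same approach as the paper: both argue that the twist leaves the differences of leaves literally unchanged (hence $A_{ij}$ is invariant), and that the scaling only rescales coefficients without altering exponents (hence the degrees of differences, and so $A_{ij}$, are preserved). Your version is slightly more explicit in separating the invariance of the ramification indices $\beta_i$ from that of $A_{ij}$, and in noting the harmless possibility that the slope of an individual $q_i$ may jump under the twist, but the underlying reasoning is the same.
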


\begin{proof}
Once again in both cases the lengths of the legs do not change, so we just have to show that the core diagrams are the same. The twist $T_\lambda$ induces via $t_\lambda$ a bijection between the active circles of $\Theta_\infty$ and $T_\lambda\cdot \Theta_\infty$.  The differences of exponential factors $q_U(z_\infty)-q'_U(z_\infty)$ are invariant under a twist $T_\lambda$, so the number of Stokes arrows between two active circles and their images by $t_\lambda$ are the same. This proves the first part of the lemma. 
In a similar way, $S_\lambda$ induces via $s_\lambda$ a bijection between the active circles of $\Theta_\infty$ and $S_\lambda \cdot \Theta_\infty$. The degree of a difference $q_U(z_a)-q'_U(z_a)$ of exponential factors at $a\in\C$ is the same as its image $q_U(\lambda(z_{a/\lambda}))-q'_U(\lambda(z_{a/\lambda}))$  by $s_\lambda$. This is also true for exponential factors at infinity. Therefore, the number of Stokes arrows between two active circles and their images by $s_\lambda$ are the same, and the conclusion follows.
\end{proof}

Since we have already seen that the Fourier transform preserves the diagram when all circles are of slope $>1$, i.e. when $F\cdot \Theta_\infty$ only has a singularity at infinity, all elementary transformations preserve the diagram. However, deducing the theorem from this by factoring matrices of $\SL_2(\C)$ as products of elementary transformations is not entirely straightforward. Indeed, if $A\in \SL_2(\C)$ admits a factorization $A=A_k\dots A_1$ as a product of elementary transformations, it may occur that for some $1\leq i\leq k$, $A_i \dots A_1\cdot \Theta_\infty$ has singularities at finite distance. Therefore, we have to show that $A$ always admits a factorization such that this doesn't happen. We will do this in several steps. 

\begin{lemma}
Let $\Theta_\infty$ be an irregular class at infinity. There exists $A\in \SL_2(\C)$ such that all active circles of $A\cdot \Theta_\infty$ have slope no greater than $2$. More precisely, there exists such a matrix of the form $FT_\lambda$, for some $\lambda\in \C$. 
\end{lemma}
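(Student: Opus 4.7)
The strategy is to factor the desired matrix as $A = F T_\lambda$ for a suitable $\lambda$, using the twist first to push every slope up to at least $2$, and then the Fourier transform to pull the slopes back down into $(1,2]$. Setting $A := F T_\lambda$ for an appropriately chosen $\lambda$ will then produce the required transformation.

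First I would analyse the effect of $T_\lambda$ on slopes. Recall that $t_\lambda$ acts on exponential factors at infinity by $q(1/z) \mapsto q(1/z) + \lambda z^2/2$, i.e.\ it adds the unramified slope-$2$ term $\lambda/(2 z_\infty^2)$ to each $q$. Three cases arise for an active circle $\cir{q}_\infty$ of slope $s$: if $s < 2$ and $\lambda \neq 0$, the new leading term is $\lambda/(2z_\infty^2)$, so the new slope equals $2$; if $s = 2$ with coefficient $c$ of $z_\infty^{-2}$ in $q$, the new coefficient is $c + \lambda/2$, which is nonzero as long as $\lambda \neq -2c$, in which case the slope remains $2$; and if $s > 2$, the leading term is unaffected, so the slope is unchanged. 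Since there are only finitely many active circles of slope $2$, choosing $\lambda \in \C^*$ outside the finite ``bad set'' $\{-2c_i\}$ ensures every active circle of $T_\lambda \cdot \Theta_\infty$ has slope $\geq 2$.

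Next I would apply $F$ and invoke Theorem~\ref{th_stationary_phase}. Since every active circle of $T_\lambda \cdot \Theta_\infty$ has slope $>1$, they are all of type 3, so their Legendre transforms remain circles at infinity of type 3. The Legendre transform sends a type-3 circle of slope $\alpha/\beta$ to one of slope $\alpha/(\alpha-\beta)$, and the elementary equivalence
\[
\frac{\alpha}{\alpha-\beta} \leq 2 \iff \alpha \geq 2\beta \iff \frac{\alpha}{\beta} \geq 2
\]
shows that every output slope lies in $(1, 2]$. Combining the two steps, $F T_\lambda \cdot \Theta_\infty$ has only a singularity at infinity with all slopes $\leq 2$, which is exactly what we want.

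The only delicate point is the possibility of cancellation: if $\lambda$ were chosen so that $\lambda/2 + c = 0$ for some slope-$2$ coefficient $c$, that circle would lose its leading term and its slope could drop below $1$, so that the subsequent Fourier transform would send it to finite distance and ruin the argument. Avoiding this is precisely the reason for the genericity of $\lambda$; as the obstruction is a finite set of affine linear conditions on $\lambda \in \C$, such a $\lambda$ always exists. No deeper structural input is needed beyond the slope formula for the Legendre transform and the explicit description of $t_\lambda$.
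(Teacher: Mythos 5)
Your proof is correct and follows essentially the same route as the paper: choose $\lambda$ outside the finite set of slope-$2$ coefficients so that $T_\lambda$ raises every slope to at least $2$, then observe that the Fourier transform sends slope $\alpha/\beta\geq 2$ to $\alpha/(\alpha-\beta)\leq 2$ while keeping these circles at infinity. Nothing is missing.
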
 

\begin{proof}
Let $\cir{q}_\infty$ be an irregular circle at infinity. Its image by $T_\lambda$ is $\cir{q+\frac{\lambda}{2}z^2}$. The term $\frac{\lambda}{2}z^2$ is of slope $2$. There are thus two possibilities: either $q$ has slope strictly greater than 2 and $\slope(q)=\slope(q+\frac{\lambda}{2}z^2)$, or $q$ has slope less than 2 and $\slope(q+\frac{\lambda}{2}z^2)=2$, if we choose $\lambda$ such that $-\lambda/2$ is not equal to the coefficient of slope 2 of $q$. We then apply the Fourier transform. In the first case, if we denote $\slope(T_\lambda\cdot \cir{q})=p/q>2$, with $p,q\in \N$, one has $p-q>q$, so from the stationary phase formula $\slope(FT_\lambda\cdot\cir{q)}=\frac{p}{p-q}<2$. In the second case we have $\slope(FT_\lambda\cdot\cir{q})=\slope(T_\lambda\cdot \cir{q})=2$. As a consequence, if we choose $\lambda$ such that $-\lambda/2$ is not equal to any of the coefficients of slope 2 of the active circles of $\Theta_\infty$, then $FT_\lambda \cdot \Theta_\infty$ only has active circles at infinity of slope no greater than 2.
\end{proof}

Let $\cir{q}$ be a circle at infinity of slope $\leq 2$. It is clear that this circle remains at infinity when a twist or a scaling is applied. It remains at infinity under Fourier transform if and only if its slope is strictly greater than 1. In particular, it remains at infinity if its coefficient of slope 2 is non-zero. 

\begin{proposition} Let $q$ be an exponential factor at infinity of slope $\leq 2$. We write $q=-\frac{\lambda}{2}z^2+ q_{<2}$ with $q_{<2}$ of slope $<2$. Let $A=\begin{pmatrix}
a & b\\
c & d
\end{pmatrix}\in \SL_2(\C)$. Let $\cir{q'}$ be the image of $\cir{q}_\infty$ by $A$. Then $\cir{q'}$ is also at infinity unless $\slope(q_{<2})\leq 1$ and $c\lambda + d=0$. In this case, writing $q'=-\frac{\lambda'}{2}+ q'_{<2}$ with $q_{<2}$ of slope $<2$, the coefficients $\lambda$ and $\lambda'$ are related by 
\begin{equation}
\lambda'=\frac{a\lambda+b}{c\lambda+d}.
\end{equation}
\end{proposition}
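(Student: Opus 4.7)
The plan is to factor any $A \in SL_2(\C)$ as a product of the standard generators---twists $T_\mu$, scalings $S_\mu$, and the Fourier transform $F$---to verify the statement on each generator separately, and then combine using the fact that $\lambda \mapsto (a\lambda+b)/(c\lambda+d)$ defines a left action of $SL_2(\C)$ on $\Pbb^1$.

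For $T_\mu$ and $S_\mu$ the matrices have $c = 0$, so the exceptional case $c\lambda + d = 0$ never arises; the explicit actions $t_\mu$ and $s_\mu$ recalled earlier in the section keep the circle at infinity, and a direct computation of the new slope-$2$ coefficient matches the homographic formula, after fixing the sign conventions in the identification of $\lambda$ with a point of the Fourier sphere.

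For the Fourier transform $F$, where $c\lambda + d = -\lambda$, three subcases arise. If $\lambda \neq 0$, then $q$ has slope exactly $2$ and a direct Legendre transform applied to the leading term $-\tfrac{\lambda}{2}z^2$ produces $\widetilde{q} = \tfrac{1}{2\lambda}\xi^2 + \cdots$, so the image lies at infinity with slope $2$ and new coefficient $\lambda' = -1/\lambda$, in agreement with the homographic formula for $F$. If $\lambda = 0$ but $\slope(q_{<2}) > 1$, the stationary phase formula sends a leading slope-$\alpha/\beta$ term of $q_{<2}$ with $1 < \alpha/\beta < 2$ to a term of slope $\alpha/(\alpha-\beta) > 2$ in $\widetilde{q}$, so the image still sits at infinity. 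If $\lambda = 0$ and $\slope(q_{<2}) \leq 1$, then $q$ has slope $\leq 1$ overall and $F$ sends $\cir{q}_\infty$ to finite distance; this is precisely the exceptional case $c\lambda+d = 0$ combined with $\slope(q_{<2}) \leq 1$.

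The main obstacle in patching the generator cases together is that intermediate circles in a factorization may have slope $> 2$, for which the decomposition $q = -\tfrac{\lambda}{2}z^2 + q_{<2}$ does not apply directly. The crux is an $SL_2(\C)$-invariance: the property ``the circle at infinity carries a term of slope $>1$ different from $2$''---which coincides with $\slope(q_{<2}) > 1$ in the slope-$\leq 2$ regime---is preserved by twists and scalings and merely swapped between slope-$(1,2)$ and slope-$>2$ content by $F$. Hence if $\slope(q_{<2}) > 1$ at the start, a ``saving'' slope term persists at every step and the circle never reaches finite distance, regardless of $A$. If instead $\slope(q_{<2}) \leq 1$, every intermediate circle that stays at infinity has only slope-$\leq 1$ and slope-$2$ content, and its position is faithfully tracked by the homographic action on $\Pbb^1 = \C \cup \{\infty\}$ of slope-$2$ coefficients, with $\lambda = \infty$ encoding finite distance. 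Composing the generator analyses then yields both $\lambda' = (a\lambda+b)/(c\lambda+d)$ and the claimed characterization of the exceptional case as $c\lambda+d = 0$ together with $\slope(q_{<2}) \leq 1$.
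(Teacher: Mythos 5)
Your proposal is correct in its conclusions and shares the paper's overall strategy (verify the statement on the generators $T_\mu$, $S_\mu$, $F$ and compose homographies), but it diverges at the crucial step of patching the generator cases together. The paper sidesteps the problem of intermediate circles of slope $>2$ or at finite distance entirely, by choosing a \emph{specific} factorization: $A=S_\nu T_\rho$ when $c=0$, and $A=T_\mu F S_\nu T_\rho$ when $c\neq 0$. In the latter the single Fourier transform is applied to the circle $S_\nu T_\rho\cdot\cir{q}$, which still has slope $\leq 2$ with explicitly computable slope-$2$ coefficient $\lambda''$ (vanishing exactly when $c\lambda+d=0$), and is followed only by a twist, which can never send a circle to finite distance; so the whole analysis reduces to the single-$F$ case already understood. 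You instead allow an arbitrary factorization and compensate with an $SL_2(\C)$-invariant (``the circle carries a term of slope $>1$ different from $2$''), which buys generality but transfers the burden onto the invariance claim: its preservation under $F$ in the case of leading slope exactly $2$ with a subleading term of slope in $(1,2)$ is not automatic --- it requires that such a term survives the Legendre transform with non-zero coefficient, which is exactly the non-vanishing statement $\widetilde{b}_{(\alpha-\alpha_i)}\neq 0$ in the proposition describing the monomials of the Legendre transform, and you should cite it explicitly rather than assert the swap. Likewise your tracking of circles that pass through finite distance (via $\lambda=\infty$ on the Fourier sphere) needs the type~4/5 $\leftrightarrow$ type~1/2 correspondences to confirm that such a circle returns to infinity with slope-$2$ coefficient $0=-1/\infty$. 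Both routes work; the paper's tailored factorization simply makes these verifications unnecessary, at the cost of a small preliminary factorization lemma.
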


To prove this, we begin with the case of elementary transformations. 

\begin{lemma}
The proposition is true if $A$ is an elementary transformation
\end{lemma}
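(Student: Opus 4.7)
The plan is to verify the proposition separately for each of the three types of elementary transformations: twists $T_\mu$, scalings $S_\mu$, and the Fourier transform $F$. In each case I will check the geometric criterion describing when $\cir{q'}$ remains at infinity, and then confirm the Möbius formula $\lambda' = (a\lambda+b)/(c\lambda+d)$ for the slope-$2$ coefficient.

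For the twist $T_\mu$ (matrix $\begin{pmatrix} 1 & \mu \\ 0 & 1 \end{pmatrix}$), the associated map $t_\mu$ recalled in the previous proposition fixes the singular support and modifies any exponential factor at infinity only by an additive term of slope $2$. Hence $\cir{q'}$ is always at infinity, the exceptional condition $c\lambda+d = 0$ never arises since $c=0$, $d=1$, and a direct reading of the slope-$2$ coefficient yields $\lambda' = \lambda + \mu$, matching the Möbius formula. The scaling $S_\mu$ (matrix $\begin{pmatrix} \mu^{-1} & 0 \\ 0 & \mu \end{pmatrix}$) is induced by a rescaling of $z$ fixing $\infty \in \Pbb^1$, so $\cir{q'}$ is again at infinity, the exception cannot hold, and substituting into $q$ and reading off the coefficient of $z^2$ gives $\lambda' = \lambda/\mu^2$, again in agreement with the Möbius formula.

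The substantive case is the Fourier transform $F$ (matrix $\begin{pmatrix} 0 & 1 \\ -1 & 0 \end{pmatrix}$), for which $c\lambda + d = -\lambda$. The exceptional condition ``$\slope(q_{<2}) \leq 1$ and $\lambda = 0$'' is precisely the condition that $q$ be of type 1 or 2 in the classification of the previous section. By the Legendre-transform correspondence recalled there, such circles go to types 5 and 4 respectively, i.e., to finite distance, matching the statement. Otherwise $q$ has slope $>1$ and is of type 3, whose Legendre transform is again of type 3, hence still at infinity. To compute $\lambda'$ in the generic case $\lambda \neq 0$ (so $q$ has slope exactly $2$), write $\phi(z) := q'(z) = -\lambda z + O(z^{<1})$, whence $\phi^{-1}(\xi) = -\xi/\lambda + \cdots$. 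Substituting into $\widetilde{q}(\xi) = q(\phi^{-1}(\xi)) - \xi\phi^{-1}(\xi)$ and retaining only the slope-$2$ contribution gives
\[
\widetilde{q}(\xi) = -\frac{\lambda}{2}\Bigl(\frac{\xi}{\lambda}\Bigr)^{2} + \frac{\xi^2}{\lambda} + \widetilde{q}_{<2}(\xi) = \frac{\xi^2}{2\lambda} + \widetilde{q}_{<2}(\xi),
\]
whence $\lambda' = -1/\lambda$, matching the Möbius formula $(0\cdot\lambda+1)/(-\lambda+0)$.

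The main obstacle is extracting the leading coefficient of the Legendre transform in the Fourier case. Once one observes that only the leading term $-\lambda z$ of $\phi$ enters the slope-$2$ coefficient of $\widetilde{q}$, the contribution of the sub-leading piece $q_{<2}$ falls entirely into $\widetilde{q}_{<2}$ and can safely be ignored in the computation of $\lambda'$.
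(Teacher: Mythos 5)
Your proof is correct and takes essentially the same route as the paper's: a case-by-case check over the three elementary transformations, with the twist and scaling handled by direct substitution and the Fourier case by extracting the slope-$2$ coefficient of the Legendre transform (where only the leading term of $\phi$ matters). Your value $\lambda'=\lambda/\mu^2$ for the scaling is the one consistent with the matrix $\diag(\mu^{-1},\mu)$, the substitution $q\mapsto q(z/\mu)$, and the M\"obius formula.
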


\begin{proof}
The twist $T_{\mu}=\begin{pmatrix}
1 & \mu\\
0 & 1\\
\end{pmatrix}$
acts on $q$ according to 
\[ T_{\mu}(q)=q-\frac{\mu}{2}z^2=-\frac{\lambda+\mu}{2}z^2+q_{<2}.
\]
This amounts to $\lambda \mapsto \lambda+\mu$, so the property is verified.
The scaling $S_{\mu}=\begin{pmatrix}
\mu^{-1} & 0\\
0 & \mu\\
\end{pmatrix}$ sends $q$ to 
\[ 
S_{\mu}(q)=q(z/\mu)=-\frac{\lambda \mu^2}{2}z^2+q_{<2}(z/\mu).
\]
This amounts to $\lambda \mapsto \lambda\mu^2$, so the property is again verified.
The Fourier transform $F=\begin{pmatrix}
0 & 1\\
-1 & 0\\
\end{pmatrix}$ acts on $q$ according to the Legendre transform: computing it explicitly we find  
\[ F(q)=\Lcal q=\frac{1}{2\lambda}x^2+\widetilde{q}_0,
\]
where $\widetilde{q}_0$ has slope $<2$. This amounts to $\lambda\mapsto -\frac{1}{\lambda}$ and the property is once again verified.
\end{proof}

The idea is then to factorize $A$ as a product of elementary operations, using the following lemma. 

\begin{lemma}
Let $A=\begin{pmatrix}
a & b\\
c & d
\end{pmatrix}\in \SL_2(\C)$, and $h_A : \Pbb^1\to \Pbb^1$ be the corresponding homography $z\mapsto \frac{az+b}{cz+d}$.  
\begin{itemize}
\item If $h_A(\infty)=\infty$, then $A$ admits a factorization as a product of elementary operations of the form 
$A=S_\nu T_\rho$, 
with $\nu\in \C^*, \rho\in \C$. 
\item Otherwise, if $h_A(\infty)\neq \infty$, $A$ admits a factorization of the form 
\begin{equation}
A= T_\mu F S_\nu T_\rho,
\end{equation}
with $\mu\in \C$, $\nu\in \C^*$, $\rho\in \C$. 
\end{itemize}
\end{lemma}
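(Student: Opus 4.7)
The plan is to prove both cases by direct matrix multiplication, exploiting that $h_A(\infty) = \infty$ is equivalent to $c = 0$. So Case 1 is the case $c = 0$ (equivalently $ad = 1$), and Case 2 is $c \neq 0$.

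For Case 1, I would simply expand
$$S_\nu T_\rho = \begin{pmatrix} \nu^{-1} & \nu^{-1}\rho \\ 0 & \nu \end{pmatrix},$$
which is upper triangular, and choose $\nu := d \in \C^*$ (so that $\nu^{-1} = a$, using $ad=1$) and $\rho := bd$. This recovers $A$ directly.

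For Case 2, I would compute the full product
$$T_\mu F S_\nu T_\rho = \begin{pmatrix} -\mu\nu^{-1} & \nu - \mu\nu^{-1}\rho \\ -\nu^{-1} & -\nu^{-1}\rho \end{pmatrix}$$
and match entries against $A$. The bottom row forces $\nu := -1/c$ (well-defined in $\C^*$ since $c \neq 0$) and then $\rho := d/c$; the top-left entry forces $\mu := a/c$. This determines the three parameters, but then the top-right entry $\nu - \mu\nu^{-1}\rho$ yields an overdetermined consistency condition, which I expect to reduce, after substitution, to $(ad-1)/c = b$, i.e.\ to the determinant relation $ad - bc = 1$.

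The only subtlety is thus this last consistency check: it is the single place where the assumption $A \in SL_2(\C)$ (rather than $GL_2$) genuinely enters. Conceptually the lemma is a Bruhat-type decomposition for $SL_2(\C)$: Case~1 describes the Borel subgroup (where the Weyl element $F$ is not required), and Case~2 the complementary big cell in which $F$ appears exactly once.
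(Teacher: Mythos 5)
Your proof is correct and is essentially the computation the paper performs: the same factorization with the same parameters $\nu=-1/c$, $\rho=d/c$, $\mu=a/c=h_A(\infty)$, verified by elementary matrix multiplication (the paper merely organizes the second case as a reduction to the first by left-multiplying by $B_\mu=F^{-1}T_{-\mu}$, which sends $h_A(\infty)$ to $\infty$, rather than expanding the four-fold product at once). The determinant relation $ad-bc=1$ enters both arguments as the same final consistency check, so there is nothing substantive to add.
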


\begin{proof} We have $h_A(\infty)=\frac{a}{c}\in \Pbb^1$, so $h_A$ fixes infinity if and only if $c=0$. This implies $a\neq 0$ and $d=a^{-1}$, and we have $A=S_\nu T_\rho$ with $\nu=a$, $\rho=b/a$. When $h_A(\infty)\neq \infty$,we can reduce to the case where $h_A$ fixes infinity. Let $\mu:= h_A(\infty)=\frac{a}{c}$. We introduce 
\[ B_\mu:=\begin{pmatrix}
0 & -1\\ 1 & -\mu
\end{pmatrix}=F^{-1}T_{-\mu},
\]
such that the corresponding homography $h_{B_\mu}$ sends $\mu$ to $\infty$. Then the homography associated to $B_\mu A$ sends $\infty$ to $\infty$. This implies that $B_\mu A$ has the form
\[ B_\lambda A=\begin{pmatrix}
\nu & \sigma\\
0 & \nu^{-1}
\end{pmatrix},
\]
with $\nu=-c$ and $\sigma=-d$, so that $B_\mu A=S_\nu T_\rho$ with $\rho=\sigma/\mu$. Finally we get $A=B_\mu^{-1}S_\nu T_\rho=T_\mu F S_\nu T_\rho$. 
\end{proof}

\begin{proof}[Proof of the proposition] If $c=0$ then $d\neq 0$ and $c\lambda+d=d\neq 0$. In this case $A$ factorizes as $A=S_\nu T_\rho$, so the circle $\cir{q'}$ is at infinity. Now if $c\neq 0$, from the lemma $A$ factorizes as $A=T_\mu F S_\nu T_\rho$. Let $\cir{q''}:=S_\nu T_\rho \cdot \cir{q}_\infty$, and write $q''=-\frac{\lambda''}{2}+q''_{<2}$ with the slope of $q''_{<2}$ being $<2$. The circle $\cir{q'}$ is at infinity if and only if $\cir{q''}$ is not sent to finite distance by the Fourier transform, i.e. using the stationary phase formula if we don't have $\lambda''=0$ and $\slope(q''_{<2})\leq 1$. From the Legendre transform $\slope(q''_{<2})=\slope(q_{<2})$. We have $S_\nu T_\rho=\begin{pmatrix}
-c & -d\\
0 & -c^{-1}
\end{pmatrix}$, so $\lambda''=-\frac{c \lambda +d}{-c^{-1}}$. Therefore we have $\lambda''\neq 0$ if and only if $c\lambda +d\neq 0$. It follows from this that, if we do not have $c\lambda+d=0$ and $\slope(q_{<2})\leq 1$, then at each step of the factorization the images of $\cir{q}$ remain at infinity. Since at each elementary operation the coefficient $\lambda$ is transformed according to the corresponding homography, the conclusion follows. 

\end{proof}

We are now in position to prove the theorem.

\begin{proof}[Proof of the theorem] Let $\Theta_\infty$ be an irregular class at infinity. Up to applying a well-chosen symplectic transformation, we may assume that all active circles of $\Theta_\infty$ have slope less than $2$. We denote by $-\lambda_i/2, i=1,\dots,r$ the coefficients of $z^2$ of these active circles. It follows from the previous proposition that for $A=\begin{pmatrix}
a & b\\
c & d
\end{pmatrix}\in \SL_2(\C)$, $A\cdot \Theta_\infty$ only has a singularity at infinity if and only if $c\lambda_i+d\neq 0$ for $i=1,\dots,m$. This corresponds to an open dense subset of $\SL_2(\C)$. Furthermore $A$ has a factorization $A= T_\lambda F S_\mu T_\nu$, such that when applying successively the elementary operations, $\Theta_\infty$ always remains with only one singularity at infinity. At each step, the diagram doesn't change, and the conclusion follows.  
\end{proof}

\begin{remark} It follows from this analysis that one may view the active circles at finite distance as circles at infinity of slope $\leq 2$ for which the coefficient of slope 2 is infinite. We may call the copy of $\Pbb^1$ where those coefficients live the \textit{Fourier sphere}, as in \cite{boalch2012simply}. Acting with symplectic transformations on circles at infinity of slope $\leq 2$ amounts to acting on the Fourier sphere with the corresponding homographies. It is then clear that for a generic rotation none of the coefficients will be at infinity, i.e. there will only be one singularity at infinity.

\end{remark}

\section{Diagrams for general connections}

\subsection{Definition of the diagram}

To define the diagram in the the general case where there are several irregular singularities, the idea is to use the Fourier-Laplace transform to reduce to the situation where there is only one singularity at infinity studied in the previous section. The invariance of the diagram under $\SL_2(\C)$ will ensure that this construction is well--defined.

\begin{theorem}
Let $(E,\nabla)$ be an irreducible algebraic connection on a Zariski open subset of $\A^1$, which is not a rank 1 connection with only a singularity at infinity of order less than two. Let $(\bm{\breve\Theta},\bm{\breve\Ccal})$ be its modified formal data. Then for any $A$ in an open dense subset of $\SL_2(\C)$, the formal data $A\cdot (\bm{\breve\Theta},\bm{\breve\Ccal})$ only have support at infinity, and the diagram $\Gamma^\infty(A\cdot (\bm{\breve\Theta},\bm{\breve\Ccal}))$ is independent of $A$. 
\end{theorem}

\begin{proof}
It is easy to find a matrix $A\in \SL_2(\C)$ such that $A\cdot (\bm{\breve\Theta},\bm{\breve\Ccal})$ has support at infinity. For example we may take $A$ of the form $A=F T_\lambda$, with $\lambda\in \C$: it suffices to choose the coefficient $\lambda$ of the twist such that all the coefficients of the terms of slope 2 in the active circles of $T_\lambda \cdot (\bm{\breve\Theta},\bm{\breve\Ccal})$ at infinity are non-zero. Applying then the Fourier transform, this guarantees that the active circles at infinity of $T_\lambda \cdot \bm{\breve\Theta}$ remain at infinity while the active circles at finite distance are sent to infinity. The results of the previous section now imply that $ BA\cdot (\bm{\breve\Theta},\bm{\breve\Ccal})$ has support at infinity  and that $\Gamma^\infty(BA\cdot (\bm{\breve\Theta},\bm{\breve\Ccal}))=\Gamma^\infty(A\cdot (\bm{\breve\Theta},\bm{\breve\Ccal}))$ for $B$ in an open dense subset of $\SL_2(\C)$. 
\end{proof}

\begin{definition}
\label{def: general_diagram_first_definition}
Let $(E,\nabla)$ be an irreducible connection on a Zariski open subset of the affine line, $(\bm\Theta,\bm{\Ccal})$ its formal data, and $(\bm{\breve\Theta},\bm{\breve\Ccal})$ the corresponding modified formal data. We define the diagram associated to $(E,\nabla)$ as 
\[\Gamma(E,\nabla)=\Gamma(\bm\Theta,\bm{\Ccal})=\Gamma(\bm{\breve\Theta},\bm{\breve\Ccal}):=\Gamma^\infty(A\cdot (\bm{\breve\Theta},\bm{\breve\Ccal}))\] for any $A\in \SL_2(\C)$ such that $A\cdot (\bm{\breve\Theta},\bm{\breve\Ccal})$ has support at infinity. 
\end{definition}

\subsection{Explicit formula for the core diagram}

The construction of the diagram given above involves a choice of $\SL_2(\C)$ transformation. The explicit formulas for the number of edges and our understanding of the Legendre
transform allow us to give a direct expression of the core diagram, which we may then take as the definition of the core diagram as in the introduction.

Let $I=\cir{q},I'=\cir{q'}$ be circles at points $a,a'\in\Pbb^1$. Using similar notations as before, we write $\beta:=\ram(q)$, $\beta':=\ram(q')$ and
\[ q=\sum_{i=0}^{k} b_i z_a^{-\alpha_i/\beta},\qquad q'=\sum_{i=0}^{k'} b'_i z_{a'}^{-\alpha'_i/\beta'},
\]
where the coefficients $b_i,b'_i\in\C$ are non-zero, and set $\alpha:=\alpha_0=\Irr(q)$, $\alpha':=\alpha'_0=\Irr(q')$. If $q$ and $q'$ are both at infinity, that is if $a=a'=\infty$, then we know from a previous lemma the number of edges $B_{\cir{q},\cir{q'}}$, which depends only on the degrees of the terms of $q$ and $q'$. Let us denote by $B^{\infty}_{\cir{q},\cir{q'}}$ this formula. 

\begin{theorem}
\label{thm:direct_formula_diagram}
 Let $I=\cir{q}_a$ and $I'=\cir{q'}_{a'}$ be active circles. If $a=a'$, denoting as before by $\alpha_r/\beta$ and $\alpha'_r/\beta'$ the slopes of the different parts of $q$ and $q'$, we assume that $\frac{\alpha_r}{\beta}\geq\frac{\alpha'_r}{\beta'}$. The number of edges $B_{I,I'}$ between the vertices associated to $I$ and $I'$ following:
\begin{enumerate}
\item If $a=a'=\infty$ then $B_{I,I'}=B^{\infty}_{I,I'}$.
\item If $a=\infty$ and $a'\neq \infty$ then $B_{I,I'}=\beta(\alpha'+\beta')$.
\item If $a\neq \infty, a'\neq \infty$ and $a\neq a'$ then $B_{I,I'}=0$.
\item If $a=a'\neq \infty$ then $B_{I,I'}=B^{\infty}_{I,I'}-\alpha\beta'-\alpha'\beta$.
\end{enumerate}
\end{theorem}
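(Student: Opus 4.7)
The plan is to exploit the $SL_2(\C)$-invariance of the core diagram established in Section \ref{section_invariance of the diagram}: one takes a generic symplectic transformation $A \in SL_2(\C)$ that sends both $I$ and $I'$ to circles at infinity, computes $B^\infty_{A \cdot I, A \cdot I'}$ using the explicit formula of Lemma \ref{nb_aretes_cas_general}, and verifies that the result matches the claimed expression in each case. Concretely I would take $A = F T_\lambda$ for generic $\lambda \in \C$: the twist $T_\lambda$ leaves finite-distance circles untouched and tunes the slope-$2$ coefficient of each circle at infinity to be non-zero, so that applying $F$ afterwards keeps every circle at infinity.

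From the Legendre-transform dictionary worked out in the earlier analysis of the action of $SL_2(\C)$, the images of the two circles can be read off explicitly:
\begin{itemize}
\item A circle $\cir{q}$ already at infinity, of ramification $\beta$ and irregularity $\alpha$, is sent to a circle at infinity whose new ramification $\tilde{\beta}$ and irregularity $\tilde{\alpha}$ satisfy $\tilde{\alpha} - \tilde{\beta} = \beta$ (the precise values depend on whether $\slope(q) > 2$ or $\slope(q) \leq 2$, but this identity is uniform).
\item A circle $\cir{q'}_{a'}$ at finite distance $a' \neq \infty$ is sent to the type-$2$ circle $\cir{-a'\xi + \widetilde{q}'}_\infty$ of slope $1$, with $\tilde{\alpha}' = \tilde{\beta}' = \alpha' + \beta'$. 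The degenerate subcase $a' = 0$ is absorbed into the $SL_2(\C)$ action by choosing a slightly more complicated matrix $A$.
\end{itemize}

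With this dictionary, case 1 is tautological, and cases 2 and 3 fall out quickly. In case 2 the transformed circles have distinct slopes ($> 1$ versus $= 1$), hence share no common part, and the simpler form of Lemma \ref{nb_aretes_cas_general} yields
\[
B^\infty_{A \cdot I, A \cdot I'} = \tilde{\beta}' (\tilde{\alpha} - \tilde{\beta}) = (\alpha' + \beta') \beta,
\]
as claimed. In case 3 the two transformed circles both have slope $1$ but distinct leading coefficients $-a$ and $-a'$, so again there is no common part, and the same formula reduces to $(\alpha' + \beta') \cdot 0 = 0$.

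Case 4 is the main obstacle. Here both Fourier images $\tilde{I}$ and $\tilde{I}'$ sit at infinity with ramifications $\alpha + \beta$ and $\alpha' + \beta'$, and share the common leading term $-a\xi$; using Proposition \ref{forme_de_tilde(q)}, the Galois orbits of their subsequent terms pair up one-to-one with those of the original $q$ and $q'$, so that if $q, q'$ share their first $r$ terms then $\tilde{q}, \widetilde{q}'$ share their first $r+1$ terms once the prefix $-a\xi$ is included. Substituting the explicit expressions for the new gcd chain $p'_k = (\tilde{\beta}', \alpha^{\mathrm{new}}{}'_0, \dots, \alpha^{\mathrm{new}}{}'_{k-1})$ and using the elementary identity $(\alpha'+\beta', \alpha'_j) = (\alpha'_j, \beta')$, a direct telescoping of gcd-differences in the general form of Lemma \ref{nb_aretes_cas_general} reduces the difference $B^\infty_{A \cdot I, A \cdot I'} - B^\infty_{I,I'}$ to
\[
\alpha \alpha' - (\alpha + \beta)(\alpha' + \beta') + \beta\beta' = -\alpha\beta' - \alpha'\beta,
\]
which is exactly the claimed correction. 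The parallel computation for the self-loops case $I = I'$ proceeds in the same way, with the $+1$ term in the loop formula of Lemma \ref{nb_aretes_cas_general} cancelling on both sides. Carrying out this bookkeeping carefully, and in particular verifying that the Galois-orbit identifications are correct, is the main technical step I anticipate.
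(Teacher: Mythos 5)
Your proposal is correct and follows essentially the same route as the paper's own proof: a generic twist $T_\lambda$ followed by the Fourier transform, the explicit edge-count formula of Lemma \ref{nb_aretes_cas_general} together with the structure of the Legendre transform from Proposition \ref{forme_de_tilde(q)}, and the same case-by-case verification, including the telescoping of the gcd chain in case 4 (where, as in the paper, the identity to use is $(\alpha'_0,\dots,\alpha'_j,\alpha'+\beta')=(\alpha'_0,\dots,\alpha'_j,\beta')$ for the full chain containing $\alpha'_0=\alpha'$, rather than the termwise version you wrote). The only point the paper spells out separately is the subcase of case 4 where $q$ and $q'$ have no common part, which your uniform treatment with $r=0$ (the images sharing only the prefix $-a\xi$) also covers.
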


\begin{proof} To prove this, we apply a generic $\SL_2(\C)$ twist at infinity acting on exponential factors at infinity as $q\mapsto q+\lambda z^2$, followed by Fourier transform. From our study of the Legendre transform we obtain the form of the images of $q$ and $q'$ under this process, then we apply the formula for the number of edges at infinity. Let us first deal with the cases where $q\neq q'$.
\begin{enumerate}
\item This case is just a consequence of the fact that the twist and the Fourier transform leave $B^\infty_{\cir{q},\cir{q'}}$ invariant.
\item Let $q_1$ and $q'_1$ be the images of $q$, $q'$. If $\frac{\alpha}{\beta}\leq 2$, then $q_1$ has ramification $\beta$ and slope $2=2\beta/\beta$. If $\frac{\alpha}{\beta}>2$, then $q_1$ has ramification $\alpha-\beta$ and slope $\frac{\alpha}{\alpha-\beta}$. On the other hand, $q'_1$ is of the form
\[ q'_1=-a'z+ b'z^{\alpha'/(\alpha'+\beta')}+\dots
\]
for some $b'\in\C$, $b'\neq 0$, so $q'_1$ has ramification $\alpha'+\beta'$ and slope 1 (if $a\neq 0$), or $\frac{\alpha'}{\alpha'+\beta'}$ (if $a=0$). When $\frac{\alpha}{\beta}\leq 2$, it follows from the formula for $B^\infty$ that 
\[ B_{\cir{q},\cir{q'}}=B^{\infty}_{\cir{q_1},\cir{q'_1}}=(\alpha'+\beta')(2\beta-\beta)=\beta(\alpha'+\beta').
\]
When $\frac{\alpha}{\beta}>2$ the same formula gives
\[ B_{\cir{q},\cir{q'}}=B^{\infty}_{\cir{q_1},\cir{q'_1}}=(\alpha'+\beta')(\alpha-(\alpha-\beta))=\beta(\alpha'+\beta').
\]
so both cases yield the expected result.

\item In this case $q_1$ and $q'_1$ are of the form
\[ q_1=-az+ bz^{\alpha/(\alpha+\beta)}+\dots,\qquad q'_1=-a'z+ b'z^{\alpha'/(\alpha'+\beta')}+\dots,
\]
that is $q_1$ and $q'_1$ have respective ramification orders $\alpha+\beta$ and $\alpha'+\beta'$, have both slope 1 and non common part. The formula for $B^\infty$ thus gives
\[ B_{\cir{q},\cir{q'}}=B^{\infty}_{\cir{q_1},\cir{q'_1}}=(\alpha'+\beta')((\alpha+\beta)-(\alpha+\beta))=0.
\]
\item If $a=a'$, $q_1$ and $q'_1$ are of the form
\[ q_1=-az+\tilde{q}, \qquad q'_1=-az+\tilde{q}',
\]
with $\tilde{q}$ having ramification $\alpha+\beta$ and slope $\frac{\alpha}{\alpha+\beta}$, and $\tilde{q}'$ having ramification $\alpha'+\beta'$ and slope $\frac{\alpha'}{\alpha'+\beta'}$. 
Let us first assume that $q$ and $q'$ have a common part. Then so do $q_1$ and $q'_1$. The slopes or their different parts are $\frac{\alpha_r}{\alpha+\beta}\geq \frac{\alpha'_r}{\alpha'+\beta'}$. The form of $q_1$ and $q'_1$ is given by the lemma giving the form of the Legendre transform. The slopes or their different parts are $\frac{\alpha_r}{\alpha+\beta}\geq \frac{\alpha'_r}{\alpha'+\beta'}$ and the formula for $B^\infty_{\cir{q_1},\cir{q'_1}}$ gives
\begin{align*}
B_{\cir{q},\cir{q'}}&=B^{\infty}_{\cir{q_1},\cir{q'_1}}\\
&=((\alpha'+\beta')-(\alpha'_0,\alpha'+\beta'))\alpha_0+\dots +(\alpha'_0,\dots,\alpha'_{r-1},\alpha'+\beta')\alpha_r-(\alpha+\beta)(\alpha'+\beta')\\
&=((\alpha'+\beta')-(\alpha'_0,\beta'))\alpha_0+\dots +(\alpha'_0,\dots,\alpha'_{r-1},\beta')\alpha_r-(\alpha\alpha'+\beta\beta'+\alpha'\beta+\alpha\beta')\\
&=B^\infty_{\cir{q},\cir{q'}}-\alpha\beta'-\alpha'\beta\\
&=B^\infty_{\cir{q},\cir{q'}}-2\alpha\beta',
\end{align*}
where in the last step we have used that $\frac{\alpha}{\beta}=\frac{\alpha'}{\beta'}$ since $q$ and $q'$ have a common part.
Now assume that $q$ and $q'$ have no common part. Then the slope of $\tilde{q}$ and $\tilde{q}'$ are respectively $\frac{\alpha}{\alpha+\beta}\geq \frac{\alpha'}{\alpha'+\beta'}$ (the order is preserved by the Legendre transform), so from the formula for $B^\infty_{\cir{\tilde{q}},\cir{\tilde{q}'}}$ we have
\[ B_{\cir{q},\cir{q'}}=-\beta(\alpha'+\beta').
\]
On the other hand, we have $B^\infty_{\cir{q},\cir{q'}}=\beta'(\alpha-\beta)$, so we again have $B^{\infty}_{\cir{q},\cir{q'}}=B^\infty_{\cir{q},\cir{q'}}-\alpha\beta'-\alpha'\beta$ as expected.
\end{enumerate}
To complete the proof, it remains to deal with the case of loops. We check that one has $B_{\cir{q},\cir{q}}=B^\infty_{\cir{q},\cir{q}}-2\alpha\beta$ when $a\neq \infty$, so the formula for $B_{\cir{q},\cir{q'}}$ remains valid when $q=q'$.
\end{proof}

Now, let us consider an algebraic connection $(E,\nabla)$ on a Zariski open subset of $\Sigma$. If we choose an isomorphism $\phi$ between $\Sigma$ and $\mathbb P^1=\mathbb C\cup \{\infty\}$ such that $\phi(\infty)=\infty$, this allows us to view $(E,\nabla)$ as an algebraic connection on a Zariski open subset of the affine line. In turn if $(E,\nabla)$ is irreducible (and not a rank one connection with only a singularity at infinity of order less than 2), definition \ref{def: general_diagram_first_definition} yields a diagram $\Gamma(E,\nabla)$. 
Notice that theorem \ref{thm:direct_formula_diagram} implies the diagram we obtain doesn't depend on the choice of isomorphism $\phi$. Moreover, the formula for the number of edges of the core diagram still makes sense even if $(E,\nabla)$ is not irreducible. We can thus make a slightly more general definition.

\begin{definition}

Let $(E,\nabla)$ be an algebraic connection on a Zariski open subset of $\Sigma$, $(\bm\Theta,\bm{\Ccal})$ its formal data, and $(\bm{\breve\Theta},\bm{\breve\Ccal})$ the corresponding modified formal data. Let $N_c$ be the set of active circles of $\bm{\breve\Ccal}$. We define a diagram
$\Gamma(E,\nabla)=\Gamma(\bm\Theta,\bm{\Ccal})=\Gamma(\bm{\breve\Theta},\bm{\breve\Ccal})$ as follows: it consists of a core diagram $\Gamma_c(E,\nabla)=\Gamma_c(\bm{\breve\Theta})$ with set of nodes $N_c$ and adjacency matrix given by the formulas of theorem \ref{thm:direct_formula_diagram}, to which are glued legs defined by any choice of minimal marking of $\bm{\breve\Ccal}$. Furthermore, any choice of such a marking defines a dimension vector $\mathbf d$ for $\Gamma(E,\nabla)$.
\end{definition} 

\begin{remark}
In \cite{boalch2020diagrams}, the authors also define a diagram $\Gamma(E,\nabla)$ for a connection with one irregular singularity at infinity together with regular singularities at finite distance. Our definition of the diagram coincides with the one of \cite{boalch2020diagrams} in this case as well, provided that in the latter framework one chooses minimal \textit{special} markings for the conjugacy classes of the monodromies at the regular singularities, where a special marking means a marking $\xi=(\xi_1,\dots, \xi_k)$ such that $\xi_1=1$. This comes from the fact that to pass from the formal data to the modified formal data, we kill the eigenspace for the eigenvalue 1 of the monodromy of the regular part.
\end{remark}

\subsection{Fundamental representations of the diagrams}
\label{section_fundamental_representations}

In this section, we discuss how a diagram can be ``read'' in different ways corresponding to connections on bundles of different ranks with different formal data, which we call the fundamental representations of the diagram. This generalizes the $k$-partite cases considered in \cite{boalch2012simply}.

The idea is the following. We have seen that the diagram associated to a meromorphic connection is invariant under the action of $\SL_2(\C)$. Each orbit under this action contains different formal data, with in general different numbers of singularities.  

Let $\cir{q}_\infty$ be the circle at infinity associated with an exponential factor $q$. We have seen that $\cir{q}$ can be sent to finite distance by a symplectic transformation if and only if it is of the form 
\begin{equation}
q=\lambda z^2+ q',
\label{circle_goes_at_finite_distance}
\end{equation}
where $q'$ is of slope $\leq 1$. Furthermore, when this is the case, the position of the singularity at finite distance that we obtain is determined by the coefficient of $z$ in $q$, i.e. the number $\mu\in \C$ such that 
\begin{equation}
q=\lambda z^2 +\mu z+q'',
\label{circle_finite_distance_bis}
\end{equation}
where $q''$ has slope $<1$. 

Let us consider formal data at infinity $(\Theta_\infty,\bm{\Ccal}_\infty)$ at infinity. Let $N\subset \pi_0(\Ical_\infty)$ be the subset of active exponents, i.e. the support of $\Theta_\infty$. $N$ is the set of vertices of the core diagram associated to $\Theta_\infty$. We can partition $N$ according to the coefficients of $z^2$ and $z$ in the exponential factors. Let $N_\infty\subset N$ be the set of exponential factors in $N$ which are not of the form \eqref{circle_goes_at_finite_distance} for any $\lambda\in \C$. For $\lambda\in \C$, let $N_\lambda\subset N$ be the set of exponential factors of the form \eqref{circle_goes_at_finite_distance}. For $\lambda,\mu\in\C$, let $N_{\lambda,\mu}\subset N_\lambda$ be the set of exponential factors of the form \eqref{circle_finite_distance_bis}.

There is a finite number of coefficients $\lambda\in\C$ such that $N_\lambda$, is non empty, which we denote $\lambda_1,\dots,\lambda_k$. We set $N_i:=N_{\lambda_i}$ for $i=1,\dots,r$. Then, for each $i$, there is again a finite number of coefficients $\mu\in\C$ such that $N_{\lambda_i,\mu}$ is non-empty, which we denote $\mu_{i,1},\dots,\mu_{k,s_i}$, and we set $N_{i,j}:=N_{\lambda_i,\mu_{i,j}}$. For each pair $i,j$, let $t_{i,j}:=\Card N_{i,j}$, and $q_{i,j,k}$, $k=1,\dots,t_{i,j}$ be the exponential factors of slope $<1$ such that the elements of $N_{i,j}$ are 
\begin{equation}
\lambda_i z^2 + \mu_{i,j} z+ q_{i,j,k}. 
\end{equation}
Let $\beta_{i,j,k}=\ram q_{i,j,k}$ be the ramification order of $q_{i,j,k}$, and $\alpha_{i,j,k}/\beta_{i,j,k}$ its slope. We thus have the following partition of the set of active circles:
\[
N=N_\infty \cup N_1 \cup \dots N_r.
\]

There are $r+1$ fundamental representations of diagram: the generic representation, corresponding to $\Theta$, and $r$ other representations, depending on which of the sets $N_1,\dots, N_r$ we choose to send to finite distance by a symplectic transformation. The generic representation corresponds to symplectic transformations $A\in \SL_2(\C)$ such that all circles remain at infinity under $A$. From the previous discussion of the action of $\SL_2(\C)$ on the coefficients of $z^2$ in the exponential factor, if
\[ A=\begin{pmatrix}
a&b\\
c&d
\end{pmatrix},
\]
the condition for this is $2c\lambda-d\neq 0$. 

For each $i=1,\dots,r$ the $i$-th representation of the diagram corresponds to acting on $\Theta_\infty$ with $A\in \SL_2(\C)$ such that $2c\lambda_i-d=0$. Such a symplectic transformation sends all circles in $N_i$ to finite distance. The formal data that we obtain have $s_i=\text{Card}(N_i)$ poles at finite distance corresponding to the coefficients $\mu_{i,1},\cdots,\mu_{i,s_i}$ (and whose position depends on $A$). For each $j=1,\dots,s_i$, the vertices in $N_{i,j}$ then correspond to the active circles of $A\cdot \Theta_\infty$ at the pole associated to $\mu_{i,j}$. It follows from the Legendre transformation that the image  $A\cdot\cir{q_{i,j,k}}$ of the circle $\cir{q_{i,j,k}}$ has ramification order $\beta_{i,j,k}-\alpha_{i,j,k}$ and slope $\alpha_{i,j,k}/(\beta_{i,j,k}-\alpha_{i,j,k})$. 
This generalizes the different readings of the diagram considered in \cite{boalch2008irregular,boalch2012simply,boalch2016global}. The situation considered in those works is the simply laced case where all circles at infinity are of the form 
\[ q=\lambda z^2 +\mu z,
\]
with $\lambda,\mu\in\C$. In particular $N_\infty$ is empty. In this situation, each circle can be sent by a symplectic transformation to a tame circle at finite distance.

\section{Dimension of the wild character variety}

\subsection{The wild character variety}

Let $(E,\nabla)$ be an irreducible algebraic connection on a Zariski open subset of $\Sigma$, and let $(\bm\Theta, \bm\Ccal)$ be its formal data. The connection defines a point in the wild character variety $\MB(E,\nabla)=\MB(\bm\Theta, \bm\Ccal)$. Let us quickly review the quasi-Hamiltonian description of $\MB(E,\nabla)$ in the general case with several singularities.

Let us set $G:=\GL_n(\C)$ where $n$ is the rank of $(E,\nabla)$, and keeping the previous notations let $a_1,\dots, a_m\in \Sigma$ be its singular points. Let $V^0\to \Ical$ be the (non-modified) local system on $\Ical$ giving rise to $V$. For $k=1,\dots,m$, we consider the restriction $V^0_{a_k}\to \partial_{a_k}$ of the local system system $V^0$. Let us choose a direction $d\in\partial_{a_k}$, and denote by $\rho_k\in \GL(V^0_d)$ the monodromy of $V^0_{a_k}$. As before, we denote by $\cir{q^{(k)}_1},\dots \cir{q^{(k)}_{r_k}}\in \pi_0(\Ical_{a_k})$ the active circles at $a_k$, $n^{(k)}_i\in \N$ their respective multiplicities, and $\beta^{(k)}_j$ their respective ramification orders. Let $H_k:=\GrAut(V^0_{a_k})\simeq\prod_{i=1}^{r_k} \GL_{n^{(k)}_i}(\C)^{b^{(k)}_i}$. The monodromy $\rho_k$ is an element of the $H_k$-torsor $H_k(\partial_{a_k})$ (see \cite{boalch2015twisted}). Explicitly, an element of $H_k(\partial_{a_k})$ is of the form
\[ h^{(k)}=\diag(h^{(k)}_1,\dots,h^{(k)}_{r_k}),
\]
where $h^{(k)}_i$ is a square diagonal block matrix of size $\beta^{(k)}_i$ of the form
\[ h^{(k)}_i=
\begin{pmatrix}
0 & \hdots & 0 &  * \\
 *& &  & 0\\
 & \ddots & & \vdots\\
 & & * & 0
\end{pmatrix},
 \]
with each block in $\GL_{n^{(k)}_i}(\C)$. 

The wild character variety $\MB(E,\nabla)=\mathcal{M}_B(\bm\Theta,\bm{\Ccal})$ is obtained as a symplectic reduction of the twisted quasi-Hamiltonian manifold 
\[  \Hom_{\mathbb{S}}(V)\simeq \Acal(V^0_{a_1}) \circledast \dots \circledast \Acal(V^0_{a_m})\sslash G, 
\]
where $\circledast$ is the quasi-Hamiltonian fusion operation. Here, each piece $\Acal(V^0_{a_k})$ is a twisted quasi-Hamiltonian $H_k\times G$-space: 
\[ \Acal(V^0_{a_k})=H(\partial_{a_k})\times G \times \prod_{d\in \A_{a_k}}\Sto_d,
\]
where $\A_{a_k}$ is the set of singular directions of the connection at $a_k$, and $\Sto_d$ is the Stokes group associated to the singular direction $d\in \A_{a_k}$. Hence,
$\Hom_{\mathbb{S}}(E,\nabla)$ is a twisted quasi-Hamiltonian $\mathbf{H}\times G$-space, where 
$\mathbf{H}=H_1\times \dots\times H_m$. 

The formal monodromies $\rho(a_k)\in H(\partial_{a_k})$ determine twisted conjugacy classes $\mathcal{C}(\partial_{a_k})\subset H(\partial_{a_k})$ which do not depend on the choice of direction $d\in \partial_{a_k}$ and the choice of isomorphism $V^0_{d}\simeq \C^{\sum_j n^{(k)}_i \beta^{(k)}_i}$. Finally, the wild character variety is the twisted quasi-Hamiltonian reduction of $\Hom_{\mathbb{S}}(E,\nabla)$ at the twisted conjugacy class $\bm{\mathcal{C}}:=\prod_{k=1}^m \mathcal{C}(\partial_{a_k}) $, i.e. 
\begin{equation}
\mathcal{M}_B(\bm\Theta,\bm{\Ccal})=\Hom_{\mathbb{S}}(E,\nabla)\sslash_{\bm{\mathcal{C}}}  \mathbf{H}.
\end{equation}

\subsection{Formula for the dimension}

If $\Gamma$ is a diagram, possibly with loops, negative edges or negative loops, $N$ its set of vertices, and $B\in M_{N\times N}(\mathbb{Z})$ its adjacency matrix, the Cartan matrix of$\Gamma$ is defined by
\[ C=2 \Id-B.
\]

We have the following result: 

\begin{theorem}
\label{theorem_dimension}
Let $(E,\nabla)$ be an irreducible algebraic connection on a Zariski open subset of $\Sigma$. Let $N$ be the set of vertices of the diagram $\Gamma(E,\nabla)$, and $(\cdot,\cdot)$ the bilinear form on $\mathbb R^N$ defined by the Cartan matrix of $\Gamma(E,\nabla)$. The dimension of the wild character variety $\mathcal{M}_B(E,\nabla)$ is given by
\begin{equation}
\dim \mathcal{M}_B(E,\nabla)=2-(\mathbf{d},\mathbf{d}),
\end{equation}
where $\mathbf{d}\in \Z_{\geq 0}^N$ is the dimension vector of $\Gamma(E,\nabla)$ defined by any choice of marking of $(E,\nabla)$. 
\end{theorem}

The proof is given in appendix C.

\begin{corollary}
If $(\bm\Theta,\bm{\Ccal})$ are the formal data of an irreducible connection $(E,\nabla)$, and $A\in \SL_2(\C)$, then the character varieties $\MB(\bm\Theta,\bm{\Ccal})$ and $\MB(A\cdot (\bm\Theta,\bm{\Ccal}))$ have the same dimension. 
\end{corollary}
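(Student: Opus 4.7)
The plan is to combine Theorem \ref{theorem_dimension}, which expresses $\dim\Mcal_B(\Theta,\bm{\Ccal})$ purely in terms of the diagram $\Gamma(\Theta,\bm{\Ccal})$ and its dimension vector $\mathbf{d}$, with the $SL_2(\C)$-invariance of the diagram already established as the main theorem of the paper. Since everything the dimension formula sees is built from $\Gamma$ and $\mathbf{d}$, the corollary will reduce to checking that both of these objects are preserved when we replace $(\Theta,\bm{\Ccal})$ by $A\cdot(\Theta,\bm{\Ccal})$.

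First I would apply Theorem \ref{theorem_dimension} to both sides, writing
\[
\dim\Mcal_B(\Theta,\bm{\Ccal})=2-(\mathbf{d},\mathbf{d}),\qquad \dim\Mcal_B(A\cdot(\Theta,\bm{\Ccal}))=2-(\mathbf{d}',\mathbf{d}'),
\]
where $(\cdot,\cdot)$ and $(\cdot,\cdot)'$ are the symmetric forms attached to the Cartan matrices of $\Gamma(\Theta,\bm{\Ccal})$ and $\Gamma(A\cdot(\Theta,\bm{\Ccal}))$ respectively, and $\mathbf{d},\mathbf{d}'$ are the corresponding dimension vectors. The invariance theorem gives $\Gamma(A\cdot(\breve\Theta,\bm{\breve\Ccal}))=\Gamma(\breve\Theta,\bm{\breve\Ccal})$, and since the diagram attached to $(\Theta,\bm{\Ccal})$ is by definition that of $(\breve\Theta,\bm{\breve\Ccal})$, we obtain an identification of the two Cartan matrices, hence of the two bilinear forms.

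Next I would verify that the dimension vectors agree under this identification. For the core nodes this is immediate from the discussion of the action of $SL_2(\C)$ on modified formal data: the symplectic transformation $A$ induces a bijection on active circles $\cir{q}\mapsto A\cdot\cir{q}$ and the multiplicities $m_i$ are preserved, as recorded in the formula
\[
\breve\Theta'=A\cdot\breve\Theta=n_1\, A\cdot\cir{q_1}+\dots+n_r\, A\cdot\cir{q_r}.
\]
For the leg contributions one uses that $A$ acts on $\bm{\breve\Ccal}$ by pushforward along the Legendre homeomorphism tensored with the rank one local systems $W_{\cir{\widetilde q}}$; this operation preserves the isomorphism class of each local leg datum, hence the labels on the legs glued to the core diagram are unchanged. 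Thus $\mathbf{d}=\mathbf{d}'$ under the identification of vertex sets.

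Putting these together yields $(\mathbf{d},\mathbf{d})=(\mathbf{d}',\mathbf{d}')'$, and the dimension formula gives equality of dimensions. The only step that is not completely formal is the matching of leg data, but this reduces to the fact that the formal Fourier transform, and hence any element of $SL_2(\C)$, acts on modified formal data by an isomorphism of the category of compatible local systems on $\Ical$, so the legs attached to corresponding core nodes are isomorphic as decorated diagrams. I expect no other obstacle: the corollary is essentially a formal consequence of Theorem \ref{theorem_dimension} together with the invariance of the full (decorated) diagram.
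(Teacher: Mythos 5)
Your proposal is correct and follows essentially the same route as the paper: the paper's proof simply observes that, by construction, $(\Theta,\bm{\Ccal})$ and $A\cdot(\Theta,\bm{\Ccal})$ have the same diagram with the same dimension vector, and then applies Theorem \ref{theorem_dimension} to both sides. Your additional verification that the multiplicities and leg data are preserved under the $SL_2(\C)$ action is a worthwhile elaboration of what the paper compresses into the phrase ``by construction of the diagram,'' but it is not a different argument.
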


\begin{proof}
By construction of the diagram $V$ and $A\cdot (\bm\Theta,\bm{\Ccal})$ have the same diagram $\Gamma$. Therefore we have $\dim \MB(\bm\Theta,\bm{\Ccal})=2-(\mathbf{d},\mathbf{d})=\dim \MB(A\cdot (\bm\Theta,\bm{\Ccal}))$. 
\end{proof}

\section{Examples of diagrams}

Let us now give a few examples of diagrams. We first review a few known cases to see how we recover the diagrams from our more general approach, then discuss some new cases, in particular cases arising from Painlevé-type equations.

\subsection*{Complete bipartite case} Let us consider the case of a connection with  a second order poles at infinity, together with simple poles $a_1,\dots, a_m$ at finite distance, with a global modified irregular class of the form
\[
\bm{\breve\Theta}=k_1 \cir{\alpha_1 z}_\infty+  \dots + k_n\cir{\alpha_n z}_\infty + l_1\cir{0}_{a_1}+\dots +l_m\cir{0}_{a_m},
\]
with $k_i\in \N$, $l_j\in \N$, and $\alpha_i\in \C$ pairwise distinct, $a_i\in \C$ pairwise distinct. There are $n$ active circles of slope 1 at infinity. The diagram is the following: 

\begin{center}
\begin{tikzpicture}
\tikzstyle{vertex}=[circle,fill=black,minimum size=5pt,inner sep=0pt]
\foreach \i in {0,...,3}
\foreach \j in {0,...,2}
{\node[vertex] (A\i) at (0,-1.5+\i) {};
\node[vertex] (B\j) at (3,-1+\j) {};
\draw (A\i)--(B\j);}
\draw (0,1.5) node[left] {$\cir{0}_{a_1}$};
\draw (0,-1.5) node[left] {$\cir{0}_{a_m}$};
\draw (3,1) node[right] {$\cir{\alpha_1 x}_\infty$};
\draw (3,-1) node[right] {$\cir{\alpha_n x}_\infty$};
\end{tikzpicture}
\end{center}

It is a complete bipartite graph, one part having $n$ vertices and one part having $m$ vertices. If $n=1$ or $m=1$ this is the star-shaped case.

\subsection*{Simply laced case} 
An important case is the one where there is only one unramified irregular pole at infinity of order 3, together with simple poles at finite distance. This is the case extensively studied in \cite{boalch2008irregular,boalch2012simply,boalch2016global}, giving rise in those works to simply-laced supernova graphs, with core any complete multipartite graph. We can check that our definition gives the same diagrams.

In this case, the circles at infinity are unramified and have slope $\leq 2$, i.e. they are of the form $\cir{q_{i,j}}_\infty$ with $q_{i,j}=\lambda_i z^2+ \mu_{i,j} z$, for $i=1,\dots k$, $j=1,\dots s_i$, where the coefficients $\lambda_1,\dots,\lambda_k$ are all different, as well as $\mu_{i,1},\dots,\mu_{i,s_i}$ for all $i$. The simple poles correspond to tame circles $\cir{0}_{a_1},\dots, \cir{0}_{a_t}$, where $a_1,\dots a_t\in \C$. Let $I_0:=\{ \cir{0}_{a_1},\dots, \cir{0}_{a_t}\}$ and for $i=1,\dots k$,  $I_i:=\{\cir{\lambda_i z^2 +\mu_{i,1} z},\dots,\cir{\lambda_i z^2 +\mu_{i,s_i} z}\}$. The sets $I_0,\dots, I_k$ constitute a partition of the set of active circles. The core diagram has the following structure two active circles are either linked by no edge in the diagram when they belong to the same set $I_k$, otherwise they are linked by exactly one edge. The core diagram that we obtain is thus a $k+1$-partite graph. The diagram coincides with the one considered in \cite{boalch2012simply,boalch2016global}. 

The standard rank two Lax representations of the Painlevé V and Painlevé IV equation fit into this setting. The representation for Painlevé V has one irregular singularity at infinity of order 2, together with two simple poles at finite distance. The modified irregular class is of the form
\[
\bm{\breve\Theta}=\cir{\alpha z}_\infty + \cir{\beta z }_\infty + \cir{0}_a +\cir{0}_b,
\]
with $\alpha\neq \beta\in \C$, $a\neq b\in \C$. This gives the following diagram:

\begin{center}
\begin{tikzpicture}[scale=0.6]
\tikzstyle{vertex}=[circle,fill=black,minimum size=8pt,inner sep=0pt]
\node[vertex] (A) at (1,1){} ;
\node[vertex] (B) at (-1,1){} ; 
\node[vertex] (C) at (-1,-1){};
\node[vertex] (D) at (1,-1){};
\draw (A)--(B)--(C)--(D)--(A);
\end{tikzpicture}
\end{center}

The standard representation for Painlevé IV has one irregular singularity at infinity of order 3, together with one simple pole at finite distance. 
The modified irregular class is of the form
\[
\bm{\breve\Theta}=\cir{\alpha z^2}_\infty + \cir{\beta z^2}_\infty + \cir{0}_a ,
\]
with $\alpha\neq \beta\in \C$, $a \in \C$. This gives the following diagram:

\begin{center}
 \begin{tikzpicture}[scale=0.6]
\tikzstyle{vertex}=[circle,fill=black,minimum size=8pt,inner sep=0pt]
\node[vertex] (A) at (-30:1.3){} ;
\node[vertex] (B) at (90:1.3){} ; 
\node[vertex] (C) at (210:1.3){};
\draw (A)--(B);
\draw (A)--(C);
\draw (B)--(C);
\end{tikzpicture}
\end{center}

\subsection*{Other supernova graphs} More general supernova graphs correspond to the case where there is one irregular singularity at infinity with all active circles being unramified, together with simple poles at infinity. This is the case considered in \cite[appendix C]{boalch2008irregular}  and \cite{hiroe2014moduli}. If the active circles at infinity are $\cir{q_1},\dots,\cir{q_r}$ we have $B_{ij}=\deg(q_i-q_j)-1$. 

The standard Painlevé II Lax pair fits into this framework. The (modified or not) irregular class is of the form $\bm\Theta=\Theta_\infty=\cir{\alpha z^3}_\infty+\cir{\beta z^3}_{\infty}$ with $\alpha\neq \beta$. The diagram is

\begin{center}
 \begin{tikzpicture}[scale=0.6]
\tikzstyle{vertex}=[circle,fill=black,minimum size=8pt,inner sep=0pt]
\node[vertex] (A) at (0,0){} ;
\node[vertex] (B) at (2,0){} ; 
\draw[double distance=2 pt] (A)--(B);
\end{tikzpicture}
\end{center}

\subsection*{Painlevé III} The standard Painlevé III Lax pair corresponds to a rank 2 connection with 2 irregular singularities. The active circles are two irregular circles of slope 1 at infinity and 2 irregular circles of slope 1 at $z=0$, say $\cir{\lambda_1 z}_\infty$, $\cir{\lambda_2 z}_\infty$ $\cir{\mu_1 z^{-1}}_0$, $\cir{\mu_2 z^{-1}}_0$, with $\lambda_1\neq \lambda_2$, $\mu_1\neq \mu_2$. With no loss of generality, up to applying a twist by a rank one connection on the trivial bundle on $\Pbb^1$, we may assume that $\mu_2=0$, so that we have the active circle $\cir{0}_0$, and that the tame circle has trivial formal monodromy in the non-reduced formal local system, so that it has multiplicity zero in the associated modified formal local system. The diagram associated to the connection is given by 
\begin{center}
\begin{tikzpicture}[scale=0.6]
\tikzstyle{vertex}=[circle,fill=black,minimum size=8pt,inner sep=0pt]
\tikzstyle{vertex_nofill}=[draw,circle,minimum size=8pt,inner sep=0pt]
\node[vertex] (A) at (0,0){} ;
\node[vertex] (B) at (2,0){} ; 
\node[vertex] (C) at (4,0){};
\draw[double distance=2 pt] (A)--(B)--(C);
\draw[dashed] (B) to[out=45, in=0] (2,1.3) to[out=180,in=135] (B);
\draw (-0.3,0) node[left]{$\cir{\lambda_1 z}_\infty$};
\draw (2,0) node[below]{$\cir{\mu_1 z^{-1}}_0$};
\draw (4.3,0) node[right]{$\cir{\lambda_2 z}_\infty$};
\end{tikzpicture}
\end{center}
with dimension vector $\mathbf{d}=(1,1,1)$. This is exactly the same diagram as the one in \cite{boalch2020diagrams}.

\subsection*{Degenerate Painlevé V}

The representation of Painlevé III known as degenerate Painlevé V corresponds as for Painlevé V to a rank 2 connection with one order two pole at infinity and two simple poles at finite distance \cite[p. 34]{joshi2007linearization}. The difference is that the leading term of the connection is nilpotent (this is the meaning of the word degenerate in this context) which implies there is just one ramified active circle at infinity of slope $1/2$. The modified irregular class is of the form:

\[
\bm{\breve\Theta}=\cir{\alpha z^{1/2}}_\infty +\cir{0}_a+ \cir{0}_b,
\]
and this gives the diagram

\begin{center}
\begin{tikzpicture}[scale=0.6]
\tikzstyle{vertex}=[circle,fill=black,minimum size=8pt,inner sep=0pt]
\tikzstyle{vertex_blue}=[circle,fill=blue,minimum size=8pt,inner sep=0pt]
\node[vertex] (A) at (0,0){} ;
\node[vertex] (B) at (2,0){} ; 
\node[vertex] (C) at (4,0){};
\draw[double distance=2 pt] (A)--(B)--(C);
\draw[dashed] (B) to[out=45, in=0] (2,1.3) to[out=180,in=135] (B);
\draw (-0.3,0) node[left]{$\cir{0}_{a}$};
\draw (2,0) node[below]{$\cir{\alpha z^{1/2}}_\infty$};
\draw (4.3,0) node[right]{$\cir{0}_{a}$};
\end{tikzpicture}
\end{center}
computed in \cite{boalch2020diagrams}. This is the same diagram as for  the standard Painlevé III Lax pair. The two irregular classes indeed correspond to different representations of the diagram.

\subsection*{Degenerate Painlevé III}

The degenerate Painlevé III system admits, as for the standard Painlevé III representation, a representation corresponding to a rank 2 connection with two order two poles \cite{ohyama2006studiesV}. The difference is that the leading term of the connection matrix at one of the poles is nilpotent. Again, this corresponds to having one active circle with slope $1/2$. The modified irregular class has the following form:
\[
\bm{\breve\Theta}=\cir{z^{1/2}}_\infty+\cir{\alpha z^{-1}}_0+\cir{\beta z^{-1}}_0
\]
with $\alpha,\beta\in \C$, $\alpha\neq \beta$.

Up to performing a twist at zero, we may assume that $\beta=0$, so that we have the tame circle at 0, and that this circle has trivial formal monodromy, so that the corresponding piece of the modified formal local system has multiplicity 0. This leads to the following diagram, with one negative loop at each vertex, and 4 edges between the two vertices. 

\begin{center}
\begin{tikzpicture}[scale=0.8]
\tikzstyle{vertex}=[circle,fill=black,minimum size=8pt,inner sep=0pt]
\node[vertex] (A) at (-2,0){} ; 
\node[vertex] (C) at (2,0){};
\draw (C)-- node[midway,above]{$4$} (A);
\draw[dashed] (A) to[out=-135, in=-90] (-3.4,0) to[out=90,in=135] (A);
\draw[dashed] (C) to[out=-45, in=-90] (3.4,0) to[out=90,in=45] (C);
\draw (-2.3,-0.5) node[below] {$\cir{\alpha z^{-1}}_0$};
\draw (2.3,-0.5) node[below] {$\cir{z^{1/2}}_\infty$};
\draw (4,0) node {$-1$};
\draw (-4,0) node {$-1$};
\end{tikzpicture}
\end{center}

The Cartan matrix is 
\[
C=\begin{pmatrix}
4 & -4\\
-4 & 4
\end{pmatrix}.
\]

\subsection*{Doubly degenerate Painlevé III}

The doubly degenerate Painlevé III system admits a Lax representation corresponding to two irregular singularities of order two, each with nilpotent leading term leading to an active circle of slope $1/2$ \cite{ohyama2006studiesV}. The modified irregular class is of the form

\[
\bm{\breve\Theta}=\cir{\alpha z^{1/2}}_\infty +\cir{\beta z^{-1/2}}_0
\]
with $\alpha,\beta$ nonzero complex numbers. This gives the following diagram, where the number on the edges correspond to their multiplicities.

\begin{center}
\begin{tikzpicture}[scale=0.8]
\tikzstyle{vertex}=[circle,fill=black,minimum size=8pt,inner sep=0pt]
\node[vertex] (A) at (-2,0){} ; 
\node[vertex] (C) at (2,0){};
\draw (C)-- node[midway,above]{$6$} (A);
\draw[dashed] (A) to[out=-135, in=-90] (-3.4,0) to[out=90,in=135] (A);
\draw[dashed] (C) to[out=-45, in=-90] (3.4,0) to[out=90,in=45] (C);
\draw (-2.3,-0.5) node[below] {$\cir{z^{-1/2}}_0$};
\draw (2.3,-0.5) node[below] {$\cir{z^{1/2}}_\infty$};
\draw (4,0) node {$-1$};
\draw (-4,0) node {$-3$};
\end{tikzpicture}
\end{center}

The Cartan matrix is 
\[
C=\begin{pmatrix}
 8 & -6\\
 -6& 4
\end{pmatrix},
\]

\subsection*{Flaschka-Newell Lax pair for Painlevé II} The Flaschka-Newell Lax pair for Painlevé II \cite{flaschla1980monodromy}, also known as degenerate Painlevé IV, corresponds to a rank 2 connection with one irregular singularity at infinity with one active circle of slope $3/2$, together with one simple pole at finite distance. The modified irregular class is of the form
\[
\bm{\breve\Theta}=\cir{\alpha z^{3/2}}_\infty + \cir{0}_0.
\]
with $\alpha\neq 0$. This gives the diagram \begin{center}
 \begin{tikzpicture}[scale=0.6]
\tikzstyle{vertex}=[circle,fill=black,minimum size=8pt,inner sep=0pt]
\node[vertex] (A) at (0,0){} ;
\node[vertex] (B) at (2,0){} ; 
\draw[double distance=2 pt] (A)--(B);
\end{tikzpicture}
\end{center}
that is the same diagram as for Painlevé II, as found in \cite{boalch2020diagrams}. Again, the two irregular classes correspond to different fundamental representations of the diagram.

\subsection*{Painlevé I} The standard Lax pair for the Painlevé I equation corresponds to a rank 2 connection with just one irregular singularity at infinity, with one active circle of slope $5/2$. The modified irregular class is of the form
\[
\bm{\breve\Theta}=\cir{\alpha z^{5/2}}_\infty,
\]
with $\alpha\neq 0$. The corresponding diagram has one vertex and one loop. 

\begin{center}
\begin{tikzpicture}[scale=0.6]
\tikzstyle{vertex}=[circle,fill=black,minimum size=8pt,inner sep=0pt]
\node[vertex] (A) at (0,0){} ;
\draw (A) to[out=45, in=0] (0,1.3) to[out=180,in=135] (A);
\end{tikzpicture}
\end{center}

\subsection*{H3 surfaces}

The  Painlevé equations correspond to the simplest examples of non-trivial wild character varieties, since their moduli spaces are 2-dimensional. The corresponding nonabelian Hodge spaces $\mathfrak{M}$
are thus complete hyperkähler manifolds of real dimension four, so are examples of H3 surfaces in the terminology of \cite{boalch2018wild} to designate the two-dimensional wild character varieties. Apart from the cases corresponding to Painlevé equations, there are 3 other known H3 surfaces whose standard representation correspond to a fuchsian connection with 3 regular singularities. Since they have no deformation parameters, they do not give rise to an isomonodromy system.
Thanks to our more general theory, we are now able to associate a diagram to all H3 surfaces. They are represented in fig. \ref{table_H3}. The reader may check that in each case we have $2-(\mathbf{d},\mathbf{d})=2$ as expected. Notice that for Painlevé II, IV, V, VI, the diagram is exactly the affine Dynkin diagram corresponding to its Okamoto symmetries \cite{okamoto1986studies,okamoto1987studiesII,okamoto1986studiesIII,okamoto1987studiesIV,ohyama2006studiesV},

\begin{figure}[h]
\begin{center}
\begin{tabular}{|c|c|}
\hline
Space & Diagram \\
\hline
$\widehat{E}_8$ & \begin{tikzpicture}[scale=0.5]
\tikzstyle{vertex}=[circle,fill=black,minimum size=8pt,inner sep=0pt]
\node[vertex] (A) at (-2,0){};
\node[vertex] (B) at (0,0){};
\node[vertex] (C) at (2,0){};
\node[vertex] (D) at (4,0){};
\node[vertex] (X) at (2,2){}; 
\node[vertex] (E) at (6,0){};
\node[vertex] (F) at (8,0){};
\node[vertex] (G) at (10,0){};
\node[vertex] (H) at (12,0){};
\draw (A)--(B)--(C)--(D)--(E)--(F)--(G)--(H);
\draw (C)--(X);
\draw (A)++(0,-0.3) node[below left]{$2$};
\draw (B)++(0,-0.3) node[below]{$4$};
\draw (C)++(0,-0.3) node[below]{$6$};
\draw (X)++ (-0.3,0) node[left]{$3$};
\draw (D)++(0,-0.3) node[below]{$5$};
\draw (E)++(0,-0.3) node[below]{$4$};
\draw (F)++(0,-0.3) node[below]{$3$};
\draw (G)++(0,-0.3) node[below]{$2$};
\draw (H)++(0,-0.3) node[below]{$1$};
\end{tikzpicture} \\
$\widehat{E}_7$ & \begin{tikzpicture}[scale=0.5]
\tikzstyle{vertex}=[circle,fill=black,minimum size=8pt,inner sep=0pt]
\node[vertex] (A) at (-2,0){};
\node[vertex] (B) at (0,0){};
\node[vertex] (C) at (2,0){};
\node[vertex] (D) at (4,0){};
\node[vertex] (X) at (4,2){}; 
\node[vertex] (E) at (6,0){};
\node[vertex] (F) at (8,0){};
\node[vertex] (G) at (10,0){};
\draw (A)--(B)--(C)--(D)--(E)--(F)--(G);
\draw (D)--(X);
\draw (A)++(0,-0.3) node[below left]{$1$};
\draw (B)++(0,-0.3) node[below]{$2$};
\draw (C)++(0,-0.3) node[below]{$3$};
\draw (X)++ (-0.3,0) node[left]{$2$};
\draw (D)++(0,-0.3) node[below]{$4$};
\draw (E)++(0,-0.3) node[below]{$3$};
\draw (F)++(0,-0.3) node[below]{$2$};
\draw (G)++(0,-0.3) node[below]{$1$};
\end{tikzpicture}\\
$\widehat{E}_6$ & \begin{tikzpicture}[scale=0.4]
\tikzstyle{vertex}=[circle,fill=black,minimum size=8pt,inner sep=0pt]
\node[vertex] (A) at (-2,0){};
\node[vertex] (B) at (0,0){};
\node[vertex] (C) at (2,0){};
\node[vertex] (X) at (2,2){}; 
\node[vertex] (Y) at (2,4){};
\node[vertex] (D) at (4,0){};
\node[vertex] (E) at (6,0){};
\draw (A)--(B)--(C)--(D)--(E);
\draw (C)--(X)--(Y);
\draw (A)++(0,-0.3) node[below left]{$1$};
\draw (B)++(0,-0.3) node[below]{$2$};
\draw (C)++(0,-0.3) node[below]{$3$};
\draw (X)++ (-0.3,0) node[left]{$2$};
\draw (Y)++ (-0.3,0) node[left]{$1$};
\draw (D)++(0,-0.3) node[below]{$2$};
\draw (E)++(0,-0.3) node[below]{$1$};
\end{tikzpicture}\\
$\widehat{D}_4$ & \begin{tikzpicture}[scale=0.35]
\tikzstyle{vertex}=[circle,fill=black,minimum size=8pt,inner sep=0pt]
\node[vertex] (A) at (0,0){} ;
\node[vertex] (B) at (0,2){} ; 
\node[vertex] (C) at (0,-2){};
\node[vertex] (D) at (2,0){};
\node[vertex] (E) at (-2,0){};
\draw (A)--(B);
\draw (A)--(C);
\draw (A)--(D);
\draw (A)--(E);
\draw (A)node[below left]{$2$};
\end{tikzpicture} \\
$\widehat{A}_3=\hat{D}_3$ & \begin{tikzpicture}[scale=0.6]
\tikzstyle{vertex}=[circle,fill=black,minimum size=8pt,inner sep=0pt]
\node[vertex] (A) at (1,1){} ;
\node[vertex] (B) at (-1,1){} ; 
\node[vertex] (C) at (-1,-1){};
\node[vertex] (D) at (1,-1){};
\draw (A)--(B)--(C)--(D)--(A);
\end{tikzpicture}\\
$\widehat{D}_2$ & \begin{tikzpicture}[scale=0.6]
\tikzstyle{vertex}=[circle,fill=black,minimum size=8pt,inner sep=0pt]
\node[vertex] (A) at (0,0){} ;
\node[vertex] (B) at (2,0){} ; 
\node[vertex] (C) at (4,0){};
\draw[double distance=2 pt] (A)--(B)--(C);
\draw[dashed] (B) to[out=45, in=0] (2,1.3) to[out=180,in=135] (B);
\end{tikzpicture} \\
$\widehat{D}_1$ & \begin{tikzpicture}[scale=0.5]
\tikzstyle{vertex}=[circle,fill=black,minimum size=8pt,inner sep=0pt]
\node[vertex] (A) at (-2,0){} ; 
\node[vertex] (C) at (2,0){};
\draw (C)-- node[midway,above]{$4$} (A);
\draw[dashed] (A) to[out=-135, in=-90] (-3.4,0) to[out=90,in=135] (A);
\draw[dashed] (C) to[out=-45, in=-90] (3.4,0) to[out=90,in=45] (C);
\draw (4,0) node {$-1$};
\draw (-4,0) node {$-1$};
\end{tikzpicture}\\
$\widehat{D}_0$ & \begin{tikzpicture}[scale=0.5]
\tikzstyle{vertex}=[circle,fill=black,minimum size=8pt,inner sep=0pt]
\node[vertex] (A) at (-2,0){} ; 
\node[vertex] (C) at (2,0){};
\draw (C)-- node[midway,above]{$6$} (A);
\draw[dashed] (A) to[out=-135, in=-90] (-3.4,0) to[out=90,in=135] (A);
\draw[dashed] (C) to[out=-45, in=-90] (3.4,0) to[out=90,in=45] (C);
\draw (4,0) node {$-1$};
\draw (-4,0) node {$-3$};
\end{tikzpicture}\\
$\widehat{A}_2$ & \begin{tikzpicture}[scale=0.6]
\tikzstyle{vertex}=[circle,fill=black,minimum size=8pt,inner sep=0pt]
\node[vertex] (A) at (-30:1.3){} ;
\node[vertex] (B) at (90:1.3){} ; 
\node[vertex] (C) at (210:1.3){};
\draw (A)--(B);
\draw (A)--(C);
\draw (B)--(C);
\end{tikzpicture} \\
$\widehat{A}_1$ & \begin{tikzpicture}[scale=0.6]
\tikzstyle{vertex}=[circle,fill=black,minimum size=8pt,inner sep=0pt]
\node[vertex] (A) at (0,0){} ;
\node[vertex] (B) at (2,0){} ; 
\draw[double distance=2 pt] (A)--(B);
\end{tikzpicture} \\
$\widehat{A}_0$ & \begin{tikzpicture}[scale=0.6]
\tikzstyle{vertex}=[circle,fill=black,minimum size=8pt,inner sep=0pt]
\node[vertex] (A) at (0,0){} ;
\draw (A) to[out=45, in=0] (0,1.3) to[out=180,in=135] (A);
\end{tikzpicture}\\
\hline
\end{tabular}
\end{center}
\caption{Diagrams associated to all known H3 surfaces. The names of the surfaces are as in \cite{boalch2018wild}. All unspecified multiplicities are equal to 1.}
\label{table_H3}
\end{figure}

\subsection*{h-Painlevé systems}  
It is possible to define a notion of h-Painlevé systems  $hP_k^{(n)}$, where h stands for higher or hyperbolic or Hilbert. Higher Painlevé systems $hP_k^{(n)}$ were introduced by Boalch \cite{boalch2008irregular,boalch2012simply}: for each integer $n$ there is a rank $2n$ Lax representation giving rise to a $2n$-dimensional higher Painlevé moduli space. All higher Painlevé systems of a given number have the same diagram, but the dimension vector is a function of $n$. More precisely, the diagram associated to a given higher Painlevé system is obtained by taking the original diagram with all multiplicities scaled by $n$, then adding a leg of length one, with its end vertex having multiplicity 1, as in fig. \ref{fig_higher_painleve_boalch} for higher Painlevé IV,V,VI. The higher Painlevé diagrams are hyperbolic Dynkin diagrams since they are obtained by adding a vertex to an affine Dynkin diagram. In this sense they can be seen as the next simplest examples after the affine case. The higher Painlevé  moduli spaces are (conjecturally) related to Hilbert schemes on $n$ points on the corresponding H3 surface (see \cite{boalch2012simply}). The same recipe for Painlevé III yields the $hP_{III}^{(n)}$ diagram.

\begin{figure}[h]
\begin{center}
\begin{tikzpicture}[scale=0.9]
\begin{scope}[scale=0.45]
\tikzstyle{vertex}=[circle,fill=black,minimum size=6pt,inner sep=0pt]
\node[vertex] (A) at (0,0){} ;
\node[vertex] (B) at (0,2){} ; 
\node[vertex] (C) at (0,-2){};
\node[vertex] (D) at (2,0){};
\node[vertex] (E) at (-2,0){};
\node[vertex] (F) at (4,0){};
\draw (A)--(B);
\draw (A)--(C);
\draw (A)--(D);
\draw (A)--(E)--(F);
\draw (A) node[below left]{$2n$};
\draw (B) node[above]{$n$};
\draw (C) node[below]{$n$};
\draw (D) node[below]{$n$};
\draw (E) node[left]{$n$};
\draw (F) node[right]{$1$};
\end{scope}
\begin{scope}[shift={(3.6,0)},scale=.45]
\tikzstyle{vertex}=[circle,fill=black,minimum size=6pt,inner sep=0pt]
\node[vertex] (A) at (0,0){} ;
\node[vertex] (B) at (4,0){} ; 
\node[vertex] (C) at (2,2){};
\node[vertex] (D) at (2,-2){};
\node[vertex] (E) at (6,0){};
\draw (B)--(E);
\draw (A)--(C)--(B)--(D)--(A);
\draw (A) node[left]{$n$};
\draw (B) node[below right]{$n$};
\draw (2.3,2) node[right]{$n$};
\draw (2.3,-2) node[right]{$n$};
\draw (E) node[right]{$1$};
\end{scope}
\begin{scope}[shift={(8,0)},scale=.45]
\tikzstyle{vertex}=[circle,fill=black,minimum size=6pt,inner sep=0pt]
\node[vertex] (A) at (2,0){} ;
\node[vertex] (B) at (120:2){} ; 
\node[vertex] (C) at (-120:2){};
\node[vertex] (D) at (4,0){};
\draw (A)--(B)--(C)--(A);
\draw (A)--(D);
\draw (A) node[below right]{$n$};
\draw (B) node[above]{$n$};
\draw (C) node[below]{$n$};
\draw (D) node[right]{$1$};
\end{scope}
\begin{scope}[shift={(11,0)},scale=.5]
\tikzstyle{vertex}=[circle,fill=black,minimum size=6pt,inner sep=0pt]
\tikzstyle{vertex_blue}=[circle,fill=blue,minimum size=6pt,inner sep=0pt]
\node[vertex] (A) at (0,0){} ;
\node[vertex] (B) at (2,0){} ; 
\node[vertex] (C) at (4,0){};
\node[vertex] (D) at (6,0){};
\draw[double distance=2 pt] (A)--(B)--(C);
\draw (C)--(D);
\draw[dashed] (B) to[out=45, in=0] (2,1.3) to[out=180,in=135] (B);
\draw (0,-0.2) node[below]{$n$};
\draw (2,-0.2) node[below]{$n$};
\draw (4,-0.2) node[below]{$n$};
\draw (6,-0.2) node[below]{$1$};
\end{scope}
\end{tikzpicture}
\end{center}
\caption{Diagrams for higher Painlevé systems $hP_{VI}^{(n)}$, $hP_{V}^{(n)}$, $hP_{IV}^{(n)}$, and $hP_{III}^{(n)}$.}
\label{fig_higher_painleve_boalch}
\end{figure}

\subsection*{Some 4-dimensional cases}

Another point of view on higher dimensional isomonodromy systems consists in looking at degenerations of an equation coming via isomonodromy from a fuchsian connection. This approach is used in \cite{kawakami2018degeneration} to list some representations of some 4-dimensional isomonodromy systems: they are defined there as the systems coming via degeneration from the isomonodromic deformation equations associated to connections with regular singularities giving rise to 4-dimensional moduli spaces. Whereas for the usual 2-dimensional Painlevé equations there is only one possible choice of formal data for fuchsian connections with non-trivial admissible deformations and 2-dimensional moduli spaces, in the 4-dimensional case there are 4 possible choices. The full degeneration scheme is represented at \cite[p. 40]{kawakami2018degeneration}. Each of the 4 fuchsian formal data gives rise via degeneration to a family of 4-dimensional Painlevé type equations. 

The two points of view on higher-dimensional isomonodromy systems are actually not independent. The elements of the fourth degeneration family of 4-dimensional Painlevé-type equations in the degeneration scheme of \cite{kawakami2018degeneration}  (called there matrix Painlevé systems) correspond to the higher Painlevé equations in the sense of \cite{boalch2012simply} in the 4-dimensional case, and the master diagram $hP_6^{(2)}$ at the top of the coalescence cascade first appeared in the context of Painlev\'e systems in the list of 4-dimensional hyperbolic examples in \cite[p. 12]{boalch2008irregular}. This notion of higher Painlevé systems is however more general than just this 4-dimensional case. 

In figure \ref{our_diagrams_4d_painleve} are drawn the diagrams associated by our approach to the 4-dimensional isomonodromy systems listed in \cite{kawakami2018degeneration} whose Lax representations feature several irregular singularities. These diagrams are to be contrasted with the shapes in \cite[p.235]{hiroe2013classification}. Remarkably, for all instances in the degeneration scheme of \cite{kawakami2018degeneration} where there are different Lax representations for the same Painlevé-type system, they correspond to different readings of the same diagram. 

\begin{figure}[h]
\begin{center}
\begin{tabular}{|m{5cm}|m{5cm}|m{6cm}|}
  \hline
  Name in \cite{kawakami2018degeneration}& Mod. irreg. class & Diagram \\
  \hline
  $H_{\text{Gar}}^{2+2+1}$ & $\cir{\alpha z}_\infty+\cir{\beta z}_\infty + \cir{\gamma z^{-1}}_0 + \cir{0}_1$ & \begin{tikzpicture}[scale=0.4]
\tikzstyle{vertex}=[circle,fill=black,minimum size=8pt,inner sep=0pt]
\tikzstyle{vertex_blue}=[circle,fill=blue,minimum size=8pt,inner sep=0pt]
\node[vertex] (A) at (0,-2){} ;
\node[vertex] (C) at (4,-2){};
\node[vertex] (D) at (2,0){};
\node[vertex] (E) at (2,-4){};
\draw[double distance=2 pt] (A)--(D);
\draw[double distance=2 pt] (C)--(D);
\draw (A)--(E)--(C);
\draw[dashed] (D) to[out=45, in=0] (2,1.3) to[out=180,in=135] (D);
\draw (2.2,0) node[right]{$1$};
\draw (-0.3,-2) node[left]{$1$};
\draw (2,-4.2) node[below]{$1$};
\draw (4.3,-2) node[right]{$1$};
\end{tikzpicture} \\
   $H_{\text{Gar}}^{3+2}$  & $\cir{\alpha z^2}_\infty +\cir{\beta z^2}_\infty +\cir{\gamma z^{-1}}_0$ &\begin{tikzpicture}[scale=0.4]
\tikzstyle{vertex}=[circle,fill=black,minimum size=8pt,inner sep=0pt]
\node[vertex] (A) at (0,-2){} ;
\node[vertex] (C) at (4,-2){};
\node[vertex] (D) at (2,0){};
\draw (A)--(C);
\draw[double distance=2 pt] (A)--(D);
\draw[double distance=2 pt] (C)--(D);
\draw[dashed] (D) to[out=45, in=0] (2,1.3) to[out=180,in=135] (D);
\draw (2.2,0) node[right]{$1$};
\draw (-0.3,-2) node[left]{$1$};
\draw (4.3,-2) node[right]{$1$};
\end{tikzpicture} \\
  $H_{\text{FS}}^{A_3}$ &  $2\cir{\alpha z}_\infty+\cir{\beta z}_\infty + 2\cir{\gamma z^{-1}}_0 $ & \begin{tikzpicture}[scale=0.4]
\tikzstyle{vertex}=[circle,fill=black,minimum size=8pt,inner sep=0pt]
\node[vertex] (A) at (0,0){} ;
\node[vertex] (C) at (4,0){};
\node[vertex] (D) at (2,0){};
\node[vertex] (A1) at (-2,0){};
\node[vertex] (D1) at (2,-2){};
\draw[double distance=2 pt] (A)--(D);
\draw[double distance=2 pt] (C)--(D);
\draw (A)--(A1);
\draw (D)--(D1);
\draw[dashed] (D) to[out=45, in=0] (2,1.3) to[out=180,in=135] (D);
\draw (2.2,0) node[below right]{$2$};
\draw (-0.3,0) node[below]{$2$};
\draw (4.3, 0) node[right]{$1$};
\draw (A1) node[left]{$1$};
\draw (D1) node[right]{$1$};
\end{tikzpicture} \\
  $H_{\text{Gar}}^{3/2+1+1+1}$  & $\cir{\alpha z^{1/2}}_\infty + \cir{0}_a +\cir{0}_b +\cir{0}_c$ &
\begin{tikzpicture}[scale=0.4]
\tikzstyle{vertex}=[circle,fill=black,minimum size=8pt,inner sep=0pt]
\tikzstyle{vertex_blue}=[circle,fill=blue,minimum size=8pt,inner sep=0pt]
\node[vertex] (A) at (0,-2){} ;
\node[vertex] (B) at (2,-2){} ; 
\node[vertex] (C) at (4,-2){};
\node[vertex] (D) at (2,0){};
\draw[double distance=2 pt] (A)--(D);
\draw[double distance=2 pt] (B)--(D);
\draw[double distance=2 pt] (C)--(D);
\draw[dashed] (D) to[out=45, in=0] (2,1.3) to[out=180,in=135] (D);
\draw (2.2,0) node[right]{$1$};
\draw (-0.3,-2) node[left]{$1$};
\draw (2,-2.2) node[below]{$1$};
\draw (4.3,-2) node[right]{$1$};
\end{tikzpicture}\\
  $H_{\text{Ss}}^{D_4}$ & $ 3\cir{\alpha z}_\infty+\cir{\beta z}_\infty + 2 \cir{\gamma z^{-1}}_0$ &
\begin{tikzpicture}[scale=0.4]
\tikzstyle{vertex}=[circle,fill=black,minimum size=8pt,inner sep=0pt]
\tikzstyle{vertex_blue}=[circle,fill=blue,minimum size=8pt,inner sep=0pt]
\node[vertex] (A) at (0,0){} ;
\node[vertex] (B) at (2,0){} ; 
\node[vertex] (C) at (4,0){};
\node[vertex] (D) at (6,0){};
\node[vertex] (E) at (8,0){};
\draw[double distance=2 pt] (A)--(B)--(C);
\draw (C)--(D)--(E);
\draw[dashed] (B) to[out=45, in=0] (2,1.3) to[out=180,in=135] (B);
\draw (0,-0.2) node[below]{$1$};
\draw (2,-0.2) node[below]{$2$};
\draw (4,-0.2) node[below]{$3$};
\draw (6,-0.2) node[below]{$2$};
\draw (8,-0.2) node[below]{$1$};
\end{tikzpicture}\\
$H_{III}^{\text{Mat}}(D_6)$ & $ 2\cir{\alpha z}_\infty+ 2\cir{\beta z}_\infty + 2\cir{\gamma z^{-1}}_0$ &
\begin{tikzpicture}[scale=0.4]
\tikzstyle{vertex}=[circle,fill=black,minimum size=8pt,inner sep=0pt]
\tikzstyle{vertex_blue}=[circle,fill=blue,minimum size=8pt,inner sep=0pt]
\node[vertex] (A) at (0,0){} ;
\node[vertex] (B) at (2,0){} ; 
\node[vertex] (C) at (4,0){};
\node[vertex] (D) at (6,0){};
\draw[double distance=2 pt] (A)--(B)--(C);
\draw (C)--(D);
\draw[dashed] (B) to[out=45, in=0] (2,1.3) to[out=180,in=135] (B);
\draw (0,-0.2) node[below]{$2$};
\draw (2,-0.2) node[below]{$2$};
\draw (4,-0.2) node[below]{$2$};
\draw (6,-0.2) node[below]{$1$};
\end{tikzpicture}\\
 \hline
\end{tabular}
\end{center}
\caption{Diagrams for the Lax representations of 4-dimensional Painlevé-type equations of \cite{kawakami2018degeneration} featuring several irregular singularities. Notice that the last line is none other than a higher Painlevé III system.}
\label{our_diagrams_4d_painleve}
\end{figure}

\begin{remark}
Notice that the diagram associated to the Painlevé-type equation denoted $H_{III}^{\text{Mat}}(D_6)$ in \cite{kawakami2018degeneration} fits in the higher Painlevé picture in the sense of Boalch: it is $hP_3^{(2)}$, obtained from the Painlevé III diagram by taking all multiplicities equal to 2 and adding a vertex with multiplicity 1. Beware that the usual terminology  ``matrix Painlevé equations'' \cite{balandin1998painleve} differs from that of \cite{kawakami2018degeneration}.   
\end{remark}

\begin{remark}
\label{rem:hiroe_diagrams}
It is interesting to compare our diagrams with those of Hiroe \cite{hiroe2014moduli} and Hiroe and Oshima \cite{hiroe2013classification}. In \cite{hiroe2017linear}, Hiroe defines quivers associated to meromorphic connections with several unramified irregular singularities. When there is only one irregular singularity, these quivers coincide with the ones of \cite{boalch2012simply} and in turn with our diagrams, however when they are several irregular singularities they differ from our diagrams. Hiroe also introduces the notion of \textit{shape}, and in their approach the moduli spaces are classified by these shapes. When there are several irregular singularities, the shape differs from the diagram. It is unclear whether there are instances where different Lax pairs for a Painleve-type equation leads to the same shape in the sense. Futhermore, this approach does not allow to define diagrams for the degenerate and doubly degenerate Painlevé equations.
\end{remark}

\newpage

\appendix

\section{Formula for the number of edges}

We now prove the explicit formula for the number of edges (lemma \ref{nb_aretes_cas_general}). Lets us recall the formula:

\begin{lemma} 
\begin{itemize}
\item Assume that $\alpha_{r}/\beta\geq \alpha'_{r}/\beta'$. Then the number of edges between $\cir{q}$ and $\cir{q'}$ is
\begin{align*}
B_{\cir{q},\cir{q'}}= &(\beta'-(\alpha'_,\beta'))\alpha_0 +((\alpha'_0,\beta') - (\alpha'_0,\alpha'_1,\beta'))\alpha_1  + \dots +((\alpha'_0,\dots, \alpha'_{r-2},\beta') - (\alpha'_0,\dots,\alpha'_{r-1},\beta'))\alpha_{r-1} \\&+(\alpha'_0,\dots ,\alpha'_{r-1},\beta') \alpha_{r} - \beta\beta'
\end{align*}
\item In particular, if $q$ and $q'$ have no common parts and $\alpha/\beta\geq \alpha'/\beta'$, then 
\begin{align*}
B_{\cir{q},\cir{q'}}= \beta'(\alpha - \beta).
\end{align*}
\end{itemize}

\end{lemma}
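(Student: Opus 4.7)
The plan is to compute $A_{I,I'} := \Irr(\Hom(\cir{q},\cir{q'}))$ directly; since $B_{I,I'} = A_{I,I'} - \beta\beta'$ by the definition of the core diagram, the formula will follow. The irregular class $\Hom(\cir{q},\cir{q'})$ has rank $\beta\beta'$ and is built from the $\beta\beta'$ differences $q'_j - q_i$ of leaves; summing contributions leaf by leaf and using that irregularity is rank times slope on each circle, one obtains
\[
A_{I,I'} = \sum_{i=0}^{\beta-1}\sum_{j=0}^{\beta'-1} \slope(q'_j - q_i).
\]

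The key combinatorial step is to identify the slope of each $q'_j - q_i$. Parametrize the leaves explicitly by
$q_i = \sum_{k} b_k\, \omega_\beta^{-i\alpha_k}\, z^{-\alpha_k/\beta}$ and
$q'_j = \sum_{k} b'_k\, \omega_{\beta'}^{-j\alpha'_k}\, z^{-\alpha'_k/\beta'}$
with $\omega_\beta = e^{2\pi i/\beta}$ and $\omega_{\beta'} = e^{2\pi i/\beta'}$. For $k<r$ one has $b_k = b'_k$ and the common-slope identity $\alpha_k\beta' = \alpha'_k\beta$, so the $k$-th term of $q'_j-q_i$ is proportional to $\omega_{\beta'}^{-j\alpha'_k} - \omega_\beta^{-i\alpha_k}$, and it vanishes if and only if $(i-j)\alpha_k \equiv 0 \pmod\beta$. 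Define $s(i,j)\in\{0,\ldots,r\}$ to be the smallest $k$ at which this fails; then $\slope(q'_j-q_i) = \alpha_{s(i,j)}/\beta$ for $s<r$, and equals $\alpha_r/\beta$ when $s=r$, using the hypothesis $\alpha_r/\beta \geq \alpha'_r/\beta'$.

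The counting then reduces to a single congruence: the system $(i-j)\alpha_k\equiv 0\pmod\beta$ for $k=0,\ldots,s-1$ is equivalent to $(i-j)\equiv 0 \pmod{M_s}$ where
\[
M_s = \frac{\beta}{\gcd(\beta,\alpha_0,\ldots,\alpha_{s-1})} = \frac{\beta'}{\gcd(\beta',\alpha'_0,\ldots,\alpha'_{s-1})},
\]
the second equality coming from $\alpha_k\beta' = \alpha'_k\beta$. Because $M_s$ divides both $\beta$ and $\beta'$, the number of pairs $(i,j)\in\{0,\ldots,\beta-1\}\times\{0,\ldots,\beta'-1\}$ with $s(i,j)\geq s$ equals $N_{\geq s} = \beta\beta'/M_s = \beta\,\gcd(\beta',\alpha'_0,\ldots,\alpha'_{s-1})$. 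Telescoping,
\[
A_{I,I'} = \sum_{s=0}^{r-1}(N_{\geq s}-N_{\geq s+1})\frac{\alpha_s}{\beta} + N_{\geq r}\frac{\alpha_r}{\beta},
\]
and substituting reproduces the stated expression after subtracting $\beta\beta'$. The second bullet (no common part, $r=0$) collapses to $A_{I,I'} = \beta\beta'\cdot\alpha/\beta = \alpha\beta'$, hence $B_{I,I'} = \beta'(\alpha-\beta)$.

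The main obstacle is the reduction in the third step: verifying carefully that the simultaneous congruences $(i-j)\alpha_k\equiv 0\pmod\beta$ collapse to $(i-j)\equiv 0\pmod{M_s}$ (which uses $\lcm_k(\beta/\gcd(\beta,\alpha_k)) = \beta/\gcd(\beta,\alpha_0,\ldots,\alpha_{s-1})$), and that $M_s$ really divides $\beta'$ by propagating the common-slope identity through the gcd. Once this is pinned down, the telescope and the substitution are routine bookkeeping.
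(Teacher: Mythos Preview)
Your argument is correct and follows the same overarching strategy as the paper: express $\Irr(\Hom(\cir{q},\cir{q'}))$ as the double sum $\sum_{i,j}\slope(q'_j-q_i)$, determine each slope by locating the first non-cancelling coefficient in the common part, and count how many pairs fall at each level. The telescoping and the final substitution match exactly.

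The bookkeeping, however, is organised differently. The paper first uses the diagonal shift $(i,j)\mapsto(i+1,j+1)$ to reduce the $\beta\beta'$ differences to $\delta=\gcd(\beta,\beta')$ representatives $q_0-q'_j$ ($0\le j<\delta$), multiplies by $\mu=\lcm(\beta,\beta')$, and introduces auxiliary integers $\gamma_k$ with $\alpha_k/\beta=\alpha'_k/\beta'=\gamma_k/\delta$; the counting is then done via the conditions $j\gamma_k\equiv 0\pmod\delta$ and only at the end converted back to $(\alpha'_k,\beta')$. You bypass this change of variables entirely: working with all $\beta\beta'$ pairs, you collapse the simultaneous conditions $(i-j)\alpha_k\equiv 0\pmod\beta$ to a single congruence modulo $M_s=\beta/\gcd(\beta,\alpha_0,\dots,\alpha_{s-1})$, check via the common-slope identity that $M_s$ also equals $\beta'/\gcd(\beta',\alpha'_0,\dots,\alpha'_{s-1})$ (hence divides $\beta'$), and count $N_{\ge s}=\beta\beta'/M_s$ directly. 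Your route is slightly more streamlined and avoids the auxiliary $\gamma_k,\delta,k,k'$; the paper's reduction to a single sum makes the Galois-orbit structure more visible. Both reach the same formula, and the $\lcm$ identity you flag as the main obstacle is indeed the only point requiring care.
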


\begin{proof}
To compute the number of edges between $\cir{q}$ and $\cir{q'}$ we have to determine the local system $\Hom(\cir{q},\cir{q'})$ as well as its irregularity. For this we have to look at the differences $q_i-q'_j$, with $i=0,\dots \beta-1, j=0,\dots, \beta'-1$ between all possible leaves $\cir{q}$ of $\cir{q'}$, find which connected components of $\Ical$ they fall into and what the irregularity of those circles is. The subtlety is that the degree of $q_i-q_j$ depends in general of $i$ and $j$, so that the circles will not have the same irregularity.

Let $\mu$ be the smallest common multiple of $\beta$ and $\widetilde{\beta}$. We set
\[ \mu=k \beta, \qquad \mu=k'\beta'.
\]
Let us also denote by $\delta$ the greatest common divisor of $\beta$ and $\beta'$. We have 
\[ \beta=k' \delta, \qquad \beta'=k\delta.
\]
For $0\leq i \leq r$, let $\gamma_i$ be the integer such that
\[\frac{\alpha_i}{\beta}=\frac{\alpha'_i}{\beta'}=\frac{\gamma_i}{\delta}.\] 
Any difference $q_i-q'_j$ belongs to a circle $I=\cir{q_i-q'_j}$ which is a connected component of $\Hom(\cir{q},\cir{q'})$. However, several differences $q_i-q'_j$ belong to the same connected component: each difference $q_i-q_j$ corresponds to exactly one leaf of such a circle. This implies the following formula:
\begin{equation}
\Irr(\Hom(\cir{q},\cir{q'}))=\sum_{i=0}^{\beta-1}\sum_{j=0}^{\beta'-1}\slope(q_i-q'_j).
\label{Irr_Hom}
\end{equation}
Indeed, regrouping the terms of the sum according to the connected components, to each connected component $I$ of ramification order $r$ and irregularity $s$ correspond $r$ differences $q_i-q'_j$ in the sum, with slope equal to $\slope(q_i-q'_j)=s/r$. The total contribution of those terms is thus equal to $s=\Irr(I)$.

The computation can be simplified by noticing the following fact: for any $k\in \Z$, 
\[
\slope(q_i-q_j)=\slope(q_{i+k}-q'_{j+k}),
\]
where the index $i+k$ is seen as an element of $\Z/\beta \Z$, and $j+k$ is seen as an element of $\Z/\beta' \Z$. Under this shifting action of $\Z$, the set $\Z/\beta \Z\times \Z/\beta' \Z$ is partitioned into $\delta$ orbits, each having cardinal $\mu$. Furthermore, the differences
\[ q_0-q'_j, \; j=0,\dots, \delta-1,
\]
belong to distinct orbits and thus yield one representative of each orbit.
The formula \eqref{Irr_Hom} therefore becomes
\begin{align*}
\Irr(\Hom(\cir{q},\cir{q'}))&=\mu \sum_{j=0}^{\delta-1} \slope(q_0-q_j)\\
&=\sum_{j=0}^{\delta-1} \deg_{z^{-1/\mu}}(q_0-q_j).
\end{align*}

The task is thus reduced to computing the degree of the differences $ q_0-q'_j$, $j=0,\dots,\delta-1$ as polynomials in $z^{-1/\mu}$. It is the exponent of the largest monomial having different coefficients in  $q_0$ and $q'_j$.

The common part $q_c=q'_c$ is given by 
\[ q_c=q'_c=\sum_{i=0}^r b_i z_\infty^{-\alpha_i/\beta}=\sum_{i=0}^r b_i z_\infty^{-\alpha'_i/\beta'}=\sum_{i=0}^r b_i z_\infty^{-\gamma_i/\delta}=\sum_{i=0}^r b_i z_\infty^{-kk'\gamma_i/\mu}.
\]
We will start by determining the number of indices $j=0\in \{0,\dots,\delta-1\}$ such that this degree is the maximal possible degree $kk'\gamma_0$, then the number of indices for which it is $kk'\gamma_1$, etc.

Let us thus begin by computing the number of differences with degree $kk'\gamma_0$. We consider the coefficient of $z_\infty^{kk'\gamma_0/\mu}$ in the difference $q_0-q'_j$, $j=0,\dots,\delta-1$: the corresponding term is
\[ a_0(z_\infty^{kk'\gamma_0/\mu}-e^{2i\pi j\gamma_0/\delta}z_\infty^{kk'\gamma_0/\mu})= a_0 (1-e^{2i\pi j\gamma_0/\delta})z_\infty^{kk'\gamma_0/\mu}.
\] 
The factor $1-e^{2i\pi j\gamma_0/\delta}$ is zero if and only if $j$ is an integer multiple of $\delta/(\gamma_0, \delta)$. There are thus $(\gamma_0,\delta)$ differences $q_0-q'_j$ having a degree strictly less than $kk'\gamma_0$. The $\delta-(\gamma_0,\delta)$ other differences $q_0-q'_j$ have degree $kk'\gamma_0$. Each one of these contributes to $kk'\gamma_0$ Stokes arrows from $\cir{q}$ to $\cir{q'}$. 

Next, we compute the number of differences $q_0-q'_j, j=0,\dots \delta-1,$ whose degree is $kk'\gamma_1$. In $q_0-q_j$ the monomial of degree $kk'\gamma_1$ is 
\[ a_1(1-e^{2i\pi j\gamma_1/\delta})z_\infty^{kk'\gamma_1/\mu}.
\]
As previously, this term is non-zero when $j$ is an integer multiple of $\delta/(\gamma_1,\delta)$. It follows that $q_0-q'_j$ has degree strictly less than $kk'\gamma_1$ when $j$ is both a multiple of $\delta/(\gamma_1 , \delta)$ and of $\delta/(\gamma_0,\delta)$, i.e. is a multiple of their lowest common multiple $\lcm\left(\frac{\delta}{(\gamma_0,\delta)}, \frac{\delta}{(\gamma_1,\delta)}\right)$. Since we have the equality
\[ \lcm\left(\frac{\delta}{(\gamma_0,\delta)}, \frac{\delta}{(\gamma_1,\delta)}\right)=\frac{\delta}{(\gamma_0,\gamma_1,\delta)},
\]
we conclude that the number of differences $q_0-q'_j$, $j=0,\dots,\delta-1$ having degree strictly less than $kk'\gamma_1$ is equal to $(\gamma_0, \gamma_1,\delta)$. Therefore, the number of differences having degree equal to  $kk'\gamma_1$ is $(\gamma_0,\delta)-(\gamma_0,\gamma_1,\delta)$. Each if these differences gives rise to $kk'\gamma_1$ Stokes arrows from $\cir{q}$ to $\cir{q'}$.

The same reasoning can be carried out for the next terms in $q_c=q'_c$. By induction, we find that for $1\leq i\leq r$, the number of differences $q_0-q'_j$, $j=0,\dots,\delta-1$ with degree $kk'\gamma_i$ is equal to the difference $(\gamma_0,\dots ,\gamma_{i-1}, \delta)-(\gamma_0,\dots ,\gamma_{i}, \delta)$, and each of those circles gives rise to $kk'\gamma_i$ Stokes arrows between $\cir{q}$ and $\cir{q'}$.

There only remain $(\gamma_0, \dots ,\gamma_{r-1}, \delta)$ differences having degree strictly less than $kk'\gamma_{r-1}$. For those differences, we have $q_{c,0}-q'_{c,j}=0$, so only the different parts matter: $q_0-q'_j=q_{d,0}-q'_{d,j}$. Now, the leading term of $q_d$ is $b_{r}z_\infty^{-\alpha_{r}/\beta}=b_{r}z_\infty^{-k\alpha_{r}/\mu}
$, so $q_d$ has degree $k\alpha_{r}$ as a polynomial in $z^{-1/\mu}$, and $q'_d$ has degree $k'\alpha'_{r}$ in $z^{-1/\mu}$. It follows that the degree of $q_0-q'_j$ is $\max(k\alpha_{r}, k'\alpha'_{r})$, since  we have assumed that $\alpha_{r}/\beta\geq \alpha'_{r}/\beta'$, the maximum is $k\alpha_{r}$. Each of these thus accounts for $k\alpha_{r}$ Stokes arrows from $\cir{q}$ to $\cir{q'}$.

Adding all contributions, and subtracting the $\beta\beta'$ arrows appearing in the definition of $B_{\cir{q},\cir{q'}}$, we get the following expression for the number of edges between $\cir{q}$ and $\cir{q'}$: 
\begin{align*}
B_{\cir{q},\cir{q'}}= &(\delta-(\gamma_0, \delta))kk'\gamma_0 +((\gamma_0 ,\delta) - (\gamma_0, \gamma_1 ,\delta))kk'\gamma_1 + \dots +((\gamma_0 ,\dots , \gamma_{r-2},\delta) - (\gamma , \dots, \gamma_{r-1} ,\delta))kk'\gamma_{r-1} \\&+(\gamma_0, \dots ,\gamma_{r-1},\delta) k\alpha_{r} - \beta\beta'.
\end{align*}
Since $k\delta=\beta'$, $k\gamma_i=\alpha'_i$, and $k'\gamma_i=\alpha_i$, this yields the desired formula.

The case where $q$ and $q'$ have no common part corresponds to having $r=0$. In the formula, all terms corresponding to the common part disappear, there only remains $B_{\cir{q},\cir{q'}}=\beta'(\alpha-\beta)$.
\end{proof}

The similar formula for the number of loops is 

\begin{lemma}
Let $q=\sum_{j=0}^{p} b_j z_\infty^{-\alpha_j/\beta}$ be an exponential factor of slope $\alpha_0/\beta>1$ as before.

\begin{itemize}
\item One has

\begin{equation}
B_{q,q}=(\beta-(\alpha_0,\beta))\alpha_0 +((\alpha_0, \beta)-(\alpha_0,\alpha_1,\beta))\alpha_1 +\dots +((\alpha_0,\dots, \alpha_{p-1}, \beta)-(\alpha_0,\dots,\alpha_p,\beta))\alpha_p-\beta^2+1.
\end{equation}
\item Otherwise, if $(\alpha,\beta)=1$, then we have 
\begin{equation}
B_{q,q}=(\beta-1)(\alpha-\beta-1).
\end{equation}
\end{itemize}
\end{lemma}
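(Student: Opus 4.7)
The plan is to mimic the proof of Lemma \ref{nb_aretes_cas_general} (number of edges between two distinct circles) in the special case $q' = q$, being careful to isolate the diagonal terms $q_i - q_i = 0$. By definition $B_{I,I} = A_{II} - \beta^2 + 1$ with $A_{II} = \Irr(\Hom(\cir{q},\cir{q}))$, so the task reduces to computing $A_{II}$, and then the Cartan shift by $-\beta^2+1$ gives the stated formula.

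First I would enumerate the leaves $q_j = \sum_k b_k \omega^{-\alpha_k j} z_\infty^{-\alpha_k/\beta}$ for $j = 0, \dots, \beta-1$, with $\omega = e^{2i\pi/\beta}$. Exactly as in the proof of Lemma \ref{nb_aretes_cas_general}, summing slopes over connected components of $\Hom(\cir{q},\cir{q})$ leaf-by-leaf and exploiting the shift symmetry $\slope(q_i - q_j) = \slope(q_{i+k} - q_{j+k})$ gives
\[
\Irr(\Hom(\cir{q},\cir{q})) = \sum_{j=0}^{\beta-1} \deg_{z_\infty^{-1/\beta}}(q_0 - q_j).
\]
The key combinatorial step is then to determine, for each $i = 0,\dots,p$, how many $j$'s yield $\deg(q_0-q_j) = \alpha_i$. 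The coefficient of $z_\infty^{-\alpha_k/\beta}$ in $q_0 - q_j$ is $b_k(1 - \omega^{-\alpha_k j})$, which vanishes exactly when $j$ is a multiple of $\beta/(\alpha_k,\beta)$. Therefore the set of $j$ for which the top $i+1$ monomials all cancel has cardinality $(\alpha_0,\dots,\alpha_i,\beta)$. By successive differences, the number of $j$ with $\deg(q_0-q_j) = \alpha_i$ is $(\alpha_0,\dots,\alpha_{i-1},\beta) - (\alpha_0,\dots,\alpha_i,\beta)$ (with the convention $(\alpha_0,\dots,\alpha_{-1},\beta)=\beta$).

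The only conceptual subtlety compared with the previous lemma is the diagonal contribution: the $j$'s for which $q_0 - q_j = 0$ contribute slope $0$ and must not be counted as having positive degree. These are exactly the $j$'s that are multiples of $\beta/(\alpha_k,\beta)$ simultaneously for all $k$, whose number is $(\alpha_0,\dots,\alpha_p,\beta)$. But since $\beta = \ram(q)$ is by definition the smallest common denominator, one has $(\alpha_0,\dots,\alpha_p,\beta) = 1$, so only $j=0$ gives the zero difference. This is exactly what the last summand $((\alpha_0,\dots,\alpha_{p-1},\beta) - (\alpha_0,\dots,\alpha_p,\beta))\alpha_p$ encodes, and adding all contributions and applying $B_{I,I} = A_{II} - \beta^2 + 1$ yields the first formula.

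For the special case $(\alpha,\beta) = 1$: since $(\alpha_0,\beta) = 1$, all further gcds $(\alpha_0,\dots,\alpha_i,\beta)$ are also equal to $1$, so every summand with $i \ge 1$ vanishes. What remains is
\[
B_{I,I} = (\beta-1)\alpha - \beta^2 + 1 = (\beta-1)(\alpha-\beta-1),
\]
as claimed. The main obstacle is bookkeeping in the gcd telescoping, in particular checking that the diagonal term $j=0$ is correctly absorbed into the last summand via the minimality of $\beta$; the rest is parallel to the computation already carried out for two distinct circles.
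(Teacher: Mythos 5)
Your proposal is correct and follows essentially the same route as the paper's proof: reduce via the shift symmetry to the $\beta$ differences $q_0-q_j$, count how many have degree $\alpha_i$ in $z_\infty^{-1/\beta}$ by the telescoping gcd argument, and subtract $\beta^2-1$. Your extra remark that $(\alpha_0,\dots,\alpha_p,\beta)=1$ by minimality of $\beta$ (so only $j=0$ gives the zero difference) is a correct refinement the paper leaves implicit, but it does not change the argument.
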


\begin{proof}
The proof is similar to the case of two different circles. As previously we have
\[
\Irr(\End(\cir{q}))=\sum_{i=0}^{\beta-1}\sum_{j=0}^{\beta-1}\slope(q_i-q_j)=\sum_{j=0}^{\beta-1}\slope(q_0-q_j).
\label{Irr_End}
\]
Among those $\beta$ differences, we determine how many have degree $\alpha_0, \dots \alpha_p$, as polynomials in $z^{-1/\beta}$, and the remaining differences will have degree $0$. We find that the number of differences with degree $\alpha_0$ is $\beta-(\alpha_0, \beta)$, the number of differences with degree $\alpha_1$ is $(\alpha_0, \beta)-(\alpha_0,\alpha_1,\beta)$, etc. One has $(\alpha_0,\dots, \alpha_{p-1}, \beta)-(\alpha_0,\dots,\alpha_p,\beta)$ differences with degree $\alpha_p$, and the $(\alpha_0,\dots,\alpha_p,\beta)$ remaining differences belong to connected components that are copies of $\cir{0}$. Each difference of degree $\alpha_i$ accounts for $\alpha_i$ Stokes arrows. The total number of (positive) Stokes arrows is thus
\[ (\beta-(\alpha_0, \beta))\alpha_0 +((\alpha_0,\beta)-(\alpha_0,\alpha_1, \beta))\alpha_1 +\dots +((\alpha_0,\dots,\alpha_{p-1},\beta)-(\alpha_0,\dots\alpha_p, \beta))\alpha_p.
\]
To this we must subtract a number of arrows equal to $\beta(\beta-1)+(\beta-1)=\beta^2-1$, which gives the desired formula. 

When $\alpha$ and $\beta$ are relatively prime, only the first term remains: the number of positive Stokes arrows is $(\beta-1)\alpha$, and the conclusion follows.
\end{proof}

\section{Form of the Legendre transform}

We now prove the formula for the levels of the Legendre transform (lemma \ref{forme_de_tilde(q)}).

\begin{lemma}
Let $q=\sum_{j=0}^{p} b_j z^{\alpha_j/\beta}$, with $b_i\neq 0$,be an exponential factor at infinity. We set $\alpha:=\alpha_0$, so the slope of $q$ is $\alpha/\beta>1$. Then the exponents of $\xi$ possibly appearing with non-zero coefficients in its Legendre transform $\widetilde{q}$ are of the form $\frac{\alpha-k_1(\alpha-\alpha_1)-\dots-k_p(\alpha-\alpha_p)}{\alpha-\beta}$, with $k_1,\dots,k_p\geq 0$. More precisely, if we set $E:=\{ \gamma\in \N \;|\;\exists k_1,\dots, k_p \in \N, \gamma=k_1(\alpha-\alpha_1)+\dots+k_p(\alpha-\alpha_p) \}$
the Legendre transform has the form
\begin{equation}
\widetilde{q}(\xi)=\sum_{\substack{\gamma \in E \\ \alpha-\gamma >0}}\widetilde{b}_{\gamma}\xi^{\frac{\alpha-\gamma}{\alpha-\beta}},
\end{equation} 
where the sum is restricted to the terms such that the exponent $\frac{\alpha-\gamma}{\alpha-\beta}$ is positive. Furthermore, the coefficients $\widetilde{b}_{(\alpha-\alpha_i)}$ are non-zero for $i\geq 1$. 
\end{lemma}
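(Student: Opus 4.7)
The starting point is the standard Legendre-transform identity $\widetilde{q}\,'(\xi) = -z$, where $z = \phi^{-1}(\xi)$. This reduces everything to expanding $z$ as a Puiseux series in $\xi$, since integrating term by term then recovers $\widetilde{q}$ up to an additive constant.

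To carry out the expansion, my plan is to invert $\xi = q\,'(z)$ by Lagrange iteration. Factoring out the dominant term gives
\[
\xi = \frac{b_0 \alpha}{\beta}\, z^{(\alpha-\beta)/\beta}\Bigl(1 + \sum_{j=1}^p \frac{b_j \alpha_j}{b_0 \alpha}\, z^{-(\alpha-\alpha_j)/\beta}\Bigr).
\]
Setting $W := z^{(\alpha-\beta)/\beta}$ and $r_j := (\alpha-\alpha_j)/(\alpha-\beta) > 0$ rewrites this as $\xi = C_0\, W\,(1 + R(W))$ where $R(W) := \sum_{j\geq 1}\, c_j W^{-r_j}$. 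Iterating the fixed-point equation $W = (\xi/C_0)(1+R(W))^{-1}$ starting from $W_0 = \xi/C_0$ produces $W$ as a Puiseux series in $\xi$ whose exponents all lie in $\{\,1 - \sum_{j\geq 1} k_j r_j : k_j \in \mathbb{N}\,\}$: only powers $W^{-r_j}$ enter at each step, so no exponent outside this lattice can ever appear. Raising to the power $\beta/(\alpha-\beta)$ and expanding with the binomial series preserves the lattice, yielding
\[
z = \sum_{\gamma \in E} c_\gamma\, \xi^{(\beta-\gamma)/(\alpha-\beta)}, \qquad c_0 \neq 0,
\]
with $E$ the additive submonoid of $\mathbb{N}$ generated by $\alpha-\alpha_1,\dots,\alpha-\alpha_p$ as in the statement.

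Integrating $\widetilde{q}\,'(\xi) = -z$ term by term shifts each exponent by $+1$, producing terms $\xi^{(\alpha-\gamma)/(\alpha-\beta)}$ for $\gamma \in E,\ \gamma \neq \alpha$, with coefficients $\widetilde{b}_\gamma = -c_\gamma (\alpha-\beta)/(\alpha-\gamma)$. The one potential obstruction is a $\log\xi$ contribution from $\gamma = \alpha$, which I would rule out using the equivalent closed form
\[
\widetilde{q}(\xi) = q(z) - \xi z = \sum_{j=0}^p b_j\bigl(1 - \alpha_j/\beta\bigr)\, z^{\alpha_j/\beta},
\]
obtained by substituting $\xi z = z\,q'(z)$; the right-hand side is manifestly a Puiseux series in $\xi$ without any logarithm. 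Restricting to the positive exponents $(\alpha-\gamma)/(\alpha-\beta) > 0$ then gives the claimed polynomial form of $\widetilde{q}$.

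The main obstacle I anticipate is the non-vanishing claim $\widetilde{b}_{\alpha-\alpha_i} \neq 0$ for $i \geq 1$. At first order in the iteration the coefficient of $\xi^{1-r_i}$ in $W$ equals $-c_i/C_0$, which propagates to a contribution to $c_{\alpha-\alpha_i}$ that is linear in $b_i$ and hence nonzero. When $\alpha-\alpha_i$ admits further decompositions as sums $\sum_k n_k(\alpha-\alpha_{j_k})$ using the other generators, however, higher iterations contribute additional terms at this same exponent, and the hard part will be to book-keep those contributions so as to preclude cancellation with the linear-in-$b_i$ term. The base case $i=1$ is immediate since $\alpha-\alpha_1$ is the smallest generator of $E$ and admits no nontrivial decomposition; for $i > 1$ I would argue inductively on $i$, with the detailed combinatorics of the higher-order contributions relegated to the appendix.
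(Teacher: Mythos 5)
Your route to the structural claim --- inverting $\xi=q'(z)$ as a Puiseux series whose exponents lie in $\{(\beta-\gamma)/(\alpha-\beta):\gamma\in E\}$, then integrating $\widetilde q\,'(\xi)=-z$ --- is essentially the paper's. The paper substitutes the lattice form as an ansatz into $\xi=q'(z)$ and identifies coefficients by induction on the elements of $E$, invoking uniqueness of the Puiseux solution, rather than iterating a fixed point; the two mechanisms are equivalent and both correctly show that the monoid $E$ is stable under the re-expansion. Your use of the closed form $\widetilde q = q(z)-\xi z=\sum_j b_j(1-\alpha_j/\beta)\,z^{\alpha_j/\beta}$ to exclude a logarithmic term at $\gamma=\alpha$ is a legitimate extra touch that the paper leaves implicit.

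The genuine gap is the non-vanishing of $\widetilde b_{\alpha-\alpha_i}$ for $i\geq 1$. You correctly identify the danger --- when $\alpha-\alpha_i$ admits several decompositions in $E$, higher-order contributions to $c_{\alpha-\alpha_i}$ could cancel the one linear in $b_i$ --- but you do not resolve it; ``book-keep those contributions'' and ``argue inductively on $i$'' is a statement of the problem, not a proof, and this clause of the lemma is exactly what the Fourier-invariance argument later relies on to locate the leading terms of the different parts of the transformed factors. The paper closes the gap without tracking the iteration at all: since $q'(z(\xi))=\xi$ identically, the coefficient $d_{\alpha-\alpha_j}$ of $\xi^{1-(\alpha-\alpha_j)/(\alpha-\beta)}$ in the re-expanded right-hand side must vanish. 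In that coefficient, the decomposition with $k=j$ and all $l_\gamma=0$ contributes $b''_j\neq 0$, and the decomposition with $k=0$ and $l_{\alpha-\alpha_j}=1$ contributes $\tfrac{\alpha-\beta}{\beta}\,b''_0\,c'_{\alpha-\alpha_j}$; the paper argues these are the only two contributions, so the relation $0=b''_j+\tfrac{\alpha-\beta}{\beta}b''_0\,c'_{\alpha-\alpha_j}$ forces $c'_{\alpha-\alpha_j}\neq 0$, hence $\widetilde b_{\alpha-\alpha_j}\neq 0$ after integration. This extraction of a single linear relation from the identity $\xi=q'(z(\xi))$ is the idea your proposal is missing.
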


\begin{proof}
The proof consists in computing the Legendre transform directly from the system of equations by which it is defined, in a way explicit enough to find the order of the terms which appear. The first equation of the system is
\begin{equation}
\frac{dq}{dz}=\xi,
\label{x_fct_de_xi_2}
\end{equation}
it is interpreted as determining $\xi$ as a function of $z$. We will show that this implies $\xi$ is of the form
\begin{equation}
z=\sum_{\gamma \in E}c_{\gamma}\xi^{\frac{\beta-\gamma}{\alpha-\beta}}.
\label{forme_de_x(xi)}
\end{equation}
The second equation of the system then yields $\frac{d\widetilde{q}}{d \xi}=-z$, and the lemma follows by integrating.

Equation \eqref{x_fct_de_xi_2} has a solution $z(\xi)$ in the field of Puiseux series in the variable $\xi$ which is unique once we fix a choice of $\alpha-\beta$-th root. To show  \eqref{forme_de_x(xi)}, we thus can take this form of solution as an ansatz, and check that it gives  a unique solution for the coefficients of this expression: this will automatically be the solution $z(\xi)$. So let us assume \eqref{forme_de_x(xi)}. We set
\[
\frac{dq}{dz} = \sum_{j=0}^{p} b'_j z^\frac{\alpha_j-\beta}{\beta}.
\]
This implies
\[
\xi=\frac{d q}{d z} = \sum_{j=0}^{p} b'_j \left(\sum_{\gamma \in E}c_{\gamma}\xi^{\frac{\beta-\gamma}{\alpha-\beta}}\right)^\frac{\alpha_j-\beta}{\beta}.
\]
We develop this expression to identify the coefficients. We have
\begin{align*}
\xi&=\sum_{j=0}^p b'_j \left(\sum_{\gamma}c_{\gamma}\xi^{\frac{\beta-\gamma}{\alpha-\beta}}\right)^\frac{\alpha_j-\beta}{\beta}\\
&=\sum_{j=0}^p b'_j \xi^{-\frac{\alpha_j-\beta}{\alpha-\beta}}\left(\sum_{\gamma}c_{\gamma}\xi^{\frac{-\gamma}{\alpha-\beta}}\right)^\frac{\alpha_j-\beta}{\beta}\\
&= \sum_{j=0}^p b''_j \xi^{1-\frac{\alpha-\alpha_j}{\alpha-\beta}}\left(1+\sum_{\gamma\neq 0}c'_{\gamma}\xi^{\frac{-\gamma}{\alpha-\beta}}\right)^\frac{\alpha_j-\beta}{\beta}\\
&= \sum_{j=0}^p b''_j \xi^{1-\frac{\alpha-\alpha_j}{\alpha-\beta}}\left(\sum_{{(l_{\gamma})}_{\gamma\neq 0}}\prod_{\{\gamma,\; l_{\gamma}\neq 0\}}A^{(j)}_{l_{\gamma}} {c'_{\gamma}}^{l_{\gamma}}\xi^{\frac{-l_\gamma\gamma}{\alpha-\beta}}\right)\\
&=\sum_{j,(l_{\gamma})}b''_j \left(\prod_{\{\gamma,\; l_{\gamma}\neq 0\}}A^{(j)}_{l_{\gamma}} {c'_{\gamma}}^{l_{\gamma}}\right)\xi^{\left(1-\frac{(\alpha-\alpha_j)+\sum_{\gamma} l_{\gamma} \gamma}{\alpha-\beta}\right)}\\
&=\sum_{\delta \geq 0} d_\delta \xi^{1-\frac{\delta}{\alpha-\beta}},
\end{align*}
with
\begin{equation}
d_\delta=\sum_{\substack{j,(l_{\gamma})\\ \sum_{\gamma} l_{\gamma} \gamma+(\alpha-\alpha_j)=\delta}} b''_j \left(\prod_{\{\gamma,\; l_{\gamma}\neq 0\}}A^{(j)}_{l_{\gamma}} {c'_{\gamma}}^{l_{\gamma}}\right).
\label{identification_legendre}
\end{equation}

In the calculation, we have set $b''_j=b'_j c_0^{(\alpha_j-\beta)/\beta}$, $c'_\gamma=c_\gamma/c_0$, and the $A^{(j)}_{l_\gamma}$ are the combinatorial coefficients appearing in the series expansion of the term $(1+\dots)^{(\alpha_j-\beta)/\beta}$. 

The crucial point in this computation is that the integers $\delta=\sum_{\gamma} l_{\gamma} \gamma+(\alpha-\alpha_j)$ still belong to the set of exponents $E$. This is the reason why our ansatz is correct. This enables us to find the coefficients $c_\gamma$ by induction: 
\begin{itemize}
\item For $\delta=0$, we have $d_0=b''_0$. By comparing the terms in $\xi$, we find $1=d_0=b''_0$, then $c_0$. This is where the choice of $\alpha-\beta$-th root takes place. 
\item Let $\delta\in E$. Assume that we know all $c_\gamma$ for $\gamma\in E$ such that $\gamma\leq \delta$. Let $\delta'$ be the smallest element of $E$ strictly greater than $\delta$. Equation \eqref{x_fct_de_xi_2} gives $d_{\delta'}=0$. But $\delta'$ is a sum of terms featuring the $c'_\gamma$ for $\gamma\leq \delta'$, and of a single term featuring $d_{\delta}'$, equal to 
\[ b''_0 A^{(0)}_1 c'_{\delta_1}= \frac{\alpha_0-\beta}{\beta}c'_{\delta'}
\]
since we have  $A^{(0)}_1=\frac{\alpha_0-\beta}{\beta}\neq 0$. All the other terms being known by induction hypothesis, this determines $c'_{\delta'}$ and hence $c_{\delta'}$, in a unique way.  
\end{itemize}
It remains to see that the first subleading term associated to each $\alpha_i$, that is $\tilde{b}_{(\alpha-\alpha_i)}$, is non-zero. For this we look at the coefficient $\delta_{(\alpha-\alpha_j)}$. Equation \eqref{forme_de_x(xi)} implies that $\delta_{(\alpha-\alpha_j)}=0$. On the other hand $\delta_{(\alpha-\alpha_j)}$ is given by \eqref{identification_legendre} with $\delta=\alpha-\alpha_j$. Let us write down this equation more explicitly. There are only two decompositions $\alpha-\alpha_j$ of the form
\[ \alpha-\alpha_j=\sum_{\gamma\in E} l_\gamma \gamma + (\alpha-\alpha_k),
\]
with $l_\gamma\geq 0, k\in \{0,\dots,p\}$:
\[
\alpha-\alpha_j= 0 + (\alpha-\alpha_j),\text{ and } \alpha-\alpha_j= 1\times (\alpha-\alpha_j) + \underbrace{(\alpha-\alpha_0)}_{=0}.
\]
Equation \eqref{identification_legendre} thus yields
\[
0=d_{\alpha-\alpha_j}=b''_j\times 1 + b''_0 A^{(j)}_{\alpha-\alpha_j}c'_{\alpha-\alpha_j}.
\]
Since $b''_0,b''_j$ and $A^{(j)}_{\alpha-\alpha_j}$ are non-zero, this implies that $c'_{\alpha-\alpha_j}\neq 0$, and the conclusion follows.

\end{proof}

\section{Computation of the dimension}

We now turn to the proof of the formula for the dimension of the wild character variety (theorem \ref{theorem_dimension}):

\begin{theorem}
The dimension of the wild character variety $\mathcal{M}_B(\bm\Theta,\bm{\Ccal})$, when it is non-empty, is given by
\begin{equation}
\dim \mathcal{M}_B(\bm\Theta,\bm{\Ccal})=2-(\mathbf{d},\mathbf{d}),
\end{equation}
where $(\cdot,\cdot)$ is the bilinear form associated with the diagram $\Gamma(\bm\Theta,\bm{\Ccal})$, and $\mathbf{d}$ is the dimension vector of $\Gamma(\bm\Theta,\bm{\Ccal})$, for any choice of marking $\bm\xi$. 
\end{theorem}

To prove this, we are going to use the twisted quasi-Hamiltonian description of the wild character variety given in \cite{boalch2015twisted}, to compute its  dimension. Keeping previous notations, the wild character variety $\mathcal{M}_B(V)$ is obtained as a symplectic reduction of the twisted quasi-Hamiltonian space 
\[  \Hom_{\mathbb{S}}(V)\simeq \Acal(V^0_{a_1}) \circledast \dots \circledast \Acal(V^0_{a_m})\sslash G. 
\]
Here, each piece $\Acal(V^0_{a_k})$ is a twisted quasi-Hamiltonian $H(\partial_{a_k})\times G$-space: 
\[ \Acal(V^0_{a_k})=H(\partial_{a_k})\times G \times \prod_{d\in \A_{a_k}}\Sto_d,
\]
where $\A_{a_k}$ is the set of singular directions of the connection at $a_k$, and $\Sto_d$ is the Stokes group associated to the singular direction $d\in \A_{a_k}$. Hence,
$\Hom_{\mathbb{S}}(E,\nabla)$ is a twisted quasi-Hamiltonian $\mathbf{H}\times G$-space, where 
$\mathbf{H}=H_1\times \dots\times H_m$, with $H_i=\GrAut(V^0_{a_i})$.

The formal monodromies $\rho_k\in H(\partial_{a_k})$ determine twisted conjugacy classes $\mathcal{C}(\partial_{a_k})\subset H(\partial_{a_k})$ which do not depend on the choice of direction $d\in \partial_{a_k}$ and the choice of isomorphism $V^0_{d}\simeq \C^{\sum_j n^{(k)}_i \beta^{(k)}_i}$; where the $n_i^{(k)}$ denote the multiplicities of the active circles at $a_i$, and $\beta_i^{(k)}$ their ramifications (the tame circle is included). The wild character variety is the twisted quasi-Hamiltonian reduction of $\Hom_{\mathbb{S}}(E,\nabla)$ at the twisted conjugacy class $\bm{\mathcal{C}}:=\prod_{k=1}^m \mathcal{C}(\partial_{a_k})$, i.e. 
\begin{equation}
\mathcal{M}_B(V)=\Hom_{\mathbb{S}}(E,\nabla)\sslash_{\bm{\mathcal{C}}}  \mathbf{H}.
\end{equation}

This description enables us to compute the dimension of $\mathcal{M}_B(E,\nabla)$. One has 
\begin{equation}
\dim \Acal(V^0_{a_k})=\dim G + \dim H(\partial_{a_k}) + \dim \prod_{d\in \A_{a_k}}\Sto_d.
\label{dimension_A}
\end{equation}
The dimension of $G$ is $\dim G=n^2$. One has $\dim H(\partial_{a_k})=\sum_i \beta^{(k)}_i {n^{(k)}_i}^2$. The dimension of the product of the Stokes groups can be expressed as a function of the number of Stokes arrows between the active circles (see \cite{boalch2014geometry}):

\begin{equation}
\dim \prod_{d\in \A_{a_k}}\Sto_d =\sum_{1\leq i,j \leq r_k} n^{(k)}_i n^{(k)}_j B^+_{i,j},
\end{equation}
where $B^+_{i,j}$ denotes the number of (positive) Stokes arrows between $\cir{q^{(k)}_i}$ and $\cir{q^{(k)}_j}$ in the Stokes diagram corresponding to the formal local system $V^0_{a_i}\to \partial_{a_i}$. 

The dimension of $\Hom_{\mathbb{S}}(V)$ is 
\[ \dim \Hom_{\mathbb{S}}(V)=\sum_{i=1}^m \dim \Acal(V^0_{a_k}) -2\dim G + 2\dim Z(G).,
\]
and the dimension of the wild character variety is then, taking into account the symplectic reduction at the conjugacy classes $\mathcal{C}(\partial_{a_k})$ is given by
\begin{equation}
\dim \mathcal{M}_B(V)= \sum_{k=1}^m \left(\dim \Acal(V^0_{a_k}) + \dim \mathcal{C}(\partial_{a_k}) -2\dim H(\partial_{a_k})\right) -2\dim G + 2\dim Z(G).
\label{dimension_M_B}
\end{equation}

We now are in position to compute the quantities $D:=2-(\mathbf{d},\mathbf{d})$ and 
$D':=\dim \mathcal{M}_B(V)$. Let us introduce the following notations:
\begin{itemize}
\item Let $a_1,\dots, a_m$ be the singularities at finite distance. 
\item For $k=1,\dots,m$, let $\cir{q^{(k)}_1}_{a_k},\dots \cir{q^{(k)}_{r_k}}_{a_k}\in \pi_0(\Ical_{a_k})$ denote the irregular active circles at $a_k$, $n^{(k)}_i\in \N$ their respective multiplicities, and $\beta^{(k)}_j$ their respective ramification orders, and $\alpha^{(k)}_j$ their irregularities, in an order such that the slopes satisfy
$$
\frac{\alpha^{(k)}_1}{\beta^{(k)}_1} \geq \dots \geq \frac{\alpha^{(k)}_{r_k}}{\beta^{(k)}_{r_k}}.
$$
\item For $k=1,\dots,m$, let $m_k$ be the multiplicity of the tame circle $\cir{0}_{a_k}$ in the modified formal local system at $a_k$.
\item At infinity, let $\cir{q^{(\infty)}_1}_\infty,\dots \cir{q^{(\infty)}_{r_\infty}}_\infty\in \pi_0(\Ical_{a_k})$ denote the active circles (the tame circle is included), $\beta^{(\infty)}_j$ their respective ramification orders, and $\alpha^{(\infty)}_j$ their irregularities, in an order such that the slopes satisfy
$$
\frac{\alpha^{(\infty)}_1}{\beta^{(\infty)}_1} \geq \dots \geq \frac{\alpha^{(\infty)}_{r_\infty}}{\beta^{(\infty)}_{r_\infty}}.
$$
\end{itemize}

Here we will assume that all Stokes arrows come from the leading terms in the exponential factors. In the language of the previous section, this means that the different exponential factors have no common part. This entails no loss of generality, indeed in the computation of the number of Stokes arrows we have seen that the terms involving subleading terms do not change under Fourier transform, so that those term will give the same contribution to $2-(\mathbf{d},\mathbf{d})$ and 
$\dim \mathcal{M}_B(\bm\Theta,\bm{\Ccal})$. 

Let us list the number of  loops and edges between the different types of circles. From the formulas for the number of loops and edges, and the formulas the the transformation of slopes under Fourier transform, we get
\begin{itemize}
\item Loops at circles at infinity: $B_{\cir{q^{(\infty)}_i},\cir{q^{(\infty)}_i}}=(\beta^{(\infty)}_i-1)(\alpha^{(\infty)}_i-\beta^{(\infty)}_i-1)$.
\item Edges between different circles at infinity: $B_{\cir{q^{(\infty)}_i},\cir{q^{(\infty)}_j}}=\beta^{(\infty)}_j(\alpha^{(\infty)}_i-\beta^{(\infty)}_i)$ if $i<j$. 
\item Loops at irregular circles at finite distance: $B_{\cir{q^{(k)}_i},\cir{q^{(k)}_i}}=(-\beta^{(k)}_i-1)(\alpha^{(k)}_i+\beta^{(k)}_i-1)$.
\item Edges between different irregular circles at a same pole at finite distance: $B_{\cir{q^{(k)}_i},\cir{q^{(k)}_j}}=-\beta^{(k)}_i(\alpha^{(k)}_j+\beta^{(k)}_j)$ if $i<j$. 
\item Edges between the tame circle $\cir{0}_{a_k}$ and an irregular circle at $a_k$: $B_{\cir{0}_{a_k},\cir{q^{(k)}_i}}=-\beta^{(k)}_i$.
\item Edges between circles at two different poles at finite distance: they are none, i.e. $B_{\cir{q^{(k)}_i},\cir{q^{(l)}_j}}=0$
\item Edges between an irregular circle at finite distance and a circle at infinity: $B_{\cir{q^{(k)}_i},\cir{q^{(\infty)}_j}}=\beta^{(\infty)}_j(\alpha^{(k)}_i+\beta^{(k)}_i)$.
\item Edges between the tame circle $\cir{0}_{a_k}$ and a circle at infinity: $B_{\cir{0}_{a_k},\cir{q^{\infty}_i}}=\beta^{(\infty)}_j$. 
\end{itemize}

For each circle $I$, let $D_{\Lbb_{I}}$ be the contribution to $2-(\mathbf{d},\mathbf{d})$ of the leg $\Lbb_{I}$ associated to $I$. Putting everything together, this gives us the following lengthy expression for $D$: 
\begin{equation}
\begin{split}
2-(\mathbf{d},\mathbf{d})=& 2+\sum_{i^{(\infty)}}\left( -2+(\beta^{(\infty)}_i-1)(\alpha^{(\infty)}_i-\beta^{(\infty)}_i-1)\right){n^{(\infty)}_i}^2 \\
 & +\sum_{i^{(\infty)}<j^{(\infty)}} 2\beta^{(\infty)}_j(\alpha^{(\infty)}_i-\beta^{(\infty)}_i)n^{(\infty)}_i n^{(\infty)}_j
 + \sum_k \sum_{i^{(k)}}\left( -2+(-\beta^{(k)}_i-1)(\alpha^{(k)}_i+\beta^{(k)}_i-1)\right){n^{(k)}_i}^2\\
 &+\sum_k\sum_{i^{(k)}<j^{(k)}} -2\beta^{(k)}_i(\alpha^{(k)}_j+\beta^{(k)}_j)n^{(k)}_i n^{(k)}_j+ \sum_k \sum_{i^{(k)}}-2\beta^{(k)}_i m_k n^{(k)}_i
 \\
& + \sum_k\sum_{i^{(k)}<j^{(\infty)}} 2\beta^{(\infty)}_j(\alpha^{(k)}_i+\beta^{(k)}_i)n^{(k)}_i n^{(\infty)}_j
 + \sum_k \sum_{j^{(\infty)}}2\beta^{(\infty)}_j m_k n^{(\infty)}_j -2\sum_k m_k^2 \\&+\sum_{i^{(\infty)}} D(\Lbb_{\cir{q_{i^{(\infty)}}}}) + \sum_k \sum_{i^{(k)}} D(\Lbb_{\cir{q^{(k)}_i}})+ \sum_k D(\Lbb_{\cir{0}_{a_k}}).
\end{split}
\end{equation}

To compute the dimension of the wild character variety, we have to find the number of (positive) Stokes arrows at each singularity. We have 
\begin{itemize}
\item Number of Stokes arrows between two irregular circles $\cir{q^{(k)}_i}$, $\cir{q^{(k)}_i}$ with $i<j$ at $a_k$: $\beta^{(k)}_j \alpha^{(k)}_i$
\item Number of Stokes loops at the irregular circle $\cir{q^{(k)}_i}$: $\alpha^{(k)}_i(\beta^{(k)}_i-1)$.
\item Number of Stokes arrows between the irregular circle $\cir{q^{(k)}_i}$ and the tame circle $\cir{0}_{a_k}$ at $a_k$: $ \alpha^{(k)}_i$.
\end{itemize}
We also have the similar numbers of arrows at infinity.

Let $n_k$ be the multiplicity of the tame circle $\cir{0}_{a_k}$ in the non-modifed local system $V^0_{a_k}$, i.e. the rank of the regular part of the connection at $a_k$. The dimension of the torsor $H(\partial_{a_k})$ is 
\[ 
\dim H(\partial_{a_k})=\sum_{i^{(k)}} {n^{(k)}_i}^2 \beta^{(k)}_i +{n_k}^2.
\]
Therefore, we get from \eqref{dimension_A} the following expression for $\dim \Acal(V_{a_k}^0)$:
\begin{equation}
\dim \Acal(V_{a_k}^0)=n^2+\sum_{i^{(k)}} ({n^{(k)}_i}^2 \beta^{(k)}_i +{n_k}^2) + \sum_{i^{(k)}} \alpha^{(k)}_i(\beta^{(k)}_i-1){n^{(k)}_i}^2 + \sum_{i^{(k)}<j^{(k)}} 2\beta^{(k)}_j \alpha^{(k)}_i n^{(k)}_i n^{(k)}_j +\sum_{i^{(k)}} 2\alpha^{(k)}_i n^{(k)}_i n_k.
\end{equation}
Let $\Ccal_i^{(k)}$ be the twisted conjugacy class of the monodromy of the irregular circle $\cir{q^{(k)}_i}$, and $\Ccal_k^{\text{reg}}$ the conjugacy class of the monodromy of the tame circle in $V^0_{a_k}$, so that 
\[ \Ccal(\partial_{a_k})\simeq \prod_{i^{(k)}} \Ccal_i^{(k)}\times \Ccal_k^{\text{reg}} \subset H(\partial_{a_k}).
\]
Using $\dim Z(G)=1$, the formula \eqref{dimension_M_B} for the dimension of the wild character variety then becomes
\begin{equation}
\begin{split}
\dim \mathcal{M}_B(V)=&  2-2n^2 \\
&+ \sum_k \left( n^2 -\sum_{i^{(k)}}({n^{(k)}_i}^2 \beta^{(k)}_i +{n_k}^2) + \sum_{i^{(k)}} \alpha^{(k)}_i(\beta^{(k)}_i-1){n^{(k)}_i}^2 \right. \\& \left. + \sum_{i^{(k)}<j^{(k)}} 2\beta^{(k)}_j \alpha^{(k)}_i n^{(k)}_i n^{(k)}_j +\sum_{i^{(k)}} 2\alpha^{(k)}_i n^{(k)}_i n_k \right)\\
&+  n^2 -\sum_{i^{(\infty)}}{n^{(\infty)}_i}^2 \beta^{(\infty)}_i + \sum_{i^{(\infty)}} \alpha^{(\infty)}_i(\beta^{(\infty)}_i-1){n^{(\infty)}_i}^2 \\&  + \sum_{i^{(\infty)}<j^{(\infty)}} 2\beta^{(\infty)}_j \alpha^{(\infty)}_i n^{(\infty)}_i n^{(\infty)}_j \\
&+  \sum_k \left(\sum_{i^{(k)}} \dim \Ccal_i^{(k)} + \dim \Ccal_k^{\text{reg}}\right) + \sum_{i^{(\infty)}} \dim \Ccal_i^{(\infty)} + \dim \Ccal_{\infty}^{\text{reg}}
\end{split}
\end{equation}

We have to compare these two expressions. First, we relate the contributions of the legs to the dimension of the conjugacy classes. 

\subsection*{Dimensions of conjugacy classes} We have to deal with a subtlety: the leg $\Lbb_I$ which we glue to a circle $I$ of ramification order $\beta$ over the pole $a\in \Pbb^1$ of the core of the diagram corresponds to the conjugacy class of the monodromy of the local system $V_I^0\to I$, whereas the twisted conjugacy class $\Ccal(I)\subset H(I)$ is the one of the monodromy of $V_I^0\to \partial_a$, i.e. the local system downstairs on the circle of directions $\partial_a$. 
\[
\begin{tikzcd}
V_I^0 \arrow[r,"\phi"]  \arrow[rd,"\psi"] & I \arrow[d,"\pi"] \\
 & \partial_a 
\end{tikzcd}
\]
Let us fix a direction $d\in \partial_a$, let $d_0,\dots d_{\beta_1}$ be its preimages by the cover $\pi : I\to\partial_a$. The fibre of $V_I^0$ over $d$ is the direct sum 
\[ {V_{I,d}^0}= {V_{I,d_0}^0}\oplus \dots \oplus {V_{I,d_{\beta-1},}^0} 
\]
where ${V_{I,d_i}^0}$ is the fibre of $\phi: V_I^0\to I$ over $d_i$, and the indices $i$ are in $\Z/\beta \Z$. 
The monodromy of $V_I^0\to \partial_a$ is given by the collection of linear applications 
\[ \rho_i : V_{I,d_i}^0 \to V_{I,d_{i+1}}^0, \qquad i=0,\dots,\beta-1.
\]
The monodromy of $V_I^0\to I$ is then given by the product
\[ \rho=\rho_{\beta-1}\dots\rho_0 : {V_{I,d_0}^0}\to {V_{I,d_0}^0}
\]

The group $H_I:=\GL(V_{I,d_0}^0)\times \dots \times \GL(V_{I,d_{\beta-1}}^0)$ acts on the monodromy by 
\[ (k_0,\dots,k_{\beta-1}).\rho_i = k_{i+1}\rho_i k_i^{-1},
\]
where $\mathbf{k}=(k_0,\dots,k_{\beta-1})\in H_{I}$. The twisted conjugacy class $\Ccal_I$ is the orbit of $(\rho_0,\dots,\rho_{\beta-1})$ under this action. This induces on $\rho$ the transformation
\[ \mathbf{k}.\rho=k_0\rho k_0^{-1}.
\]
which is just the action of $\GL(V_{I,d_0}^0)$ by conjugation, so that its orbit, which we will denote $\widetilde{\Ccal}_I$, is the conjugacy class of the monodromy of $V_I^0\to I$. Furthermore, each fibre of the map 
\begin{align*}
 \Ccal_I &\to \widetilde{\Ccal}_I\\
  (\rho_i) &\mapsto \rho
\end{align*}
is isomorphic to $\GL(V_{I,d_1}^0)\times \dots \times \GL(V_{I,d_{\beta-1}}^0)$. 
It follows from this that the dimension of $\Ccal_I$ is 
\begin{equation}
\dim \Ccal_I =n^2_I (\beta-1) +\dim \widetilde{\Ccal}_I,
\label{link_reduced_conjugacy_class}
\end{equation}
where $n_I$ is the dimension of each fibre  ${V_{I,d_i}^0}$.

The legs of the diagram correspond to the reduced conjugacy classes $\widetilde{\Ccal}_I$. The dimension of a conjugacy class is obtained from the associated leg in the following way (see again \cite{boalch2012simply}):

\begin{lemma}
Let $\mathcal{C}\subset \GL_n(\C)$ a conjugacy class, $\mathbb{L}$ the associated leg (after choosing a marking), with $\mathbf{d}$ its dimension vector. The dimension of $\mathcal{C}$ is given by
\begin{equation}
\dim \mathcal{C}=2 n^2-(\mathbf{d},\mathbf{d}).
\end{equation}
\end{lemma}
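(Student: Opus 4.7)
The plan is a direct combinatorial verification, in the spirit of Crawley-Boevey's dimension computations, matching the Cartan quadratic form of the leg against the classical expression for $\dim\Ccal$ in terms of the Jordan type of $A\in\Ccal$.

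First I would unpack the combinatorics of the leg $\Lbb$ coming from a marking of $\Ccal$. If $A\in\Ccal$ has distinct eigenvalues $q_1,\ldots,q_r$ and at $q_i$ has Jordan type given by a partition $\lambda^{(i)}$ of the generalized eigenspace dimension $m_i$ (so $\sum_i m_i = n$), then $\Lbb$ consists of the central vertex of dimension $n$ glued to the core, together with branches indexed by the eigenvalues; along the $i$-th branch the dimension vector reads $d^{(i)}_0 = n \geq d^{(i)}_1 \geq \cdots \geq d^{(i)}_{k_i}$ with consecutive differences $d^{(i)}_{j-1}-d^{(i)}_j = \mu^{(i)}_j$ equal to the $j$-th part of the conjugate partition $(\lambda^{(i)})'$. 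Concretely $d^{(i)}_j = \mathrm{rank}(A-q_iI)^j$ and $\mu^{(i)}_j$ counts the Jordan blocks of size $\geq j$ at $q_i$.

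Next, I would evaluate $(\mathbf{d},\mathbf{d})$ branch by branch using the telescoping identity
\[ 2d_{j-1}d_j \;=\; d_{j-1}^2 + d_j^2 - (d_{j-1}-d_j)^2. \]
Summing along the $i$-th branch collapses all the interior $d^{(i)}_j$-terms and leaves a closed expression involving only $n$, the terminal dimension $d^{(i)}_{k_i}=n-m_i$, and $\sum_j(\mu^{(i)}_j)^2$. Adding the $2n^2$ contribution from the central vertex and summing over $i$ then yields $(\mathbf{d},\mathbf{d})$ purely in terms of the $m_i$ and the partition sums of squares $\sum_j(\mu^{(i)}_j)^2$.

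Finally, I would invoke the classical identity $\dim Z_{GL_n}(A) = \sum_{i} \sum_{j} (\mu^{(i)}_j)^2$, which follows block-by-block since the centralizer splits over generalized eigenspaces and for a single nilpotent of Jordan type $\lambda$ one has $\sum_j (\mu_j)^2 = \sum_j (2j-1)\lambda_j = \dim Z(N_\lambda)$. Combined with $\dim\Ccal = n^2 - \dim Z(A)$ and the constraint $\sum_i m_i = n$, a short rearrangement produces the desired identity $\dim\Ccal = 2n^2 - (\mathbf{d},\mathbf{d})$. The main obstacle is purely bookkeeping: one must keep straight the indexing conventions between the partition $\lambda^{(i)}$, its conjugate $\mu^{(i)}$, and the dimension sequence $d^{(i)}_j$ along each branch, so that each term of the Cartan form is correctly identified with its counterpart in the centralizer formula; once this is pinned down the identity is a routine reindexing.
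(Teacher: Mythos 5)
The key step that fails is your combinatorial model of the leg. In the convention the paper relies on (def.\ 9.2 of \cite{boalch2016global}, and \cite{boalch2012simply} to which the lemma is attributed), a marking of $\Ccal$ is a single sequence $\xi_1,\dots,\xi_w$ with $(A-\xi_1)\cdots(A-\xi_w)=0$, and the leg is \emph{one} type-$A$ chain with dimensions $d_j=\operatorname{rank}\bigl((A-\xi_1)\cdots(A-\xi_j)\bigr)$, the head $d_0=n$ being the node glued to the core. It is not a star with one branch per distinct eigenvalue emanating from a central node of dimension $n$, with $d^{(i)}_j=\operatorname{rank}\bigl((A-q_iI)^j\bigr)$ along the $i$-th branch, as you describe. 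The difference is not cosmetic: your $i$-th branch telescopes from $n$ down to $n-m_i$ rather than to $0$, and carrying out exactly the computation you outline for the star yields
\[
2n^2-(\mathbf{d},\mathbf{d})_{\mathrm{star}} \;=\; 2n^2-\sum_i m_i^2-\dim Z(A)\;=\;\dim\Ccal+2\sum_{i<i'}m_i m_{i'},
\]
which overshoots whenever $A$ has at least two distinct eigenvalues; the constraint $\sum_i m_i=n$ does not force $\sum_i m_i^2=n^2$, so the ``short rearrangement'' at the end cannot close. A concrete check: for a regular semisimple class in $GL_2(\C)$ your graph is the star $1-2-1$, giving $2n^2-(\mathbf{d},\mathbf{d})=4$, whereas $\dim\Ccal=2$; the correct leg is the chain $2-1$ coming from the marking $(\xi_1,\xi_2)=$ the two eigenvalues, which gives $2$.

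The rest of your plan is sound and, once the leg is corrected, is essentially the standard argument (the paper itself gives no proof of this lemma, deferring to \cite{boalch2012simply}). For the single chain with $d_0=n$ and $d_w=0$, your telescoping identity $2d_{j-1}d_j=d_{j-1}^2+d_j^2-\mu_j^2$ collapses the Cartan form to $(\mathbf{d},\mathbf{d})=n^2+\sum_j\mu_j^2$, and for a special marking grouping equal eigenvalues the increments $\mu_j$ are precisely the parts of the conjugate partitions $(\lambda^{(i)})'$, so $\sum_j\mu_j^2=\dim Z_{GL_n}(A)$ and $2n^2-(\mathbf{d},\mathbf{d})=n^2-\dim Z_{GL_n}(A)=\dim\Ccal$, exactly as you intended.
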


We can now compare the different terms appearing in the quantities $D=2-(\mathbf{d},\mathbf{d})$ and $D'=\dim \MB(V)$. Let us consider the terms in $D$ involving the multiplicities $m_k$ of the tame circles: their contribution to $D$ is 
\begin{align*}
&\sum_k \sum_{i^{(k)}}-2\beta^{(k)}_i m_k n^{(k)}_i
 + \sum_k \sum_{j^{(\infty)}}2\beta^{(\infty)}_j m_k n^{(\infty)}_j -2\sum_k m_k^2 \\
 =&-2 \sum_k n_k(n-n_k) -2 m_k^2 + 2 \sum_k m_k n\\
 =&\sum_k 2m_k n_k -2 m_k^2,
\end{align*}
where we use that $n=n_k+ \sum_{i^{(k)}} \beta^{(k)}_i n^{(k)}_i$ for each $k$, and the similar formula at infinity. We can recognize that $2m_k n_k -2 m_k^2$ is exactly the contribution of the first edge of the leg associated to a special marking of the conjugacy class of the monodromy of the tame circle $\cir{0}_k$; i.e. the edge corresponding to the eigenvalue $1$. From this we deduce that 
\begin{equation}
D(\Lbb_{\cir{0}_{a_k}})+ 2m_k n_k -2 m_k^2 = \dim \Ccal_k^{\text{reg}}.
\end{equation} 
We also have for each irregular circle at finite distance
\[
D(\Lbb_{\cir{q^{(k)}_i}}) =\dim \widetilde{\Ccal}_i^{(k)},
\]
and similarly for the circles at infinity
\[
D(\Lbb_{\cir{q^{(k)}_i}}) =\dim \widetilde{\Ccal}_i^{(k)}.
\]
This already gives us equalities between some terms appearing in $D$ and $D'$. Let us deal with the remaining terms. First, for each pole $a_k$ at finite distance, we have an equality between the following term in $D$, which is the contribution of the irregular circles at $a_k$
\[
\begin{split}
& \sum_{i^{(k)}}\left( -2+(-\beta^{(k)}_i-1)(\alpha^{(k)}_i+\beta^{(k)}_i-1)\right){n^{(k)}_i}^2
 +\sum_{i^{(k)}<j^{(k)}} -2\beta^{(k)}_j(\alpha^{(k)}_i+\beta^{(k)}_i)n^{(k)}_i n^{(k)}_j\\
&+ \sum_{i^{(k)},j^{(\infty)}} 2\beta^{(\infty)}_j(\alpha^{(k)}_i+\beta^{(k)}_i)n^{(k)}_i n^{(\infty)}_j,
\end{split}
\]
and the following term in $D'$
\[
\begin{split}
& n^2 -\sum_{i^{(k)}}({n^{(k)}_i}^2 \beta^{(k)}_i +{n_k}^2) + \sum_{i^{(k)}} \alpha^{(k)}_i(\beta^{(k)}_i-1){n^{(k)}_i}^2 \\& + \sum_{i^{(k)}<j^{(k)}} 2\beta^{(k)}_j \alpha^{(k)}_i n^{(k)}_i n^{(k)}_j +\sum_{i^{(k)}} 2\alpha^{(k)}_i n^{(k)}_i n_k  + {n_i^{(k)}}^2(\beta^{(k)}_i-1),
\end{split}
\]
where the term ${n_i^{(k)}}^2(\beta^{(k)}_i-1)$ comes from equation \eqref{link_reduced_conjugacy_class}. 
To obtain this equality, we notice that since $n= \sum_{j^{(\infty)}}\beta^{(\infty)}_j n^{(\infty)}_j$,
\[\sum_{i^{(k)},j^{(\infty)}} 2\beta^{(\infty)}_j(\alpha^{(k)}_i+\beta^{(k)}_i)n^{(k)}_i n^{(\infty)}_j = n \sum_{i^{(k)},j^{(\infty)}} (\alpha^{(k)}_i+\beta^{(k)}_i)n^{(k)}_i, 
\]
then we decompose on the both sides the total rank: $n=\sum_{i^{(k)}} \beta^{(k)}_i n^{(k)}_i+n_k$. It is then a somewhat tedious but straightforward check that we have the same terms on both sides. 

It only remains to consider the contributions to $D$ and $D'$ of the circles at infinity: we have an equality between the following term in $D$ 
\[ \sum_{i^{(\infty)}}\left( -2+(\beta^{(\infty)}_i-1)(\alpha^{(\infty)}_i-\beta^{(\infty)}_i-1)\right){n^{(\infty)}_i}^2 
 +\sum_{i^{(\infty)}<j^{(\infty)}} 2\beta^{(\infty)}_j(\alpha^{(\infty)}_i-\beta^{(\infty)}_i)n^{(\infty)}_i n^{(\infty)}_j
\]
and the following term in $D'$: 
\[
\begin{split}
& -2n^2 + n^2 -\sum_{i^{(\infty)}}{n^{(\infty)}_i}^2 \beta^{(\infty)}_i + \sum_{i^{(\infty)}} \alpha^{(\infty)}_i(\beta^{(\infty)}_i-1){n^{(\infty)}_i}^2 \\&  + \sum_{i^{(\infty)}<j^{(\infty)}} 2\beta^{(\infty)}_j \alpha^{(\infty)}_i n^{(\infty)}_i n^{(\infty)}_j +\sum_{i^{(\infty)}} {n^{(\infty)}_i}^2(\beta^{(\infty)}_i-1),
\end{split}
\]
where once again the term ${n^{(\infty)}_i}^2(\beta^{(\infty)}_i-1)$ comes from \eqref{link_reduced_conjugacy_class}. These equalities exhaust all terms appearing in $D$ and $D'$, so this concludes the proof.


\begin{thebibliography}{10}

\bibitem{arinkin2008fourier}
D.~Arinkin.
\newblock Fourier transform and middle convolution for irregular {D}-modules.
\newblock {\em arXiv preprint arXiv:0808.0699}, 2008.

\bibitem{arinkin2010rigid}
D.~Arinkin.
\newblock Rigid irregular connections on $\mathbb{P}^1$.
\newblock {\em Compositio Mathematica}, 146(5):1323--1338, 2010.

\bibitem{balandin1998painleve}
S.~P. Balandin and V.~V. Sokolov.
\newblock On the {P}ainlev\'{e} test for non-abelian equations.
\newblock {\em Phys. Lett. A}, 246(3-4):267--272, 1998.

\bibitem{biquard1991fibres}
O.~Biquard.
\newblock Fibr{\'e}s paraboliques stables et connexions singulieres plates.
\newblock {\em Bulletin de la Soci{\'e}t{\'e} Math{\'e}matique de France},
  119(2):231--257, 1991.

\bibitem{biquard2004wild}
O.~Biquard and P.~Boalch.
\newblock Wild non-abelian {H}odge theory on curves.
\newblock {\em Compositio Mathematica}, 140(1):179--204, 2004.

\bibitem{boalch2001symplectic}
P.~Boalch.
\newblock Symplectic manifolds and isomonodromic deformations.
\newblock {\em Advances in Mathematics}, 163(2):137--205, 2001.

\bibitem{boalch2007quasi}
P.~Boalch.
\newblock Quasi-{H}amiltonian geometry of meromorphic connections.
\newblock {\em Duke Mathematical Journal}, 139(2):369--405, 2007.

\bibitem{boalch2008irregular}
P.~Boalch.
\newblock Irregular connections and {K}ac-{M}oody root systems.
\newblock {\em arXiv preprint arXiv:0806.1050}, 2008.

\bibitem{boalch2009quivers}
P.~Boalch.
\newblock Quivers and {D}ifference {P}ainleve {E}quations.
\newblock volume~47, page~25. American Mathematical Soc., 2009.

\bibitem{boalch2009through}
P.~Boalch.
\newblock Through the analytic halo: Fission via irregular singularities.
\newblock In {\em Annales de l'Institut Fourier}, volume~59, pages 2669--2684,
  2009.

\bibitem{boalch2012simply}
P.~Boalch.
\newblock Simply-laced isomonodromy systems.
\newblock {\em Publications math{\'e}matiques de l'IH{\'E}S}, 116(1):1--68,
  2012.

\bibitem{boalch2014geometry}
P.~Boalch.
\newblock Geometry and braiding of {S}tokes data; fission and wild character
  varieties.
\newblock {\em Annals of Mathematics}, pages 301--365, 2014.

\bibitem{boalch2016global}
P.~Boalch.
\newblock Global {W}eyl groups and a new theory of multiplicative quiver
  varieties.
\newblock {\em Geometry \& Topology}, 19(6):3467--3536, 2016.

\bibitem{boalch2018wild}
P.~Boalch.
\newblock Wild character varieties, meromorphic {H}itchin systems and {D}ynkin
  diagrams.
\newblock {\em Geometry and Physics: Volume 2: A Festschrift in Honour of Nigel
  Hitchin}, 2:433, 2018.

\bibitem{boalch2015twisted}
P.~Boalch and D.~Yamakawa.
\newblock Twisted wild character varieties.
\newblock {\em arXiv preprint arXiv:1512.08091}, 2015.

\bibitem{boalch2020diagrams}
P.~Boalch and D.~Yamakawa.
\newblock Diagrams for nonabelian {H}odge spaces on the affine line.
\newblock {\em Comptes Rendus. Math{\'e}matique}, 358(1):59--65, 2020.

\bibitem{boalch2021topology}
P.~P. Boalch.
\newblock Topology of the {S}tokes phenomenon.
\newblock In {\em Integrability, {Q}uantization, and {G}eometry. {I}}, volume
  103 of {\em Proc. Sympos. Pure Math.}, pages 55--100. Amer. Math. Soc., 2021.
\newblock arXiv:1903.12612.

\bibitem{crawley2003matrices}
W.~Crawley-Boevey.
\newblock On matrices in prescribed conjugacy classes with no common invariant
  subspace and sum zero.
\newblock {\em Duke Mathematical Journal}, 118(2):339--352, 2003.

\bibitem{crawley2006multiplicative}
W.~Crawley-Boevey and P.~Shaw.
\newblock Multiplicative preprojective algebras, middle convolution and the
  {D}eligne--{S}impson problem.
\newblock {\em Advances in Mathematics}, 201(1):180--208, 2006.

\bibitem{deligne1970equations}
P.~Deligne.
\newblock {\'E}quations diff{\'e}rentielles {\`a} points singuliers
  r{\'e}guliers.
\newblock {\em Lecture Notes in Math}, 163, 1970.

\bibitem{deligne1978letter}
P.~Deligne.
\newblock Letter to {B}. {M}algrange.
\newblock 1978.

\bibitem{fang2009calculation}
J.~Fang.
\newblock Calculation of local {F}ourier transforms for formal connections.
\newblock {\em Science in China Series A: Mathematics}, 52(10):2195--2206,
  2009.

\bibitem{flaschla1980monodromy}
H.~Flaschka and A.~C. Newell.
\newblock Monodromy- and spectrum-preserving deformations. {I}.
\newblock {\em Comm. Math. Phys.}, 76(1):65--116, 1980.

\bibitem{graham2013calculation}
A.~Graham-Squire.
\newblock Calculation of local formal {F}ourier transforms.
\newblock {\em Arkiv f{\"o}r Matematik}, 51(1):71--84, 2013.

\bibitem{hiroe2017linear}
K.~Hiroe.
\newblock Linear differential equations on the {R}iemann sphere and
  representations of quivers.
\newblock {\em Duke Mathematical Journal}, 166(5):855--935, 2017.

\bibitem{hiroe2013classification}
K.~Hiroe and T.~Oshima.
\newblock A classification of roots of symmetric {K}ac-{M}oody root systems and
  its application.
\newblock In {\em Symmetries, integrable systems and representations}, pages
  195--241. Springer, 2013.

\bibitem{hiroe2014moduli}
K.~Hiroe and D.~Yamakawa.
\newblock Moduli spaces of meromorphic connections and quiver varieties.
\newblock {\em Advances in Mathematics}, 266:120--151, 2014.

\bibitem{joshi2007linearization}
N.~Joshi, A.~Kitaev, and P.~Treharne.
\newblock On the linearization of the {P}ainlev{\'e} {III}--{VI} equations and
  reductions of the three-wave resonant system.
\newblock {\em Journal of mathematical physics}, 48(10):103512, 2007.

\bibitem{kawakami2018degeneration}
H.~Kawakami, A.~Nakamura, and H.~Sakai.
\newblock Degeneration scheme of 4-dimensional {P}ainlev\'{e}-type equations.
\newblock In {\em 4-dimensional {P}ainlev\'{e}-type equations}, volume~37 of
  {\em MSJ Mem.}, pages 25--111. Math. Soc. Japan, Tokyo, 2018.

\bibitem{konno1993construction}
H.~Konno.
\newblock Construction of the moduli space of stable parabolic {H}iggs bundles
  on a {R}iemann surface.
\newblock {\em Journal of the Mathematical Society of Japan}, 45(2):253--276,
  1993.

\bibitem{malgrange1979remarques}
B.~Malgrange.
\newblock Remarques sur les \'{e}quations diff\'{e}rentielles \`a points
  singuliers irr\'{e}guliers.
\newblock In {\em \'{E}quations diff\'{e}rentielles et syst\`emes de {P}faff
  dans le champ complexe ({S}em., {I}nst. {R}ech. {M}ath. {A}vanc\'{e}e,
  {S}trasbourg, 1975)}, volume 712 of {\em Lecture Notes in Math.}, pages
  77--86. Springer, Berlin, 1979.

\bibitem{malgrange1982classification}
B.~Malgrange.
\newblock La classification des connexions irr\'{e}guli\`eres \`a une variable.
\newblock In {\em Mathematics and physics ({P}aris, 1979/1982)}, volume~37 of
  {\em Progr. Math.}, pages 381--399. Birkh\"{a}user Boston, Boston, MA, 1983.

\bibitem{malgrange1991equations}
B.~Malgrange.
\newblock {\em Equations diff{\'e}rentielles {\`a} coefficients polynomiaux}.
\newblock Progress in mathematics. Birkh{\"a}user, 1991.

\bibitem{nakajima1996hyperkahler}
H.~Nakajima.
\newblock Hyper-{K}\"{a}hler structures on moduli spaces of parabolic {H}iggs
  bundles on {R}iemann surfaces.
\newblock In {\em Moduli of vector bundles ({S}anda, 1994; {K}yoto, 1994)},
  volume 179 of {\em Lecture Notes in Pure and Appl. Math.}, pages 199--208.
  Dekker, New York, 1996.

\bibitem{ohyama2006studiesV}
Y.~Ohyama, H.~Kawamuko, H.~Sakai, and K.~Okamoto.
\newblock Studies on the {P}ainlev{\'e} equations. {V}. {T}hird {P}ainlev{\'e}
  equations of special type {PIII (D7) and PIII (D8)}.
\newblock {\em J. Math. Sci. Univ. Tokyo}, 13(2):145--204, 2006.

\bibitem{okamoto1986studies}
K.~Okamoto.
\newblock Studies on the {P}ainlev{\'e} equations: I. {S}ixth {P}ainlev{\'e}
  equation {PVI}.
\newblock {\em Annali di Matematica pura ed applicata}, 146:337--381, 1986.

\bibitem{okamoto1986studiesIII}
K.~Okamoto.
\newblock Studies on the {P}ainlev{\'e} equations. {III}: {S}econd and fourth
  {P}ainlev{\'e} equations, {PII and PIV}.
\newblock {\em Mathematische Annalen}, 275(2):221--255, 1986.

\bibitem{okamoto1987studiesII}
K.~Okamoto.
\newblock Studies on the {P}ainlev{\'e} equations {II}. {Fifth} {P}ainlev{\'e}
  equation {PV}.
\newblock {\em Japanese journal of mathematics. New series}, 13(1):47--76,
  1987.

\bibitem{okamoto1987studiesIV}
K.~Okamoto.
\newblock Studies on the {P}ainlev{\'e} equations {IV}. {T}hird {P}ainlev{\'e}
  equation {PIII}.
\newblock {\em Funkcial. Ekvac}, 30:305--32, 1987.

\bibitem{okamoto1992painleve}
K.~Okamoto.
\newblock The {P}ainlev{\'e} equations and the {D}ynkin diagrams.
\newblock In {\em Painlev{\'e} transcendents}, pages 299--313. Springer, 1992.

\bibitem{sabbah2008explicit}
C.~Sabbah.
\newblock An explicit stationary phase formula for the local formal
  {F}ourier-{L}aplace transform.
\newblock In {\em Singularities {I}}, volume 474 of {\em Contemp. Math.}, pages
  309--330. Amer. Math. Soc., Providence, RI, 2008.

\bibitem{seiberg1994electric}
N.~Seiberg and E.~Witten.
\newblock Electric-magnetic duality, monopole condensation, and confinement in
  {$N=2$} supersymmetric {Y}ang-{M}ills theory.
\newblock {\em Nuclear Phys. B}, 426(1):19--52, 1994.

\bibitem{seiberg1994monopoles}
N.~Seiberg and E.~Witten.
\newblock Monopoles, duality and chiral symmetry breaking in {$N=2$}
  supersymmetric {QCD}.
\newblock {\em Nuclear Phys. B}, 431(3):484--550, 1994.

\bibitem{sibuya1977stokes}
Y.~Sibuya.
\newblock Stokes phenomena.
\newblock {\em Bull. Amer. Math. Soc.}, 83(5):1075--1077, 1977.

\bibitem{simpson1990harmonic}
C.~T. Simpson.
\newblock Harmonic bundles on noncompact curves.
\newblock {\em Journal of the American Mathematical Society}, 3(3):713--770,
  1990.

\bibitem{xie20213d}
D.~Xie.
\newblock 3d mirror for {A}rgyres-{D}ouglas theories.
\newblock {\em arXiv preprint arXiv:2107.05258}, 2021.

\bibitem{yamakawa2020applications}
D.~Yamakawa.
\newblock Applications of quiver varieties to moduli spaces of connections on
  $\mathbb{P}^1$.
\newblock In {\em Two algebraic byways from differential equations:
  {G}r\"{o}bner bases and quivers}, volume~28 of {\em Algorithms Comput.
  Math.}, pages 325--371. Springer, Cham, [2020] \copyright 2020.

\end{thebibliography}
\end{document}